\newtheorem{theorem}{Theorem}[section]
\newtheorem{lemma}[theorem]{Lemma}
\newtheorem{remark}{Remark}[section]
\newcommand{\opnorm}{\@ifstar\@opnorms\@opnorm}
\newcommand{\@opnorms}[1]{%
  \left|\mkern-1.5mu\left|\mkern-1.5mu\left|
   #1
  \right|\mkern-1.5mu\right|\mkern-1.5mu\right|
}
\newcommand{\@opnorm}[2][]{%
  \mathopen{#1|\mkern-1.5mu#1|\mkern-1.5mu#1|}
  #2
  \mathclose{#1|\mkern-1.5mu#1|\mkern-1.5mu#1|}
}
\newcommand{\matr}[1]{\mathbf{#1}}
\newcommand{\local}{L}
\newcommand{\nonlocal}{N}
\newcommand{\interactionSolve}{{\nonlocal, \mathcal{I}}}
\newcommand{\interactionGiven}{{\nonlocal, \tilde{\mathcal{I}}}} % Determined by BC (e.g. given)
\newcommand{\globalNonlocal}{\mathring{\nonlocal}}
\newcommand{\localBoundSolve}{\Gamma}
\newcommand{\localBoundGiven}{\tilde \Gamma} % Determined by BC (e.g. given)
\newcommand{\is}[1][]{\mathcal{I}_{#1}} % Index set
\newcommand{\mesh}[1][]{\mathcal{T}_{#1}} % Mesh
\newcommand{\lapl}[1][]{\matr A_{#1}} % Laplacian local or nonlocal 
\newcommand{\restr}[1][]{\matr R_{#1}} % restriction matrix
\renewcommand{\tilde}[1]{\widetilde{#1}}
\newcommand{\abs}[1]{\left|#1\right|}
\newcommand{\norm}[1]{\left|\!\left|#1\right|\!\right|}
\renewcommand{\vec}[1]{\boldsymbol{#1}}
\begin{document}

\title{A Splice Method for Local-to--Nonlocal Coupling of Weak Forms}
\author[snl]{Shuai Jiang}
\ead{sjiang@sandia.gov}

\author[snl]{Christian Glusa}
\ead{caglusa@sandia.gov}

\affiliation[snl]{organization={Center for Computing Research, Sandia National Laboratories}, city={Albuquerque}, postcode={87321}, state={NM}, country={USA}}

\begin{abstract}
  We propose a method to couple local and nonlocal diffusion models.
  By inheriting desirable properties such as patch tests, asymptotic compatibility and unintrusiveness from related splice and optimization-based coupling schemes, it enables the use of weak (or variational) formulations, is computationally efficient and straightforward to implement.
  We prove well-posedness of the coupling scheme and demonstrate its properties and effectiveness in a variety of numerical examples.
\end{abstract}

\begin{keyword}
  nonlocal equations, coupling, local-to-nonlocal
\end{keyword}

\maketitle

\section{Introduction}

Nonlocal models are characterized by integral equations with a kernel function that captures long range interactions.
The increased flexibility encapsulated in the kernel allows one to capture effects that classical models using partial differential equations cannot reproduce in general, without the use of multiscale coefficients.
Moreover, nonlocal models allow one to effectively handle low regularity solutions without a need for additional mechanisms such as tracking of discontinuities.
Commonly used nonlocal descriptions are diffusion operators \cite{DuGunzburgerEtAl2012_AnalysisApproximationNonlocalDiffusion} of the form
\begin{align*}
  \mathcal{L}u(\vec{x})=\int (u(\vec y) - u(\vec x)) \gamma(\vec x, \vec y) \, d{y} ,&& \text{with kernel }\gamma \text{ and } \vec{x},\vec{y}\in\mathbb{R}^{d},
\end{align*}
and the peridynamic descriptions of mechanics~\cite{Silling2000_ReformulationElasticityTheoryDiscontinuities}.
Similar to local equations, a wide variety of discretization schemes such as finite elements, finite differences or mesh-free/particle methods are available~\cite{DEliaDuEtAl2020_NumericalMethodsNonlocalFractionalModels}.

\emph{Local-to-Nonlocal} (LtN) coupling aims to combine a nonlocal model posed on a sub-region of the computational domain with a local model that is prescribed on the complement.
There are several compelling reasons for use of a LtN coupling method:
\begin{itemize}
\item
  \textbf{Computational efficiency:}
  Since the assembly and solution of local models is often significantly less expensive in both computational time and in memory usage, using a local model instead of a nonlocal one in part of the simulation domain can lead to appreciable savings.
  This is especially useful when nonlocal effects are known to be more pronounced in sub-regions, e.g. around a fracture in a peridynamic simulation.
  In this scenario, the goal is to recover the solution of a nonlocal equation posed on the entire domain as accurately as possible at a fraction of the computational cost.

\item
  \textbf{Avoidance of nonlocal volume conditions:}
  It is a non-trivial task to derive nonlocal volume conditions that capture the same behavior as classical boundary conditions would for a local model.
  Moreover, it is often not straightforward to decide what data should be prescribed via a volume condition, whereas conditions posed on surfaces are much less problematic.
  The issue can be mitigated by LtN coupling where the nonlocal model is replaced with a local model in a sufficiently thick layer adjacent to the domain boundary.
  Then, instead of imposing volume conditions, the same conventional boundary conditions are imposed as for a classical local description.

\item
  \textbf{Multi-phase or multi-physics problems:}
  Coupling problems involving local and nonlocal models arise when different types of physics govern different parts of the domain.
  Similarly, in settings where local and nonlocal description of the same physics are more appropriate in separate parts of the domain coupling schemes are required.

\end{itemize}

A wide range of LtN coupling methods is available in the literature.
A review of techniques can be found in \cite{d2021review} and the references therein.
LtN methods can be classified based on whether adjustments are made to the kernel function in the transition between local and nonlocal regions.
In particular, several methods have been proposed that rely on adjusting the interaction horizon of the kernel.
On the other hand, and more pertinent for the present work, are methods that employ a fixed kernel function such as splice coupling~\cite{silling2015variable} or optimization-based coupling~\cite{d2016coupling,DEliaBochev2021_FormulationAnalysisComputationOptimization}.
Commonly, the following properties are deemed desirable for LtN coupling methods~\cite{d2021review}:
\begin{itemize}
\item
  \textbf{Patch tests:}
  If a fully nonlocal model and a fully local model have matching solutions, a coupling of the two models should also admit the same solution.

\item
  \textbf{Asymptotic compatibility:}
  Nonlocal operators often recover classical local models in certain parameter limits, such as vanishing interaction horizon~\cite{TianDu2014_AsymptoticallyCompatibleSchemesApplications}.
  It is therefore appropriate to require that the same should also hold for a coupled model.

\item
  \textbf{Energy equivalence:}
  The coupling problem can be equipped with an energy that is equivalent to the energies of fully local and fully nonlocal problems.

\item
  \textbf{Computational cost:}
  In order to realize computational savings in replacing a fully nonlocal models by a LtN coupling, one would like that the computational load of assembling and solving the coupled model is roughly equivalent to the convex combination of the cost of full local and nonlocal models, weighted by the fraction of the computational domain assigned to each.

\item
  \textbf{Unintrusiveness:}
  While it is unavoidable that one has to assemble both local and nonlocal models over sub-domains, it is helpful to keep coupling-specific implementation details to a minimum.
  Ideally, the coupling method should not be difficult to implement and is discretization agnostic.
  Specialized kernel functions, hyper-parameters or weights~\cite{LiLu2017_QuasiNonlocalCouplingNonlocalDiffusions} that need to be carefully tuned can adversely affect the applicability of a coupling scheme.

\end{itemize}

In the present work, we propose a splice method that is closely related to optimization-based coupling.
In contrast to splice methods found in the literature which are tied to mesh-free discretizations of the nonlocal operator in strong form, our method couples weak forms.
This means that the complete analysis framework for local and nonlocal equations is at our disposal as well as the use of finite element discretizations for both local and nonlocal problem.
Additionally, the proposed scheme significantly reduces the computational cost when compared with optimization-based coupling approaches as no iterative minimization procedure is required.
Finally, the method inherits several of the desirable properties listed above from optimization and splice methods.

While we describe the method in the setting of nonlocal diffusion problems, its extension to vector-valued descriptions such as peridynamics is straightforward.

In \Cref{sec:problem-definition} we give a description of nonlocal and local models used in the coupling and their respective discretization using finite elements.
The coupling of the different discretizations is outlines in \Cref{sec:local-nonl-coupl}.
In \Cref{sec:prop-splice-ltn} we give results regarding well-posedness of the coupling and a proof that patch tests are satisfied by the method.
Finally, in \Cref{sec:numerical-results} we show numerical results in 1D and 2D that illustrate the effectiveness of the proposed approach.

\section{Problem Definition}
\label{sec:problem-definition}

\subsection{Continuous Problem}

Let $\Omega \subset \mathbb{R}^d$ be a bounded open domain. 
We will exclusively focus on $d = 1, 2$ for the examples of the paper, though the methods presented generalize to higher dimensions.
Let $\gamma(\vec x, \vec y): \mathbb{R}^d \times \mathbb{R}^d \to \mathbb{R}^d$ be a non-negative, symmetric kernel function which is potentially singular at $\vec x = \vec y$.
We assume that $\gamma$ has horizon $\delta \in(0,\infty)$ meaning that $\gamma(\vec x, \vec y) = 0$ if $\norm{\vec x - \vec y} > \delta$ where $\norm{\cdot}$ is the standard $\ell^2$ norm.
The nonlocal diffusion operator is defined as, for all $\vec x \in \Omega$,
\begin{align}\label{eqn:nonlocal-diff}
    \mathcal{L}_\nonlocal u(\vec x) := \operatorname{p.v.}\int_{\mathbb{R}^d} (u(\vec y) - u(\vec x)) \gamma(\vec x, \vec y) \, d{y}.
\end{align}
Here, $\operatorname{p.v.}$ signifies that the integral might have to be taken in the \emph{principal value} sense, depending on the strength of singularity of the kernel $\gamma$.

In the present work we will consider two families of kernel functions:
\begin{itemize}
\item
fractional kernels with \emph{fractional order} \(s\in(0,1)\)
\begin{align}
  \gamma_{F}(\vec{x},\vec{y})
  &:= C_{F}(d,s,\delta) \norm{\vec{x}-\vec{y}}^{-d-2s}\chi_{\norm{\vec{x}-\vec{y}}<\delta} \label{eq:fracKernel}
    \intertext{where \(\chi\) is the indicator function and}
    C_{F}(d,s,\delta)
  &=
    \begin{cases}
      \frac{2^{2s}s\Gamma(s+d/2)}{\pi^{d/2}\Gamma(1-s)}& \text{if } \delta=\infty, \\
      \frac{(2-2s)\delta^{2s-2}d\Gamma(d/2)}{\pi^{d/2}}& \text{if } \delta<\infty,
    \end{cases}\label{eq:fracKernelNormalization}
\end{align}
\item
  integrable kernels with singularity of strength \(0\leq\alpha<d\), \(\delta<\infty\)
  \begin{align}
    \gamma_{I}(\vec{x},\vec{y};\alpha,\delta)
    &:= C_{I}(d,\alpha,\delta) \norm{\vec{x}-\vec{y}}^{-\alpha}\chi_{\norm{\vec{x}-\vec{y}}<\delta} \label{eq:integrableKernel}
      \intertext{where}
      C_{I}(d,\alpha,\delta) &= \frac{(d+2-\alpha) d\Gamma(d/2)}{\pi^{d/2}\delta^{d+2-\alpha}}.\label{eq:integrableKernelNormalization}
\end{align}
In particular we call the kernel with \(\alpha=0\) the \emph{constant kernel} and the kernel with \(\alpha=1\) the \emph{inverse distance kernel}.
\end{itemize}
It should be noted that the presented methodology has rather weak assumptions on the properties of the kernel function and can therefore readily be generalized to other types of nonlocal problems.

Let $\Omega_{\mathcal{I}}$ be the $\delta$-collar around $\Omega$ where $\gamma$ is non-zero
\begin{align*}
    \Omega_{\mathcal{I}} := \{\vec x \in  \mathbb{R}^{d}\setminus\Omega \mid \exists \vec y \in \Omega \text{ s.t. } \gamma(\vec x, \vec y) \not = 0 \}.
\end{align*}
The domain $\Omega_\mathcal{I}$ serves to support a volumetric boundary conditions (otherwise known as the interaction region) that is typical of nonlocal models.
We define the associated energy norm $\opnorm{u}_{\Omega \cup \Omega_\mathcal{I}}$ where
\begin{align*}
  \opnorm{u}^{2}_{\omega} = \iint_{\omega^{2}} (u(\vec x) - u(\vec y))^2 \gamma(\vec x, \vec y) \, d{y} \, dx,
\end{align*}
with the energy space of $V(\Omega\cup\Omega_\mathcal{I})$ where
\begin{align*}
  V(\omega) :=\{v\in L^{2}(\omega) \mid \opnorm{v}_{\omega} < \infty\}
\end{align*}
for $\omega$ some open set. 

We are interested in the solution of nonlocal Poisson problem of the form
\begin{equation}\label{eqn:nonlocal-model-full}
  \left\{
    \begin{aligned}
    -\mathcal{L}_\nonlocal u(\vec x) &= f(\vec x)  && \vec{x} \in \Omega, \\
                           u(\vec x) &= g(\vec x) && \vec{x} \in \Omega_{\mathcal{I}},
    \end{aligned}
    \right.
\end{equation}
where $f \in L^2(\Omega)$ and $g \in \operatorname{trace}_{\Omega_{\mathcal{I}}}V(\Omega \cup \Omega_\mathcal{I})$ with 
\begin{align}\label{eqn:nonlocal-trace-def}
  \operatorname{trace}_{\omega} \mathcal H := \{v \in L^2(\omega) \mid \exists u \in \mathcal H, u|_{\omega} = v \}.
\end{align}
The choice of kernel endows the nonlocal operator in \cref{eqn:nonlocal-diff} with several properties.
Among others it determines the regularity lifting of the above Poisson problem.

\Cref{eqn:nonlocal-model-full} generally corresponds to a linear system which is difficult to assemble and solve, owing to the nonlocal nature of the operator \cref{eqn:nonlocal-diff}.
We strive to reduce the computational complexity by replacing $\mathcal{L}_\nonlocal$ with a cheaper surrogate operator.
To do so, we assume two open sets $\Omega_\local, \Omega_\nonlocal \subset \Omega$ such that
\begin{align}\label{eqn:domain-assumption}
  \overline \Omega_\nonlocal \cup \overline \Omega_\local = \overline\Omega.
\end{align}
On $\Omega_\nonlocal$ the nonlocal equation involving \(\mathcal{L}_{\nonlocal}\) is solved, while a classical local model is used on $\Omega_\local$.
We adopt the convention that a subscript of $\cdot_\nonlocal$ or $\cdot_\local$ signify nonlocal and local respectively. 
Naturally, one is inclined to choose subdomains such that the intersection \(\Omega_{\nonlocal}\cap\Omega_{\local}\) is as small as possible in order to avoid unnecessary computation.
Ideally we would like to choose the two subdomains to be disjoint, but it will be apparent that using an overlap of \(\mathcal{O}(h)\) is preferable in the discrete setting with
\(h\) being the mesh size.
In the continuum limit of \(h\rightarrow0\) this corresponds to the disjoint case, yet it will be apparent very soon that the continuum problem is significantly harder to discuss than the discrete case.

Since nonlocal equations rely on boundary values defined in a $\delta$-collar around $\Omega_\nonlocal$, we define $\Omega_\interactionSolve \subset \Omega$ % to be the interaction region of the nonlocal operator within the local region, specifically
\begin{align*}
    \Omega_\interactionSolve = \{\vec x \in  \Omega\setminus\Omega_{\nonlocal} \mid \exists \vec y \in \Omega_\nonlocal \text{ s.t. } \gamma(\vec x, \vec y) \not = 0 \}
\end{align*}
be the open set of the nonlocal boundary condition which also lies in the local region.
Similarly, let
\begin{align*}
    \Omega_{\interactionGiven} =  \{\vec x \in  \mathbb{R}^{d}\setminus\Omega \mid \exists \vec y \in \Omega_\nonlocal \text{ s.t. } \gamma(\vec x, \vec y) \not = 0\}
\end{align*}
be the remaining interaction region corresponding to a prescribed, Dirichlet boundary condition. 
Thus our nonlocal model posed on the subdomain \(\Omega_{\nonlocal}\) is
\begin{equation}\label{eqn:nonlocal-model}
  \left\{
    \begin{aligned}
      -\mathcal{L}_N u(\vec x) &= f(\vec x), && \vec x \in \Omega_{\nonlocal}, \\
      u_N(\vec x) &= u_L(\vec x), && \vec x \in \Omega_{\interactionSolve}, \\
      u_N(\vec x) &= g(\vec x),  && \vec x \in\Omega_{\interactionGiven},
    \end{aligned}
    \right.
\end{equation}
where $u_L$ the solution on the local domain defined below.

We assume that the local model is simply the usual integer-order Laplacian $\mathcal{L}_L = \Delta$.
This is motivated by the fact that in the local limit $\delta\rightarrow 0$ the nonlocal operator $\mathcal{L}_\nonlocal$ recovers the local one, assuming appropriate normalization, as for example in \eqref{eq:fracKernelNormalization}, \eqref{eq:integrableKernelNormalization}.
Let $\localBoundSolve = \partial \Omega_L \cap \Omega$ be the open set corresponding to the boundary of $\Omega_\local$ which intersects $\Omega_\nonlocal$, and let $\localBoundGiven = \partial \Omega_\local \setminus \localBoundSolve$ be the remaining boundary which is defined by a given Dirichlet boundary condition.
The model on $\Omega_\local$ is thus
\begin{equation}\label{eqn:local-model}
  \left\{
\begin{aligned}
    -\mathcal{L}_L  u_L(\vec x) &= f(\vec x), && \vec x \in \Omega_{\local}, \\
     u_L(\vec x) &=  u_N(\vec x), && \vec x \in \localBoundSolve, \\
     u_L(\vec x) &= g(\vec x), && \vec x \in \localBoundGiven.
\end{aligned}
\right.
\end{equation}
We assume that in addition to previous regularity assumptions, the boundary condition satisfies $g \in H^{1/2}(\localBoundGiven)$.

In what follows, we consider equations \cref{eqn:nonlocal-model} and \cref{eqn:local-model} as the strong form of a local-nonlocal coupling problem.
The system of equations is difficult to analyze in the continuous case, and may not even be properly defined.
For example, in \cref{eqn:local-model}, there is a question of compatible boundary conditions with $u_N$ and $g$.
Moreover, it is not at all clear whether $u_\nonlocal$ has the required regularity as boundary data for a classical local Poisson problem.
We will forgo an analysis of the problem at the continuous level and instead limit ourselves to a discussion of the discrete setting and prove well-posedness in that context only.

We illustrate the various subdomains for two different geometric configurations in \cref{fig:simple-domain} in both one and two dimensions.
For the purposes of this illustration, the overlap between \(\Omega_{\nonlocal}\) and \(\Omega_{\local}\) is chosen to be empty.
In the first configuration, we split the domain into a local subdomain on the left and a nonlocal subdomain on the right.
The nonlocal problems have a Dirichlet volume condition enforced on a non-empty region while the local problem resembles the usual Poisson problem. 
The second configuration illustrates the case of a nonlocal inclusion in a local domain.
In this case \(\Omega_{\interactionGiven}=\emptyset\) provided that the horizon \(\delta\) is small enough.

\begin{figure}
    \centering
    \begin{subfigure}[b]{0.48\textwidth}
      \centering
      \begin{tikzpicture}[scale=.8]
    \draw[very thick] (0,0) -- (6.5,0);
    % Nonlocal part
    \draw[draw, blue, double=blue, double distance=2\pgflinewidth](0,3pt) -- (0,-3pt); 
    \draw[draw, red, double=red, double distance=2\pgflinewidth](3,3pt) -- (3,-3pt); 
    \draw (3,3pt) -- (3,-3pt);
    \draw (0,3pt) -- (0,-3pt);
    \draw[densely dotted] (2.5,4pt) -- (2.5,-4pt);
    \draw[densely dotted] (6,  4pt) -- (6,  -4pt);
    \draw[densely dotted] (6.5,4pt) -- (6.5,-4pt);

\draw [decorate,decoration={brace,amplitude=2pt,raise=2ex}]
  (2.5,0) -- (3,0) node[midway,yshift=2em]{$\Omega_{\interactionSolve}$};
  \draw [decorate,decoration={brace,amplitude=2pt,raise=2ex}]
  (6,0) -- (6.5,0) node[midway,yshift=2em]{$\Omega_{\interactionGiven}$};
  % \draw[decorate, ultra thick] ($(C.north west)+(2ex,0pt)$) -- 
  %  ($(C.north east)-(9ex,0pt)$) node[above=3pt,midway] {some text};
\draw [decorate,decoration={brace,amplitude=5pt,mirror,raise=2ex}]
  (0,0) -- (3,0) node[midway,yshift=-2em]{$\Omega_\local$};
  \draw [decorate,decoration={brace,amplitude=5pt,mirror,raise=2ex}]
  (3,0) -- (6,0) node[midway,yshift=-2em]{$\Omega_\nonlocal$};
\end{tikzpicture}
    \end{subfigure}
        \begin{subfigure}[b]{0.48\textwidth}
      \centering
      \begin{tikzpicture}[scale=.8]
    \draw[very thick] (0,0) -- (6,0);
    % Nonlocal part
    \draw[draw, blue, double=blue, double distance=2\pgflinewidth](0,3pt) -- (0,-3pt); 
    \draw[draw, blue, double=blue, double distance=2\pgflinewidth](6,3pt) -- (6,-3pt); 
    \draw[draw, red, double=red, double distance=2\pgflinewidth](3.5,3pt) -- (3.5,-3pt); 
    \draw[draw, red, double=red, double distance=2\pgflinewidth](2.5,3pt) -- (2.5,-3pt); 
    \draw (6,3pt) -- (6,-3pt);
    \draw (0,3pt) -- (0,-3pt);
    \draw[densely dotted] (2.5,4pt) -- (2.5,-4pt);
    \draw[densely dotted] (3.5,  4pt) -- (3.5,  -4pt);
    \draw[densely dotted] (4.5,  4pt) -- (4.5,  -4pt);
    \draw[densely dotted] (1.5,  4pt) -- (1.5,  -4pt);

\draw [decorate,decoration={brace,amplitude=2pt,raise=2ex}]
  (1.5,0) -- (2.5,0) node[midway,yshift=2em]{$\Omega_{\interactionSolve}$};
  \draw [decorate,decoration={brace,amplitude=2pt,raise=2ex}]
  (3.5,0) -- (4.5,0) node[midway,yshift=2em]{$\Omega_{\interactionSolve}$};
    \draw [decorate,decoration={brace,amplitude=2pt,raise=2ex}]
  (2.5,0) -- (3.5,0) node[midway,yshift=2em]{$\Omega_\nonlocal$};
\draw [decorate,decoration={brace,amplitude=5pt,mirror,raise=2ex}]
  (0,0) -- (2.5,0) node[midway,yshift=-2em]{$\Omega_\local$};
\draw [decorate,decoration={brace,amplitude=5pt,mirror,raise=2ex}]
  (3.5,0) -- (6,0) node[midway,yshift=-2em]{$\Omega_\local$};
\end{tikzpicture}
    \end{subfigure}
\\

    \begin{subfigure}[b]{0.48\textwidth}
    \centering
            \begin{tikzpicture}[scale=2]
      % Draw highlights first
      \draw[draw, red, double=red, double distance=2\pgflinewidth] (1, 0) -- (1, 1); 
      \draw[draw, blue, double=blue, double distance=2\pgflinewidth] (0, 0) -- (1, 0); 
      \draw[draw, blue, double=blue, double distance=2\pgflinewidth] (0, 0) -- (0, 1); 
      \draw[draw, blue, double=blue, double distance=2\pgflinewidth] (0, 1) -- (1, 1); 
      \draw (0, 0) rectangle (2, 1) {};
      \node[] at (0.5, 0.5) {$\Omega_L$};
      \node[] at (1.5, 0.5) {$\Omega_N$};

      \draw (1, 0) -- (1, 1); 
      \draw [thick, ->] (1.5, 1.05) -- (1.5, 1.2) node[above] {$\Omega_{\interactionGiven}$};
      \draw [thick, ->] (.95, .15) -- (.75, -.1) node[below] {$\Omega_{\interactionSolve}$};
      \draw[rounded corners, densely dotted] (.9, -.1) rectangle (2.1, 1.1) {};
      \end{tikzpicture}
    \end{subfigure}
    \begin{subfigure}[b]{0.48\textwidth}
    \centering
            \begin{tikzpicture}[scale=1.5]
      % HIghlights first
      \draw[draw, red, double=red, double distance=2\pgflinewidth] (.7, 1) rectangle (1.1, 1.4) {};
      \draw[draw, blue, double=blue, double distance=2\pgflinewidth] (0, 0) rectangle (2, 2) {};

      \draw (0, 0) rectangle (2, 2) {};
      \draw (.7, 1) rectangle (1.1, 1.4) {};

      \node[] at (0.9, 1.2) {$\Omega_N$};
      \node[] at (1.5, 0.5) {$\Omega_L$};
      \draw [thick, ->] (0.9, 1.5) -- (0.9, 1.67) node[above] {$\Omega_{\interactionSolve}$};
      \draw[rounded corners, densely dotted] (.5, .8) rectangle (1.3, 1.6) {};
      \end{tikzpicture}
    \end{subfigure}
    \caption{
      Sketch of geometric configurations of the subdomains: left-right splitting on the \emph{left} and a nonlocal inclusion on the \emph{right}; in \emph{top}: 1D, \emph{bottom}: 2D
      The red/blue highlighted boundaries are $\localBoundSolve, \localBoundGiven$ respectively.
      Note that for simplicity, the domains were chosen such that the overlap \(\Omega_{\local}\cap\Omega_{\nonlocal}\) is empty.
    }
    \label{fig:simple-domain}
\end{figure}
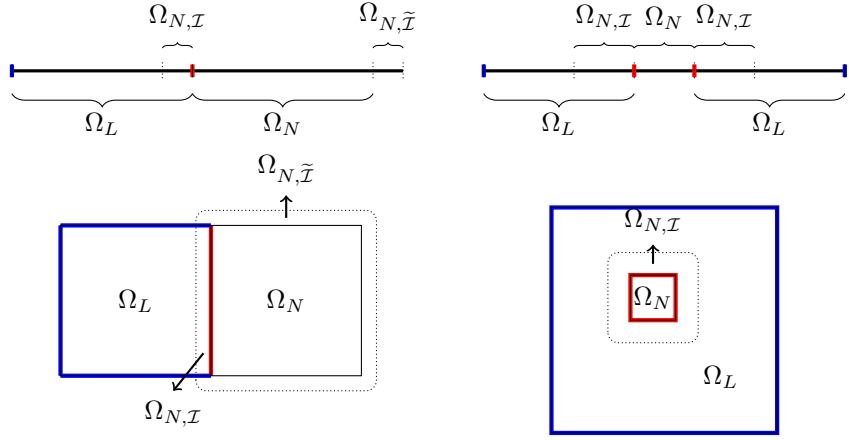

\subsection{Discretization using Finite Element Spaces of Identical Type}
\label{sec:discr-using-finite}
We assume that we have chosen a domain \(\Omega_{\local}\subset\Omega\).
In what follows, we will define a corresponding domain \(\Omega_{\nonlocal}\) with minimal overlap and continuous piecewise linear finite element spaces.

Let \(\mesh\) be a quasi-uniform mesh of $\Omega$ where the intersection of any two distinct elements is a point, an entire edge or the empty set.
We assume that \(\mesh\) contains a submesh \(\mesh[\local]\) that triangulates the domain \(\Omega_{\local}\) with the boundaries of $\Omega_\local$ exactly resolved.
We define the nonlocal mesh \(\mesh[\nonlocal]\) to be
\begin{align*}
  \mesh[\nonlocal] := \left\{K\in\mesh \mid \overline{K}\cap \overline{\Omega\setminus\Omega_{\local}}\neq \emptyset\right\},
\end{align*}
i.e. all elements \(K\) of \(\mesh\) that are outside of $\Omega_\local$ and one layer of elements inside of \(\Omega_{\local}\).
We define \(\Omega_{\nonlocal}\) as the union of all elements in \(\mesh[\nonlocal]\).
In particular, this means that \(\Omega_{\local}\) and \(\Omega_{\nonlocal}\) have an \(\mathcal{O}(h)\) overlap.

We also observe that this construction assures that the vertices of \(\mesh\) are partitioned into three sets: vertices in the interior of \(\Omega_{\local}\), vertices in the interior of \(\Omega_{\nonlocal}\) and vertices on \(\partial\Omega\).
Furthermore, we assume that \(\Omega_{\interactionSolve}\) is resolved exactly by a submesh \(\mesh[\interactionSolve]\) of \(\mesh[\local]\) and that there is a mesh \(\mesh[\interactionGiven]\) for \(\Omega_{\interactionGiven}\) that matches \(\mesh[\nonlocal]\) at the interface between \(\Omega_{\nonlocal}\) and \(\Omega_{\interactionGiven}\) as well as on the interface between \(\Omega_{\interactionSolve}\) and \(\Omega_{\interactionGiven}\).
In other words, we may assume there exists a global mesh on all of $\Omega \cup \Omega_{\interactionGiven}$ such that $\mesh[\local]$, \(\mesh[\nonlocal]\), $\mesh[\interactionSolve]$ and $\mesh[\interactionGiven]$ are submeshes.
We refer the reader to \cref{fig:dofs} for two figures of example meshes.

\begin{remark}\label{rem:domain-definition}
  A particular consequence of the above construction is that \(\Omega_{\nonlocal}\) is mesh dependent.
  For example, if \(\Omega=(-1,1)\) and \(\Omega_{\local}=(-1,0)\), and \(\mesh\) is uniform of size \(h\), then \(\Omega_{\nonlocal}=(-h,1)\).
  Alternatively, one might prefer to partition the nodal degrees of freedom of the \(\mathbb{P}_{1}\) finite element space supported by \(\mesh\) into the disjoint index sets \(\is[\local]\) and \(\is[\nonlocal]\) and define the subdomains as the support of the finite element sub-spaces.
  I.e. \(\Omega_{\local}\) is the support of finite element functions whose degrees of freedom are located at \(x\) with \(x<0\) and \(\Omega_{\nonlocal}\) is the support of functions spanned by basis functions with coordinate \(x\geq0\).
\end{remark}

Let 
\begin{align*}
 V_{h, L} := \{\mathfrak u \in H^1(\Omega_\local) \mid \mathfrak u|_{K} \in \mathbb{P}_1, \forall K \in \mathcal T_{L} \}
\end{align*}
be the standard, $C^0$-continuous FE space on $\mesh[\local]$, and let $V_{h, \local, 0} := \{\mathfrak u \in V_{h, \local} \mid \mathfrak u|_{\partial \Omega_\local} = 0\} $ be the zero Dirichlet boundary condition subspace.
Here and in what follows we use Gothic script \(\mathfrak u\) to distinguish FE functions from generic elements \(u\) of the Sobolev spaces.
The weak-form for \cref{eqn:local-model} on the mesh is as usual: find $\mathfrak u_\local \in V_{h, \local}$ such that for all $\mathfrak v_L \in V_{h,\local,0}$
\begin{align}\label{eqn:weak-local}
    \int_{\Omega_{\local}} \nabla  \mathfrak u_\local \cdot \nabla  \mathfrak v_\local  \, dx = \int_{\Omega_{\local}} f  \mathfrak v_\local \, dx
\end{align}
with essential boundary conditions $\mathfrak u_\local = \mathfrak u_\nonlocal$ on $\localBoundSolve$ and $\mathfrak u_\local = g$ on $\localBoundGiven$.
\(\mathfrak u_{\nonlocal}\) will be defined below.

\begin{figure}
  \centering
   \begin{subfigure}[b]{0.48\textwidth}
       \centering
       \includegraphics[width=\textwidth]{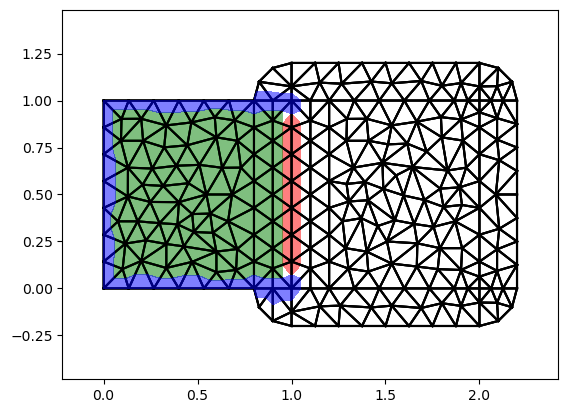}
       \caption{Local}
   \end{subfigure}
   \hfill
   \begin{subfigure}[b]{0.48\textwidth}
       \centering
       \includegraphics[width=\textwidth]{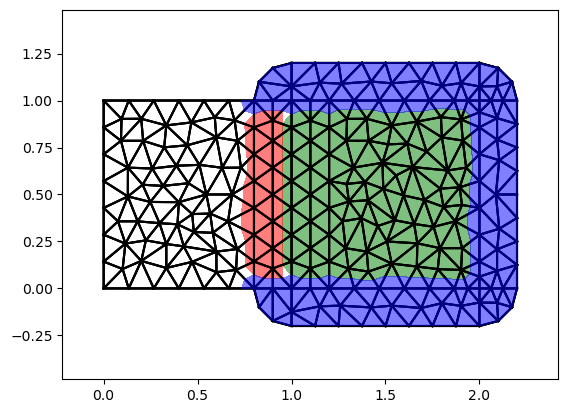}
       \caption{Nonlocal}
   \end{subfigure}\\
      \begin{subfigure}[b]{0.48\textwidth}
       \centering
       \includegraphics[width=\textwidth]{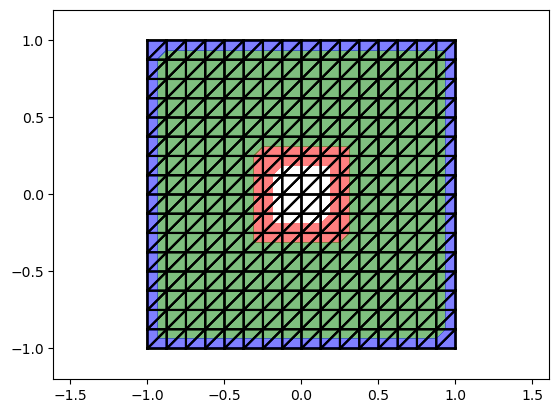}
       \caption{Local}
   \end{subfigure}
   \hfill
   \begin{subfigure}[b]{0.48\textwidth}
       \centering
       \includegraphics[width=\textwidth]{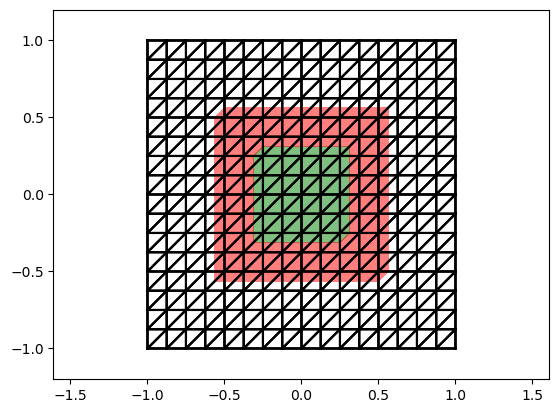}
       \caption{Nonlocal}
   \end{subfigure}\\

   \caption{
     Figure illustrating the degrees of freedom and the corresponding index sets.
     The color green corresponds to $\is[\local]$, $\is[\nonlocal]$, red corresponds to $\is[\localBoundSolve]$, $\is[\interactionSolve]$ and blue corresponds to $\is[\localBoundGiven]$, $\is[\interactionGiven]$ in the local/nonlocal case respectively.
     Note that \(\is[\localBoundSolve]\subset \is[\nonlocal]\) and \(\is[\interactionSolve]\subset \is[\local]\).
     We show two local-nonlocal splittings.
     The top row shows \(\Omega=(0,2)\times(0,1)\) with $\Omega_\local = (0, 1)^{2}$.
     The bottom row shows \(\Omega=(-1,1)^{2}\) with \(\Omega_\local = \Omega\setminus[-.25, .25]^2\).
   }
  \label{fig:dofs}
\end{figure}

For all the degrees of freedom (dofs) of $V_{h, \local}$, there is a natural identification with its coordinate by the usual finite element triple \cite{ciarlet2002finite}.
Let $\is[\local]$ be the index set of degrees of freedom of $\mesh[\local]$ which is strictly in the interior of the domain $\Omega_\local$, 
let $\is[\localBoundSolve]$ correspond to the dofs on $\localBoundSolve$ of $\mesh[\local]$ and similarly for $\is[\localBoundGiven]$.
This means that $\is[\local] + \is[\localBoundSolve] + \is[\localBoundGiven]$ is all the vertices of $\mesh[\local]$. 
See \cref{fig:dofs}a and \cref{fig:dofs}c for an illustration of the index sets.

Using the coefficient vectors \(\vec{u}_{\local}\), \(\vec{u}_{\localBoundSolve}\) and \(\vec{u}_{\localBoundGiven}\) we can write 
\begin{align}\label{eqn:basis-func-local-def}
  \mathfrak u_{\local}=\vec{u}_{\local}\cdot\vec{\Phi}_{\local}+\vec{u}_{\local,\localBoundSolve}\cdot\vec{\Phi}_{\localBoundSolve}+\vec{u}_{\local,\localBoundGiven}\cdot\vec{\Phi}_{\localBoundGiven}.
\end{align}
Here \(\vec{\Phi}_{\local}\) is the vector of basis functions corresponding to the index set \(\is[\local]\).
\(\vec{\Phi}_{\localBoundSolve}\) and \(\vec{\Phi}_{\localBoundGiven}\) are similarly defined.

With the dofs partitioned, \cref{eqn:weak-local} can be written as a blocked linear system
\begin{align*}
  \begin{bmatrix}
        \lapl[\local, \local] & \lapl[\local, \localBoundSolve]  & \lapl[\local, \localBoundGiven] \\
        & \matr{I}_{\localBoundSolve}& \\
        &                            & \matr{I}_{\localBoundGiven}
    \end{bmatrix}
    \begin{bmatrix}
        \vec u_{\local} \\ \vec u_{\local,\localBoundSolve} \\ \vec u_{\local,\localBoundGiven}
    \end{bmatrix}
    =
    \begin{bmatrix}
        \vec f_{\local} \\ \vec u_{\nonlocal,\localBoundSolve} \\ \vec g_{\localBoundGiven}
    \end{bmatrix}
\end{align*}
where the subscripts indicate the interaction between the dofs of the index sets.
The vectors $\vec f_{\omega}$, $\vec g_{\omega}$ indicates the forcing term or essential boundary condition on $\omega$ respectively.

By eliminating the degrees of freedom of the essential boundary condition on \(\localBoundGiven\), we can rewrite the system as
\begin{align}\label{eqn:localmatrix}
    \begin{bmatrix}
        \lapl[\local, \local] & \lapl[\local, \localBoundSolve]  \\
        & \matr{I}_{\localBoundSolve} \\
    \end{bmatrix}
    \begin{bmatrix}
        \vec u_{\local} \\ \vec u_{\local,\localBoundSolve}
    \end{bmatrix}
    =
    \begin{bmatrix}
        \vec f_{\local} - \lapl[\local, \localBoundGiven]\vec g_{\localBoundGiven}\\ \vec u_{\nonlocal,\localBoundSolve}
    \end{bmatrix}.
\end{align}

\begin{remark}\label{rem:discrete-cont}
Henceforth in the manuscript, bold face notation $\vec u$ indicates either the vector and functional form whichever makes sense from context by using the isomorphism \cref{eqn:basis-func-local-def} and the nonlocal equivalent.
\end{remark}

Similarly, for the nonlocal region, we let 
\begin{align}\label{eqn:nonlocal-fem-space}
    V_{h, \nonlocal} &= 
         \{\mathfrak u \in H^1(\overline{\Omega}_\nonlocal \cup \overline{\Omega}_{\interactionGiven} \cup \overline{\Omega}_{\interactionSolve}) \mid \mathfrak u|_{K} \in \mathbb{P}_1, \forall K \in \mathcal T_{\globalNonlocal} \}.
\end{align}
where $\mesh[\globalNonlocal] = \mesh[\nonlocal] \cup \mesh[\interactionSolve] \cup \mesh[\interactionGiven]$.
Let
\begin{align*}
  V_{h, \nonlocal, 0} := \{\mathfrak u \in V_{h, \nonlocal} \mid \mathfrak u|_{\Omega_{\interactionSolve}\cup \Omega_{\interactionGiven}}=0  \}.
\end{align*}
We solve the variational problem corresponding to \cref{eqn:nonlocal-model}:
find $\mathfrak u_\nonlocal\in V_{h,\nonlocal}$ such that for all $\mathfrak v_\nonlocal\in V_{h,\nonlocal,0}$
\begin{align}\label{eqn:weak-nonlocal}
    \frac{1}{2}\iint_{\mathbb{R}^d} {(\mathfrak u_\nonlocal(\vec x) - \mathfrak u_\nonlocal(\vec y))(\mathfrak v_{\nonlocal}(\vec x) - \mathfrak v_{\nonlocal}(\vec y))} \gamma(\vec x, \vec y) \, dx dy= \int_{\Omega_\nonlocal} f  \mathfrak v_\nonlocal \, dx
\end{align}
with Dirichlet boundary conditions $\mathfrak u_\nonlocal = g$ on $\Omega_{\interactionGiven}$ and $\mathfrak u_\nonlocal= \mathfrak u_\local$ on $\Omega_{\interactionSolve}$.

As in the local domain, we identify the dofs with their coordinates.
Let $\is[\nonlocal]$ be the set of dofs of $\mesh[\nonlocal]$ which is within $\Omega_\nonlocal$ including those dofs which lie on the intersection $\partial \Omega_\nonlocal \cap \partial \Omega_\local$.
Let $\is[\interactionSolve]$, $\is[\interactionGiven]$ be the dofs corresponding to $\Omega_{\interactionSolve}$, $\Omega_{\interactionGiven}$ respectively.
Thus, $\is[\nonlocal] + \is[\interactionSolve] + \is[\interactionGiven]$ gives a disjoint partitioning of the dofs of $\mesh[\globalNonlocal]$.
See \cref{fig:dofs}b and \cref{fig:dofs}d for an illustration of the index sets.
As before we can write \(\mathfrak u_{\nonlocal}=\vec{u}_{\nonlocal}\cdot\vec{\Phi}_{\nonlocal}+\vec{u}_{\nonlocal,\interactionSolve}\cdot\vec{\Phi}_{\interactionSolve}+\vec{u}_{\nonlocal,\interactionGiven}\cdot\vec{\Phi}_{\interactionGiven}\).

Using these index sets \Cref{eqn:weak-nonlocal} may be written as a blocked linear equation
\begin{align*}
    \begin{bmatrix}
        \lapl[\nonlocal, \nonlocal] & \lapl[\nonlocal, (\interactionSolve)] & \lapl[\nonlocal, (\interactionGiven)] \\
                                    & \matr{I}_{\interactionSolve}        & \\
                                    &                                     & \matr{I}_{\interactionGiven}  \\
    \end{bmatrix}
  \begin{bmatrix}
        \vec u_{\nonlocal} \\ \vec u_{\nonlocal,(\interactionSolve)} \\ \vec u_{\nonlocal,(\interactionGiven)}
    \end{bmatrix}=
  \begin{bmatrix}
        \vec f_{\nonlocal} \\ \vec u_{\local,(\interactionSolve)} \\ \vec g_{\interactionGiven}
    \end{bmatrix}.
\end{align*}
By eliminating the degrees of freedom of the essential volume condition, we can rewrite the system as
\begin{align}\label{eqn:nonlocalmatrix}
    \begin{bmatrix}
        \lapl[\nonlocal, \nonlocal] & \lapl[\nonlocal, (\interactionSolve)] \\
                                    & \matr{I}_{\interactionSolve}
    \end{bmatrix}
  \begin{bmatrix}
        \vec u_{\nonlocal} \\ \vec u_{\nonlocal,(\interactionSolve)}
    \end{bmatrix}=
  \begin{bmatrix}
        \vec f_{\nonlocal} - \lapl[\nonlocal, (\interactionGiven)]\vec g_{\interactionGiven} \\ \vec u_{\local,(\interactionSolve)} \\
    \end{bmatrix}.
\end{align}
Having detailed the discretization of local and nonlocal problems, we can now describe their coupling to each other, as is already foreshadowed by the choice of notation.

\section{Local-to-Nonlocal Coupling for Weak Forms}
\label{sec:local-nonl-coupl}

Splice methods are commonly used for coupling of local and nonlocal equations in strong form \cite{galvanetto2016effective,silling2015variable,shojaei2016coupled}.
The splicing methods in literature consist of coupling between meshfree methods for nonlocal descriptions and finite elements for the local equations.
In contrast, we present a splice method that combines weak forms for both the local and the nonlocal problem.

\subsection{Matrix Definition and Computational Properties}
Let $n_\local = \abs{\is[\local]}$, $n_\nonlocal = \abs{\is[\nonlocal]}$, $n_\interactionSolve = \abs{\is[\interactionSolve]}$, $n_\localBoundSolve = \abs{\is[\localBoundSolve]}$ denote the total number of degrees of freedom corresponding to $\Omega_L$, $\Omega_N$, $\Omega_{\interactionSolve}$, $\Gamma$ respectively.
Due to the construction of the meshes, $\is[\nonlocal] + \is[\local]$ is a disjoint partition of all the dofs that are inside $\Omega$.
By the same token, all interior dofs that are shared between the two sub-problems can be partitioned into \(\is[\interactionSolve]\) and \(\is[\localBoundSolve]\).
Let $n = n_\local + n_\nonlocal$.
Note that $n$ is the number of dofs which are undetermined given a boundary value problem on the mesh, and hence a global dof $\is$ ordering can be imposed.

Let $\restr[{\local}] : \mathbb{R}^n \to \mathbb{R}^{n_\local}$ and $\restr[{\nonlocal}]: \mathbb{R}^n \to \mathbb{R}^{n_\nonlocal}$ be restriction operators from the global ordering to the local/nonlocal region.
In particular, since we assume each of the $n$ dofs correspond to either local/nonlocal region, we may assume without loss of generality that 
\begin{align*}
    \begin{bmatrix}
        \restr[{\local}] \\
        \restr[\nonlocal]
    \end{bmatrix} = \matr{I}\in\mathbb{R}^{n \times n}.
\end{align*}
Furthermore, let $\restr[{\localBoundSolve}] : \mathbb{R}^n \to \mathbb{R}^{n_{\localBoundSolve}}$ and $\restr[{\interactionSolve}]: \mathbb{R}^n \to \mathbb{R}^{n_\interactionSolve}$ be restriction operators from the global ordering to the local/nonlocal subdomain's interior boundary conditions.
In \cref{fig:dofs}, we see that $\restr[\local]$, $\restr[\nonlocal]$ correspond to the green dofs, while $\restr[\localBoundSolve]$, $\restr[\interactionSolve]$ correspond to the red dofs.

With the above, we can define the splicing LtN coupling: find $\vec u^{S}= \left[ \vec{u}_{\local}; \vec{u}_{\nonlocal}\right] \in \mathbb{R}^n$ such that
\begin{align}
    \matr A^{S} \vec u^{S} = \vec h^{S}
\end{align}
where
\begin{align}\label{eqn:splice-operator}
    \matr A^{S} :=
        \restr[\local]^T \begin{bmatrix}
        \lapl[\local, \local] & \lapl[\local, \localBoundSolve] 
    \end{bmatrix}\begin{bmatrix}
        \restr[\local] \\
        \restr[\localBoundSolve]
    \end{bmatrix} + 
        \restr[\nonlocal]^T \begin{bmatrix}
        \lapl[\nonlocal, \nonlocal] & \lapl[\nonlocal, (\interactionSolve)]
    \end{bmatrix}\begin{bmatrix}
        \restr[\nonlocal] \\
        \restr[\interactionSolve]
    \end{bmatrix}
\end{align}
and 
\begin{align}\label{eqn:splice-rhs}
    \vec h^{S} &:= \restr[\local]^T(\vec f_\local -  \lapl[\local, \localBoundGiven]\vec g_{\localBoundGiven})+
        \restr[\nonlocal]^T (\vec f_\nonlocal - \lapl[\nonlocal, (\interactionGiven)] \vec g_{\interactionGiven}) \\
  &=\vec{f} - \restr[\local]^T\lapl[\local, \localBoundGiven]\vec g_{\localBoundGiven}- \restr[\nonlocal]^T \lapl[\nonlocal, (\interactionGiven)] \vec g_{\interactionGiven}. \nonumber
\end{align}
As usual, the transpose of the restriction operators $\restr[\cdot]^T$ are the corresponding prolongation operator.

\begin{figure}
    \centering
    \includegraphics[width=.5\textwidth]{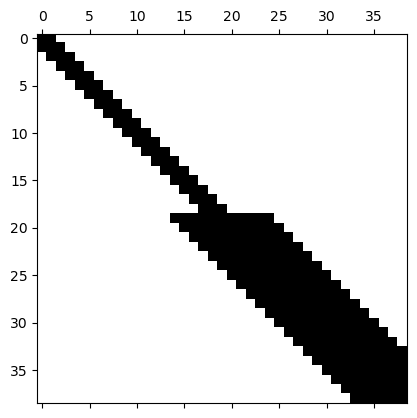}   
    \caption{Figure illustrating the splice matrix for a 1D example. 
    The top half rows correspond to dofs in the local subdomain, resulting in a bandwidth of 3 while the bottom half are dofs on the nonlocal subdomain.}
    \label{fig:matrix-spy}
\end{figure}

The splicing operator $\matr A^{S}$ is non-symmetric.
Each row of $\matr A^{S}$ corresponds to obtaining the \emph{entire} row from either \cref{eqn:localmatrix,eqn:nonlocalmatrix} depending on the location of the dof of that row.
See \cref{fig:matrix-spy} for an illustration of $\matr A^{S}$ in 1D where the first 18 dofs lie on the local domain, while the remaining are in the nonlocal domain.
The local domain has three entries per row, whereas the bandwidth in the nonlocal domain is much wider, commensurate with \(1+2\delta/h\).

Since the coupling method is not intrusive, it is easy to implement. 
In particular, no locally modified kernels or complicated boundary conditions are needed as compared to other coupling methods.

A major advantage of the splicing method is that $\matr A^{S}$ is cheaper to assemble than the nonlocal stiffness matrix on the entire domain.
In particular, only the dofs within $\Omega_N$ need expensive quadrature rules for nonlocal operators, while the remaining dofs can use the standard, integer-order construction.
We will see in time that we incur relatively little error given that $\Omega_\nonlocal$ is chosen appropriately.

On the other hand, an even simpler (but computationally inefficient) way to construct \(\matr A^{S}\) is to compute the matrices \(\matr A^{\local}\) and \(\matr A^{\nonlocal}\) corresponding to the fully local and nonlocal problems where $\Omega_\local = \Omega$ or $\Omega_\nonlocal = \Omega$ respectively and compute the coupled system via 
\begin{align}\label{eqn:spliced-alternate}
  \matr A^{S}=\matr R_{\local}^{T}\matr R_{\local}\matr A^{\local}+\matr R_{\nonlocal}^{T}\matr R_{\nonlocal}\matr A^{\nonlocal}.
\end{align}

Finally, we note that while the conjugate gradient method is no longer a valid solver choice for $\matr A^{S}$ due to lack of symmetry, we suspect that efficient preconditioners can be easily designed for $\matr A^{S}$ and used with Krylov methods such as GMRES or BiCGStab.
We delay this topic for future investigation.

\subsection{Extension to \texorpdfstring{$\mathbb{P}_0$}{P0} Elements}\label{sec:p0-section}
For nonlocal kernels with limited regularity lifting, it may be preferable to choose a discontinuous finite element space for the nonlocal problem.
We now consider a piecewise constant $\mathbb{P}_0$ space for the nonlocal domain, but keep the piecewise linear continuous space for the local discretization.
Thus, instead of \cref{eqn:nonlocal-fem-space} we now have
\begin{align*}
    V_{h, \nonlocal} &= \{\mathfrak u \in L^2(\overline{\Omega}_\nonlocal \cup \overline{\Omega}_{\interactionGiven} \cup \overline{\Omega}_{\interactionSolve}) \mid \mathfrak u|_{K} \in \mathbb{P}_0, \forall K \in \mathcal T_{\mathring  N} \}
\end{align*}
and the dofs of $V_{h,N}$ can now be identified with the centroids of $\mesh[\globalNonlocal]$.

Instead of first choosing \(\Omega_{\local}\) and then constructing a domain \(\Omega_{\nonlocal}\) with minimal overlap, we now choose \(\Omega_{\nonlocal}\) first and then define \(\Omega_{\local}\).
We require that the dof coordinates between local and nonlocal domains (including boundaries) match wherever they overlap, meaning a different local domain \(\Omega_{\local}\) and mesh \(\mesh[\local]\) is needed compared to the previous \(\mathbb{P}_{1}\)--\(\mathbb{P}_{1}\) coupling.

For \(k=0,1\) and a mesh \(\mathcal{U}\) let \(\mathcal{D}_{\mathcal{U}}^{\mathbb{P}_{k}}\) be the set of dof coordinates of the \(\mathbb{P}_{k}\) finite element space defined on \(\mathcal{U}\).
The construction of domains and meshes is as follows:
\begin{enumerate}
\item Choose \(\Omega_{\nonlocal}\) (and hence the corresponding interaction region $\Omega_{\interactionGiven}, \Omega_{\interactionSolve}$).
\item Construct a mesh \(\mesh\) on \(\Omega\cup\Omega_{\interactionGiven}\) (e.g. even on the region outside of $\Omega_\nonlocal$) with submeshes \(\mesh[\nonlocal]\), $\mesh[\interactionSolve]$ and $\mesh[\interactionGiven]$.
\item Construct mesh \(\mesh[\local]\) having vertices
  \begin{align*}
    &\left\{\vec{x}\in \mathcal{D}_{\mesh}^{\mathbb{P}_{0}} \mid \vec{x}\in K\in \mesh\setminus\mesh[\interactionGiven] \text{ and } \right. \\
    &\qquad\left.\overline{K} \cap \overline{(\Omega\setminus\Omega_{\nonlocal})} \text{ is a \(d\) or \(d-1\) dimensional sub-simplex of } \mesh\right\} \\
    &\cup\left\{\vec{x}\in\mathcal{D}_{\mesh}^{\mathbb{P}_{1}} \mid \vec{x}\in \partial\Omega \setminus \Omega_{\nonlocal}\right\}.
  \end{align*}
\item
  Take \(\Omega_{\local}\) to be the union of elements of \(\mesh[\local]\).
\end{enumerate}

In step 3, the first set of vertices corresponds to the \(\mathbb{P}_{0}\) dof coordinates in \(\Omega\) that are on elements which are part of or share a \(d-1\) dimensional sub-simplex (a vertex for \(d=1\) and an edge for \(d=2\)) with \(\Omega\setminus\Omega_{\nonlocal}\).
The second set is simply the vertices on the boundary of the domain. 
In practice, step 3 can be implemented using any mesh generation algorithm that allows one to pre-define the set of vertices.
We refer the reader to \cref{fig:p0-p1-coupling-meshes} for example meshes in both 1D and 2D.

\begin{figure}
       \centering
     \begin{subfigure}[b]{0.48\textwidth}
         \centering
         \includegraphics[width=\textwidth]{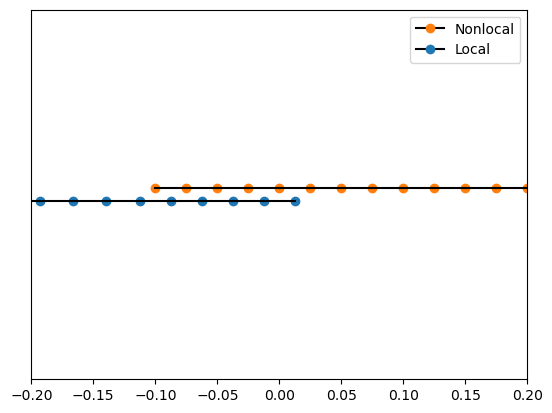}
         \caption{1D: the blue nodes correspond to dofs of the $\mathbb{P}_1$ mesh while the dofs of the $\mathbb{P}_0$ mesh lies at the midpoints between two orange nodes.}
     \end{subfigure}
     \hfill
     \begin{subfigure}[b]{0.48\textwidth}
         \centering
         \includegraphics[width=\textwidth]{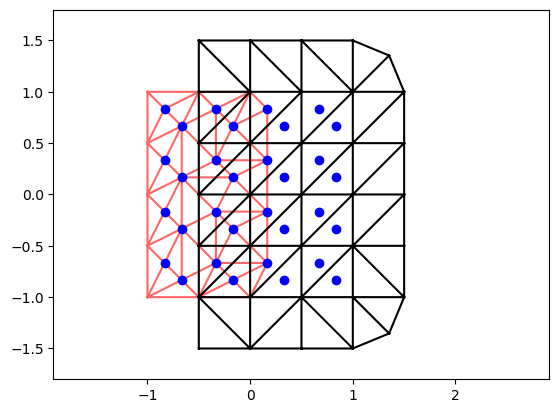}
         \caption{2D: the blue nodes correspond to the dofs of the two meshes. 
         The red edges are the edges of the $\mathbb{P}_1$ mesh, and the black edges correspond to the $\mathbb{P}_0$ mesh.}
                  \label{eqn:good-fig-p0p1}
     \end{subfigure}
      \begin{subfigure}[b]{0.48\textwidth}
         \centering
         \includegraphics[width=\textwidth]{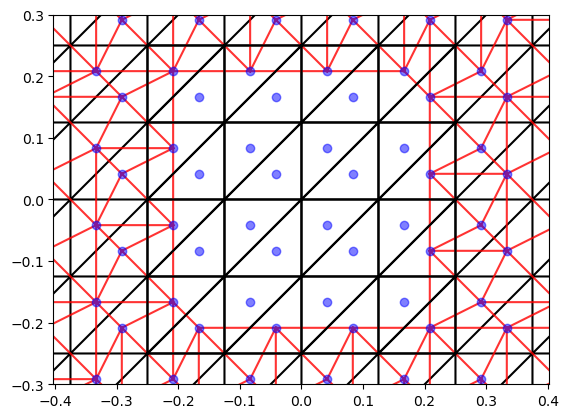}
         \caption{2D: the blue nodes correspond to the dofs of the two meshes. 
         The red edges are the edges of the $\mathbb{P}_1$ mesh, and the black edges correspond to the $\mathbb{P}_0$ mesh.}
     \end{subfigure}

     \caption{
       Example meshes for the non-matching meshes coupling.
       See \cref{rem:domain-nonmatching} regarding the notation of the domains. 
       In the 1D case, the domains are $\Omega_\local = (-1, 0), \Omega_\nonlocal = [0, 1)$ and for 2D they are $\Omega_\local = (-1, 0) \times (-1, 1), \Omega_N = [0, 1) \times (-1, 1)$ and $ \Omega_\local = (-1, 1)^2 \setminus [-.25, .25]^2, \Omega_N = (-.25 , .25 )^2$.
     }
     \label{fig:p0-p1-coupling-meshes}
\end{figure}

As a result of having constructed the local mesh in a manner that guarantees that local and nonlocal dofs are collocated in the interaction region, we can repeat the construction of the splice matrix as in the case of identical discretizations.

As an example for concreteness, we will refer to \cref{eqn:good-fig-p0p1} as a reference figure. 
The same quantities can be defined as before: let $n = \abs{\is[\nonlocal]} + \abs{\is[\local]}$ be the total number of unique dofs in the interior of $\Omega$.
In the case of \cref{eqn:good-fig-p0p1}, $\is[\local]$ corresponds to the 16 dofs strictly to the left of $x = 0$ while $\is[\nonlocal]$ are the 16 dofs strictly to the right, making $n = 32$.
$\is[\interactionSolve]$ are the 8 dofs to the left of $x = 0$ for the nonlocal mesh and $\is[\localBoundSolve]$ are the four dofs on the right of $x = 0$ corresponding to the $\mathbb{P}_1$ mesh.
While we do not have a global mesh that contains \(\mesh[\local]\) and \(\mesh[\nonlocal]\), the key observation is that we can impose a global ordering of the dofs, meaning that restriction matrices $\restr[\nonlocal]$, $\restr[\local]$, $\restr[\interactionSolve]$, $\restr[\localBoundSolve]$ can be introduced again.
Thus, the \emph{exact same operator} \cref{eqn:splice-operator} can be used for the splice coupling.

\begin{remark}\label{rem:domain-nonmatching}
  Similar to the domains defined in the matching case, it is not easy to define the exact domains. 
  Furthermore, since each interior dof can be classified as either a local $\is[\local]$ or a nonlocal dof $\is[\nonlocal]$, we employ the same strategy of labeling the dofs again to define $\Omega_\local, \Omega_\nonlocal$ in this case also. 
\end{remark}

\section{Properties of Splice LtN Coupling}
\label{sec:prop-splice-ltn}

The direct analysis of the above splice coupling matrix is difficult to perform, mainly due to the lack of symmetry in the resulting linear system.
However, we can show that there exists a unique solution to the splice operator in the $\mathbb{P}_1 - \mathbb{P}_1$ coupling case under some additional assumptions, by interpreting the method as an optimization-based LtN coupling similar to the one detailed in \cite{d2016coupling,DEliaBochev2021_FormulationAnalysisComputationOptimization}. 
A rigorous proof for the $\mathbb{P}_0 - \mathbb{P}_1$ coupling could be obtained in similar fashion.

\subsection{Continuous Optimization-Based Coupling}
Let \(\Omega_{b}:=\left(\Omega_{\nonlocal}\cup\Omega_{\interactionSolve}\cup\Omega_{\interactionGiven}\right)\cap \Omega_{L}\) be the overlap between nonlocal and local domains.
For the case of the splice method described above, several constraints were enforced with regards to the local and nonlocal domains, meshes and discretization.
These restrictions can be interpreted as a special case of optimization-based coupling which we first state in the continuous case. 

Let $\theta_{\local} \in H^{1/2}_{0}(\localBoundSolve)$, and $\theta_{\nonlocal} \in \{u \in \operatorname{trace}_{\Omega_\interactionSolve}V(\Omega_{\nonlocal}\cup\Omega_{\interactionSolve}\cup\Omega_{\interactionGiven} ) \mid u|_{\Omega_\interactionGiven} = 0 \}$.
The continuous optimization problem is defined as
\begin{align}\label{eqn:cont-opt}
    \min_{\theta_\local,\theta_\nonlocal} \mathcal{J}(\theta_\local,\theta_\nonlocal) = \frac{1}{2} \int_{\Omega_b} \left(u_\nonlocal- u_\local\right)^2 \, dx
\end{align}
subject to
\begin{align}
    \left\{
    \begin{aligned}
      -\mathcal{L}_\nonlocal u_{\nonlocal}(\vec x) &= f(\vec x), && \vec x \in \Omega_{\nonlocal}, \\
      u_\nonlocal(\vec x) &= \theta_\nonlocal(\vec x), && \vec x \in \Omega_{\interactionSolve}, \\
      u_\nonlocal(\vec x) &= 0,  && \vec x \in\Omega_{\interactionGiven},
    \end{aligned}
    \right.
    \qquad
      \left\{
\begin{aligned}
    -\mathcal{L}_\local  u_\local(\vec x) &= f(\vec x), && \vec x \in \Omega_{\local}, \\
     u_\local(\vec x) &=  \theta_\local(\vec x), && \vec x \in \localBoundSolve, \\
     u_\local(\vec x) &= 0, && \vec x \in \localBoundGiven.
\end{aligned}
\right.
  \label{eqn:opt-cont-sub}
\end{align}
The trace spaces for $\theta_\local$, $\theta_\nonlocal$ are chosen such that the \cref{eqn:opt-cont-sub} are well-defined.

The two subdomain problems match \eqref{eqn:nonlocal-model} and \eqref{eqn:local-model} with the interior boundary and volume conditions replaced with control variables.
In essence, \cref{eqn:cont-opt} is minimizing the difference of local and nonlocal solutions in the region where both are defined.
We will show that if the local and nonlocal solution are identical in \(\Omega_{b}\), we recover the solution of the splice approach after discretization.

The main appeal of optimization-based LtN coupling is its flexibility.
Since the only interaction between the nonlocal and local domains is through an optimization problem, the method is non-intrusive and only requires one to compute the integral \cref{eqn:cont-opt}.
However, this flexibility comes at a computational cost as the optimization problem, in general, must be solved through an iterative minimization, where each iteration involves solving the two sub-problems of \cref{eqn:opt-cont-sub}.
This is quite undesirable, even more so in time-dependent problems.

Before proceeding, we wish to point out some constraints that were imposed in \cite{d2016coupling,DEliaBochev2021_FormulationAnalysisComputationOptimization} on the subdomains: in order to prove well-posedness of the optimization problem the domains had to be such that the local subdomain $\Omega_\local$ \emph{must} overlap the prescribed nonlocal boundary condition $\Omega_\interactionGiven$ (see \cite[Fig 2.]{DEliaBochev2021_FormulationAnalysisComputationOptimization}).
This means that 1D scenarios such as the left-right/inclusion scenario are not covered nor is the inclusion scenario in higher dimensions.
Nevertheless, many of the techniques from \cite{DEliaBochev2021_FormulationAnalysisComputationOptimization,d2016coupling} can be generalized with slight modifications to be applicable in our discrete case below.
While we focus on splice coupling, we conjecture that our work allows to analyze the optimization-based coupling approach in a more general context as well.

\subsection{Discretized Optimization-Based Coupling}
The discrete version of the optimization problem \eqref{eqn:cont-opt} can be obtained in the same manner as in \Cref{sec:discr-using-finite}.
Let 
\begin{align}
  \Theta_\local := \{v \in \operatorname{trace}_{\Omega_{\localBoundSolve}} \mathfrak w  \mid \mathfrak w \in V_{h, \local},~  \mathfrak w|_{\Omega_{\localBoundGiven}} = 0 \}
\end{align}
the space of discrete functions on the set of dofs in $\is[\localBoundSolve]$ with zero extension.
Similarly,
\begin{align*}
  \Theta_\nonlocal := \{v \in \operatorname{trace}_{\Omega_{\interactionSolve}} \mathfrak w  \mid \mathfrak w \in V_{h, \nonlocal},~  \mathfrak w|_{\Omega_{\interactionGiven}} = 0 \}
\end{align*}
be the space of discrete functions whose dofs are in $\is[\interactionSolve]$.
We note that $\Theta_\local$, $\Theta_\nonlocal$ are simply discrete function spaces on $\Gamma$, \(\Omega_\interactionSolve\) respectively from above and would correspond to the green area and blue line of \cref{fig:cont-case-domains}.

Let us define the discretized versions of \cref{eqn:opt-cont-sub} using finite elements.
For all $\vec \theta_L \in \Theta_L$, we define $\vec u_L(\vec \theta_L) \in V_{h, L}$ to be the solution to
\begin{align}\label{eqn:ul-opt}
     \lapl[\local, \local] \vec u_\local(\vec \theta_\local) = \vec f_\local -  \lapl[\local, \localBoundSolve] \vec \theta_\local % - \lapl[\local, \localBoundGiven] \vec g_{\localBoundGiven}
\end{align}
with Dirichlet boundary conditions $ \vec u_L(\vec \theta_\local) = \vec \theta_L$ on $\is[\localBoundSolve]$. %and $ \vec u_L(\vec \theta_\local) = \vec g_{\localBoundGiven}$ on $\is[\localBoundGiven]$.
Similarly, for all $\vec \theta_N \in \Theta_N$, let $\vec u_N(\vec \theta_N) \in V_{h, N}$ be the solution to
\begin{align}\label{eqn:un-opt}
     \lapl[\nonlocal, \nonlocal] \vec u_N(\vec \theta_N) = \vec f_\nonlocal - \lapl[\nonlocal, (\interactionSolve)] \vec \theta_N % -  \lapl[\nonlocal, \interactionGiven] \vec g_{\interactionGiven}.
\end{align}
with boundary conditions $ \vec u_\nonlocal(\vec \theta_\nonlocal) = \vec \theta_\nonlocal$ on $\is[\interactionSolve]$. % and $ \vec u_\nonlocal(\vec \theta_\nonlocal) = \vec g_{\interactionGiven}$ on $\is[\interactionGiven]$.
As shorthand, we will let $\vec u_L(\vec \theta_L) := \vec u_L$, $\vec u_N(\vec \theta_N) := \vec u_N$ for simplicity in the rest of the section.

The discrete optimization problem is defined as 
\begin{align}\label{eqn:discrete-opt}
  \min_{\vec \theta_L \in \Theta_L, \vec \theta_N \in \Theta_N} \mathcal{J}(\vec\theta_L, \vec\theta_N) &:= 
  \frac{1}{2} \int_{\Omega_b} \left(\vec u_\nonlocal- \vec u_\local\right)^2 \, dx \\
  \text{subject to \eqref{eqn:ul-opt}, \eqref{eqn:un-opt}} \nonumber
\end{align}
which seeks controls that minimizes the discrepancy between the two solutions wherever they overlap (i.e. $\Omega_b$ from above) in a discrete manner.
\begin{remark}
  The mass matrix corresponding to the integral in the objective function of \cref{eqn:discrete-opt} is spectrally equivalent to a diagonal matrix meaning a simple sum over the dofs can suffice in practice.
\end{remark}

In order to show that \cref{eqn:discrete-opt} is well-defined, several assumptions are needed on the nonlocal kernel.
However, these technical assumptions are satisfied by the kernels discussed in prior the introduction. 
For the exact statements, we refer the reader to the proof of \cref{thm:main-theorem} in the appendix. 
With the additional assumptions, we can show that there exists a unique solution that achieves a minimum of zero and that the solution is equivalent to the splice method:
\begin{theorem}\label{thm:main-theorem}
  The discrete optimization problem \eqref{eqn:discrete-opt} has a unique minimizer $(\vec \theta_\local^*, \vec \theta_\nonlocal^*)$. It satisfies $\mathcal J(\vec \theta_\local^*, \vec \theta_\nonlocal^*) = 0$, and the corresponding solution \((\vec{u}_{\local}(\vec{\theta}_{\local}^{*}),\vec{u}_{\nonlocal}(\vec{\theta}_{\nonlocal}^{*}))\) is the unique solution of the splice equations \eqref{eqn:splice-operator}.
\end{theorem}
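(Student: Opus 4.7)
The plan is to show that the nonnegative discrete objective $\mathcal J$ vanishes exactly on splice solutions, and then to establish that \cref{eqn:splice-operator} is uniquely solvable. The maps $\vec\theta_\local \mapsto \vec u_\local(\vec\theta_\local)$ and $\vec\theta_\nonlocal \mapsto \vec u_\nonlocal(\vec\theta_\nonlocal)$ defined by \cref{eqn:ul-opt,eqn:un-opt} are affine, because $\lapl[\local,\local]$ and $\lapl[\nonlocal,\nonlocal]$ are invertible under the standing coercivity assumptions on the kernel. Hence $\mathcal J$ is a nonnegative convex quadratic functional on the finite-dimensional space $\Theta_\local \times \Theta_\nonlocal$.

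Next I would establish the equivalence that $\mathcal J(\vec\theta_\local,\vec\theta_\nonlocal) = 0$ holds if and only if the pair $(\vec u_\local,\vec u_\nonlocal)$ corresponds to a solution of the splice equations. By the matching-mesh construction of \cref{sec:discr-using-finite}, the dofs contained in $\overline{\Omega_b}$ coincide with $\is[\localBoundSolve] \cup \is[\interactionSolve]$, so vanishing of the piecewise quadratic integrand on $\Omega_b$ forces $\vec u_\local = \vec u_\nonlocal$ at each of these dofs. Combined with the definitions $\vec u_\local|_{\is[\localBoundSolve]} = \vec\theta_\local$ and $\vec u_\nonlocal|_{\is[\interactionSolve]} = \vec\theta_\nonlocal$, this yields the couplings $\vec\theta_\local = \vec u_\nonlocal|_{\is[\localBoundSolve]}$ and $\vec\theta_\nonlocal = \vec u_\local|_{\is[\interactionSolve]}$. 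Substituting into \cref{eqn:ul-opt,eqn:un-opt} reproduces row-by-row the splice operator \cref{eqn:splice-operator}. Conversely, any splice solution $\vec u^S$ yields via $\vec\theta_\local^* := \restr[\localBoundSolve]\vec u^S$, $\vec\theta_\nonlocal^* := \restr[\interactionSolve]\vec u^S$ a feasible point with $\mathcal J = 0$.

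The substantive step is the invertibility of the splice matrix $\matr A^S$. Since $\matr A^S$ is square, it suffices to show its kernel is trivial. Given $\vec v$ with $\matr A^S \vec v = 0$, I would extend its two halves to FE functions $\tilde v_\local \in V_{h,\local}$ and $\tilde v_\nonlocal \in V_{h,\nonlocal}$ by filling in the complementary region's entries on $\localBoundSolve$ and $\Omega_{\interactionSolve}$ and setting the outer Dirichlet data to zero. The matching mesh structure in the overlap lets these glue into a single continuous piecewise linear function $\tilde v$ on $\Omega \cup \Omega_{\interactionGiven}$ that vanishes on the outer Dirichlet set. The main obstacle lies here: the splice rows only test against interior perturbations in $V_{h,\local,0}$ and $V_{h,\nonlocal,0}$, so one cannot directly test against $\tilde v_\local$ or $\tilde v_\nonlocal$ themselves. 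To circumvent this, I would decompose each extension into an interior FE function plus the nodal lift of its trace on the coupling interface, test the homogeneous local and nonlocal equations against the respective interior parts, and exploit the agreement of traces on $\is[\localBoundSolve]$ and $\is[\interactionSolve]$ to cancel the cross terms; coercivity of the local Dirichlet form together with a discrete nonlocal Poincaré inequality (the technical assumptions alluded to in the statement) then force $\tilde v \equiv 0$.

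Once $\matr A^S$ is invertible, the unique splice solution produces the pair $(\vec\theta_\local^*,\vec\theta_\nonlocal^*)$ described above with $\mathcal J = 0$, which is then a global minimizer of the nonnegative functional $\mathcal J$. Uniqueness of the minimizer is immediate: any other minimizer also achieves $\mathcal J = 0$, hence by the equivalence above corresponds to a splice solution, which must coincide with the unique $\vec u^S$; this forces $(\vec\theta_\local',\vec\theta_\nonlocal') = (\vec\theta_\local^*,\vec\theta_\nonlocal^*)$ and completes the proof.
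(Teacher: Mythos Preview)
Your overall logical scaffolding is sound and in places cleaner than the paper's: reducing everything to (i) the equivalence ``$\mathcal J=0$ $\Leftrightarrow$ splice solution'' and (ii) invertibility of $\matr A^{S}$ avoids the gradient computation the paper carries out in its Lemma on the overlap. Step~(i) is essentially the paper's equivalence lemma, and your uniqueness-of-minimizer argument at the end is correct.

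The gap is in step~(ii), the invertibility of $\matr A^{S}$. Your proposed route---decompose $\tilde v_\local,\tilde v_\nonlocal$ into interior parts plus trace lifts, test the homogeneous equations against the interior parts, and then ``cancel the cross terms'' using the agreement of traces on $\is[\localBoundSolve]\cup\is[\interactionSolve]$---does not close. The agreement of dof values on the overlap tells you $\tilde v_\local=\tilde v_\nonlocal$ on $\Omega_b$, but the residual cross terms you are left with are $a_\local(\tilde v_\local, E_\local(\vec\theta_\local))$ and $a_\nonlocal(\tilde v_\nonlocal, E_\nonlocal(\vec\theta_\nonlocal))$, living in \emph{different} bilinear forms over \emph{different} subdomains; matching traces gives no algebraic relation between them. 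This is precisely the manifestation of the non-symmetry of $\matr A^{S}$: there is no global energy that the glued function $\tilde v$ is stationary for, so a coercivity/Poincar\'e argument of the type you sketch cannot by itself force $\tilde v=0$.

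The paper handles this by an entirely different mechanism: a \emph{strengthened Cauchy--Schwarz inequality} on the overlap,
\[
  \abs{(\vec v_\local,\vec v_\nonlocal)_{L^2(\Omega_b)}}\le \overline c\,\norm{\vec v_\local}_{L^2(\Omega_b)}\norm{\vec v_\nonlocal}_{L^2(\Omega_b)},\qquad 0<\overline c<1,
\]
valid for discrete harmonic extensions. This is proved by contradiction using the \emph{strong} maximum principle for the local Laplacian together with a \emph{weak} maximum principle for the nonlocal operator (under the kernel assumptions alluded to in the theorem), exploiting the $\mathcal O(h)$ separation between $\localBoundSolve$ and $\Omega_{\interactionSolve}$. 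From it one gets $\mathcal J\ge(1-\overline c)\bigl(\norm{\vec u_\local}_{L^2(\Omega_b)}^2+\norm{\vec u_\nonlocal}_{L^2(\Omega_b)}^2\bigr)$, so $\mathcal J=0$ with homogeneous data forces $\vec\theta_\local=\vec\theta_\nonlocal=0$ and hence $\ker\matr A^{S}=\{0\}$. Your proof will need this maximum-principle ingredient (or something equivalent) in place of the energy-cancellation step.
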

The proof and its necessary lemmas are all technical, and delayed until the appendix. 
The above theorem means that the splice approach is equivalent to an optimization coupling method with a $\mathcal O(h)$ overlap between \(\Omega_{\local}\) and \(\Omega_{\nonlocal}\).
This implies that, besides inheriting existence/uniqueness properties, a host of other useful results can be used from \cite{DEliaBochev2021_FormulationAnalysisComputationOptimization}.
In particular, rigorous bounds on the error obtained from substituting a local operator for the nonlocal operator are available \cite[Theorem 5.2]{DEliaBochev2021_FormulationAnalysisComputationOptimization}, and that the method will pass the patch tests, discussed in more detail below.
Furthermore, the method is naturally asymptotically compatible provided that \(\delta\) is kept larger than \(h\).

\begin{figure}
    \centering
      \begin{subfigure}[b]{0.48\textwidth}
         \centering
         \includegraphics[width=\textwidth]{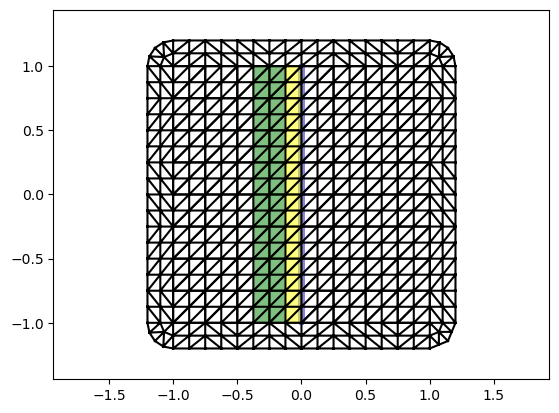}
         \caption{Matching Meshes}
         \label{fig:matching-cont}
     \end{subfigure}
      \begin{subfigure}[b]{0.48\textwidth}
         \centering
         \includegraphics[width=\textwidth]{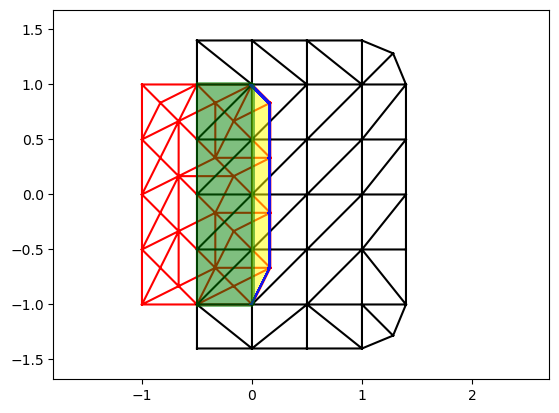}
         \caption{Non-matching Meshes.}
     \end{subfigure}
     \caption{
       The domains of the optimization method.
       \emph{Left:} matching \(\mathbb{P}_{1}-\mathbb{P}_{1}\) discretizations.
       \emph{Right}: non-matching \(\mathbb{P}_{1}-\mathbb{P}_{0}\) discretizations.
       The blue color edges corresponds to the support of $\Theta_{\local}$, the green area corresponds to the support of $\Theta_{\nonlocal}$.
       Green and yellow area together constitute $\Omega_b$.
     }
    \label{fig:cont-case-domains}
\end{figure}

\subsection{Patch Tests}
One fundamental property desired from LtN coupling is the ability to pass so-called \emph{patch tests} otherwise known as consistency tests.
While this property is also inherited from the optimization method, it is both illustrative and straightforward to show directly. 

Assume that there is a pair of functions \(u^{*}, f^{*}\) such that that the pair solves both the continuous fully nonlocal equation \eqref{eqn:nonlocal-model-full} as well as the continuous fully local equation
\begin{equation}\label{eqn:local-model-full}
  \left\{
    \begin{aligned}
    -\mathcal{L}_\local u^{*}(\vec x) &= f^{*}(\vec x)  && \vec{x} \in \Omega, \\
      u^{*}(\vec x) &= g(\vec x) && \vec{x} \in \partial\Omega,
    \end{aligned}
    \right.
\end{equation}
with the appropriate Dirichlet boundary or volume conditions.
Since the solutions to the continuous problems are identical, the solutions of the corresponding discrete problems will also match, at least up to discretization error.
We also want the coupled problem to recover the same solution (up to discretization error) for \emph{arbitrary} domain splittings.
For the diffusion operators that we consider, polynomial solutions up to degree 3 and their corresponding forcing solve both fully local and fully nonlocal problems in the continuous case.

Let $\vec u$, $\vec f$ be the vector corresponding to $u^*$, $f^*$ on a mesh $\mesh$ of some domain $\Omega$.
Recall that $\matr {A}^\local$, $\matr {A}^\nonlocal$ are the Laplacians on all of $\Omega$ meaning that
\begin{align*}
  \matr A^\local \vec u &= \vec f - \matr A^\local_{I, \localBoundGiven} \vec g_{\localBoundGiven}
  &\text{and}&&
  \matr A^\nonlocal \vec u &= \vec f - \matr A^\nonlocal_{I, (\interactionGiven)} \vec g_{\interactionGiven}
\end{align*}
if we neglect the difference in discretization errors.
The discetization errors can be zero since low degree polynomials might be in the finite element space.

Using the alternate form of the splice formulation \cref{eqn:spliced-alternate}, we easily see that 
\begin{align*}
   \matr A^S \vec u &= \restr[\local]^T \restr[\local] \matr A^L \vec u + \restr[\nonlocal]^T \restr[\nonlocal] \matr A^N \vec u \\
   &= \restr[\local]^T \restr[\local] \left(\vec f - \matr A^\local_{I, \localBoundGiven} \vec g_{\localBoundGiven}\right)+ \restr[\nonlocal]^T \restr[\nonlocal] \left(\vec f - \matr A^\nonlocal_{I, (\interactionGiven)} \vec g_{\interactionGiven} \right)  \\
   &= \vec f -  \restr[\local]^T \restr[\local]\matr A^\local_{I, \localBoundGiven} \vec g_{\localBoundGiven} -  \restr[\nonlocal]^T \restr[\nonlocal] \matr A^\nonlocal_{I, (\interactionGiven)} \vec g_{\interactionGiven} = \vec{h}^{S}.
 \end{align*} 
The terms involving the boundary conditions $\vec g_{\localBoundGiven}$, $\vec g_{\interactionGiven}$ correspond exactly to the treatment of boundary conditions in the splice method \cref{eqn:splice-rhs}.
Thus the splice method passes the patch test.

\section{Numerical Results}
\label{sec:numerical-results}

We now present numerical results in \(d=1\) and \(d=2\) dimensions of applying the splice LtN method.
We show patch tests, comparisons against optimization-based coupling and typical applications settings with localized nonlocal behavior for static and time-dependent problems.
Furthermore, we also provide comparison to the optimization-based coupling to show that indeed the optimization objective function will be zero. 
We also showcase the ease of implementation by considering a fractional heat equation with time-dependent forcing terms.

All the examples are implemented using PyNucleus \cite{Glusa2021_PyNucleus}.
PyNucleus is a finite element code that implements efficient assembly and solution of nonlocal and local equations.

\subsection{Patch Tests}

\paragraph{1D Patch Tests}
Let us verify that the splice coupling indeed passes patch tests, and corresponds to the optimization problem with 0 as its objective minimum.
Let $\Omega = (-1,1)$, and consider a fractional kernel \eqref{eq:fracKernel} with horizon $\delta = 0.1$ and fractional order $s = 0.75$.

We choose $\Omega_\local = (-1, 0)$ and use a mesh with mesh size size $h = 0.05$.
This corresponds to a left-right splitting of the domain.

We consider the following two settings for patch tests:
\begin{itemize}
\item forcing \(f=0\) and boundary conditions $u(-1) = -1$ and \(u(x)=x\) for \(x\in[1,1+\delta)\) with analytic solution \(u(x)=x\), and
\item forcing \(f=-2\) and boundary conditions $u(-1) = 1$ and \(u(x)=x^{2}\) for \(x\in[1,1+\delta)\) with analytic solution \(u(x)=x^{2}\), and
\end{itemize}
The solutions resulting from the splice LtN coupling using \(\mathbb{P}_{1}-\mathbb{P}_{1}\) discretization are plotted in \cref{fig:1d-patch-splice}.
The errors are effectively zero in both cases. 

We employ the same problem configurations and solve the optimization problem \eqref{eqn:discrete-opt}.
We use the BFGS minimizer from the \texttt{scipy} library with an initial guess of 0 for the undetermined coefficients.
In \cref{tab:1d-opt-stats}, we show the value of the objective function \cref{eqn:discrete-opt} and the number of iterations arising from BFGS.
we observe that the objective function has essentially a value of zero at the minimum, in accordance with \Cref{thm:main-theorem}.
Note that since the objective function in \cref{eqn:discrete-opt} is quadratic, the pointwise mismatch between local and nonlocal solution is roughly of order $10^{-6}$.
This is confirmed in \cref{fig:1d-patch-error} where we plot the difference between the solution of the splice method and the solution of the optimization problem.
The difference is within the optimization tolerance.

\begin{table}[ht]
    \centering
    \begin{tabular}{|c|cc|}
    \hline
         &  Objective Function Value &BFGS Iterations \\
         \hline
        Linear & 3.833e-13& 7\\ %  2.3763135914998683e-13 
        Quadratic &  2.378e-13 & 6\\ %2.377931993199103e-13
        \hline
    \end{tabular}
    \caption{Statistics of the optimization procedure on the patch tests for 1D, \(\mathbb{P}_{1}-\mathbb{P}_{1}\) discretizations. 
    }
    \label{tab:1d-opt-stats}
\end{table}

\begin{figure}
     \centering
     \begin{subfigure}[b]{0.48\textwidth}
         \centering
         \includegraphics[width=\textwidth]{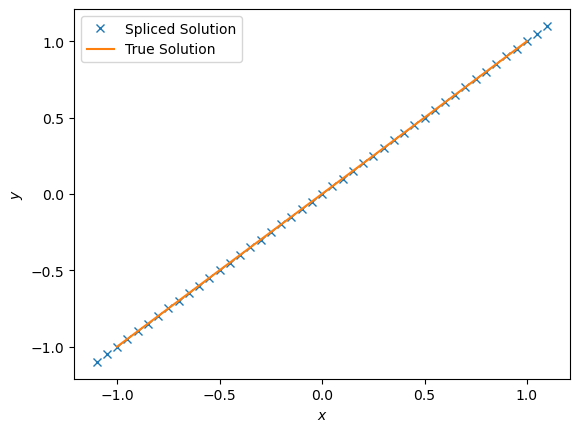}
         \caption{Linear Patch Test Solution}
     \end{subfigure}
     \begin{subfigure}[b]{0.48\textwidth}
         \centering
         \includegraphics[width=\textwidth]{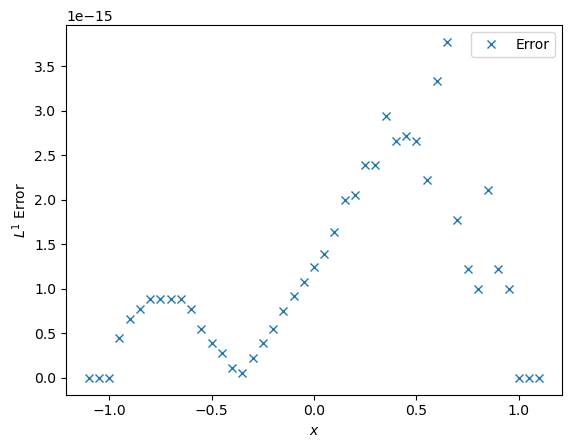}
         \caption{Linear Patch Test Error}
     \end{subfigure} \\
        \begin{subfigure}[b]{0.48\textwidth}
         \centering
         \includegraphics[width=\textwidth]{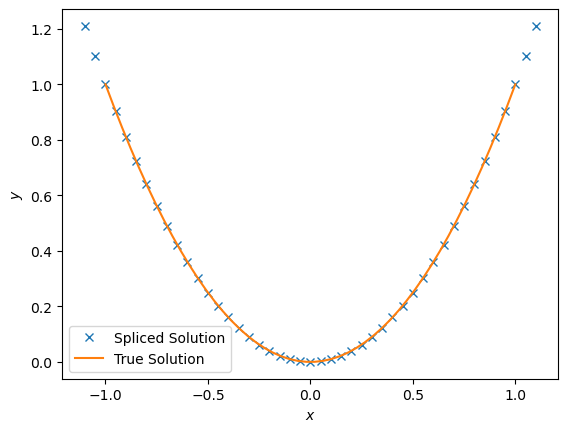}
         \caption{Quadratic Patch Test Solution}
     \end{subfigure}
     \begin{subfigure}[b]{0.48\textwidth}
         \centering
         \includegraphics[width=\textwidth]{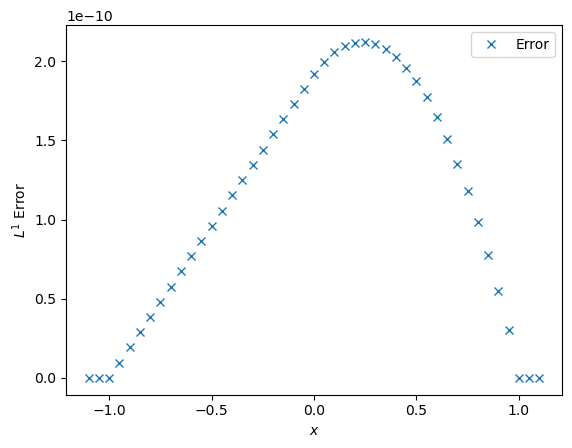}
         \caption{Quadratic Patch Test Error}
     \end{subfigure} \\  
    \caption{Patch tests for the splice method and \(\mathbb{P}_{1}-\mathbb{P}_{1}\) discretization. We recover the analytic solution.}
    \label{fig:1d-patch-splice}
\end{figure}

\begin{figure}
    \centering
     \centering
     \begin{subfigure}[b]{0.48\textwidth}
         \centering
         \includegraphics[width=\textwidth]{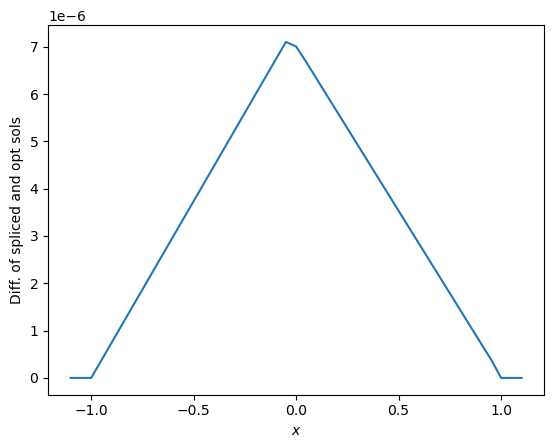}
         \caption{Linear}
     \end{subfigure}
     \begin{subfigure}[b]{0.48\textwidth}
         \centering
         \includegraphics[width=\textwidth]{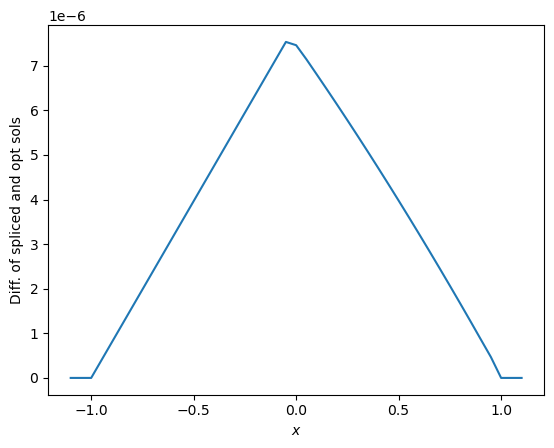}
         \caption{Quadratic}
     \end{subfigure} 
     \caption{
       Difference between solutions obtained using the splice and optimization methods for \(\mathbb{P}_{1}-\mathbb{P}_{1}\) discretization.
       Since the final objective function value is around $10^{-12}$, we expect the pointwise error to be roughly $10^{-6}$ which is what we observe.
       The two differences for the linear and quadratic cases are very similar. 
     }
    \label{fig:1d-patch-error}
\end{figure}

We also perform the patch test for the non-matching $\mathbb{P}_0 -\mathbb{P}_1$ case in one dimension for the same domains, but with different meshes that have been constructed as outlined in \cref{sec:p0-section}.
We also use a fractional kernel of order $s = 0.25$; the horizon remains the same as above.
In \cref{fig:1d-p0-p1-quadratic-patch}, we plot the solutions to the purely nonlocal problem using $\mathbb{P}_0$ elements and the splice LtN coupling.
We see clearly that the splice method again passes the quadratic patch test, as the error in the coupling solution is within the discretization error observed for the fully nonlocal case.

\begin{figure}
    \centering
     \centering
     \begin{subfigure}[b]{0.48\textwidth}
         \centering
         \includegraphics[width=\textwidth]{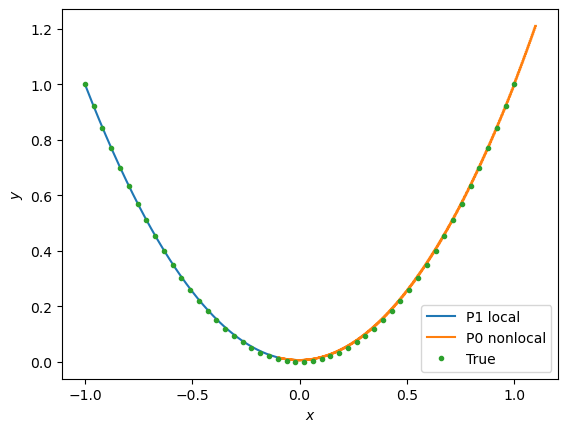}
         \caption{Splice method solution}
     \end{subfigure}
     \begin{subfigure}[b]{0.48\textwidth}
         \centering
         \includegraphics[width=\textwidth]{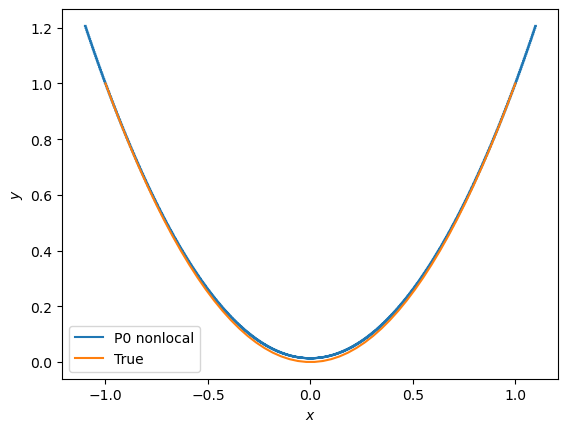}
         \caption{Fully nonlocal solution}
     \end{subfigure} \\
      \begin{subfigure}[b]{0.48\textwidth}
         \centering
         \includegraphics[width=\textwidth]{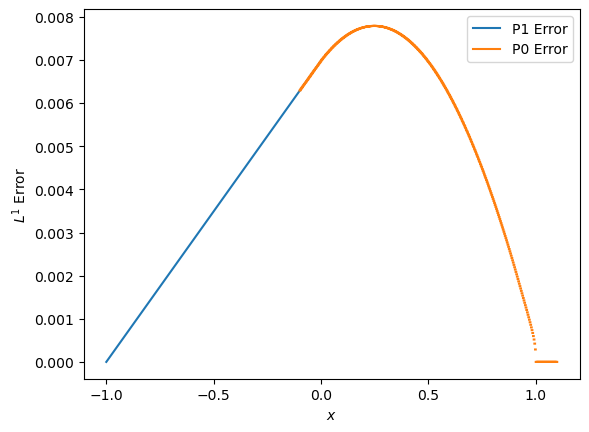}
         \caption{Error for splice method}
     \end{subfigure}
     \begin{subfigure}[b]{0.48\textwidth}
         \centering
         \includegraphics[width=\textwidth]{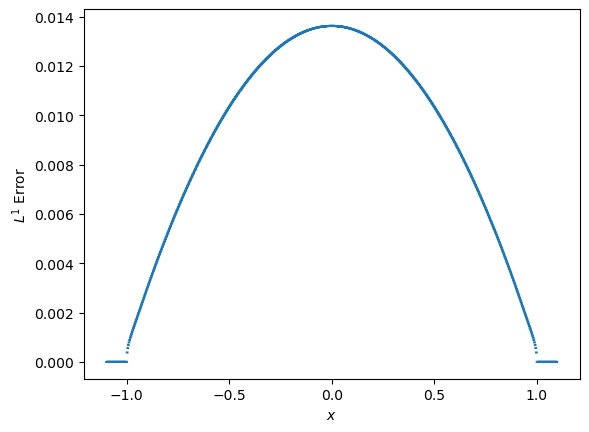}
         \caption{Error for fully nonlocal solution}
     \end{subfigure} \\
    \caption{Plot of the solution and error for the quadratic test for the splice method using a $\mathbb{P}_0-\mathbb{P}_1$ discretization and the fully nonlocal problem using a \(\mathbb{P}_{0}\) discretization.
    }
    \label{fig:1d-p0-p1-quadratic-patch}
\end{figure}

\paragraph{2D Patch Tests}
We perform 2D patch tests in the linear and quadratic cases on the domain $\Omega = (-1, 1)^2$.
We use a fractional kernel with $s = 0.25$ and horizon $\delta = 0.2$.
We consider two different configurations for the splitting into nonlocal/local domain:
\begin{itemize}
\item Left-right: $\Omega_L := (-1, 0) \times (-1, 1)$ splitting the computational domain vertically.
\item Inclusion: $\Omega_L := \Omega  \setminus [-.25, .25]^2$ and the nonlocal domain is contained within the local domain.
\end{itemize}

Let the analytic solution be $u(x, y) = 2(x - 1)^2 - y + 2$ and pick boundary conditions and forcing term accordingly.
A quasi-uniform mesh of $h\approx 0.08$ is used.
The error resulting from the splice coupling in the matching mesh case is shown in \cref{fig:patch-2d} where it can be seen that the errors are in fact smaller than the discretization error incurred in the fully nonlocal case.
It can be observed that the error of the coupling method is concentrated within the nonlocal subdomain.
This is due to quadrature error from the approximation of \(\ell_{2}\) interaction balls in dimension \(d\geq2\).
\begin{figure}
    \centering
    \begin{subfigure}[b]{0.32\textwidth}
        \includegraphics[width=\textwidth]{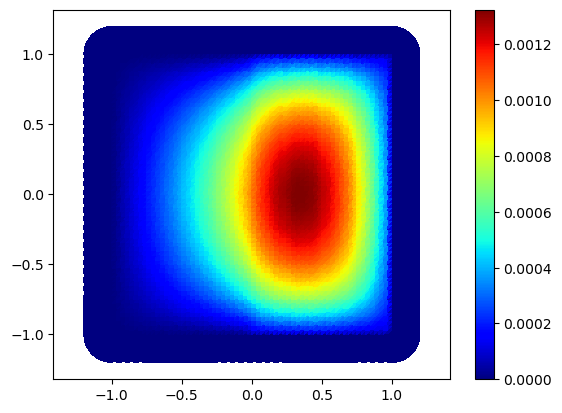}
        \caption{Left-Right}
    \end{subfigure}
    \hfill
    \begin{subfigure}[b]{0.32\textwidth}
        \includegraphics[width=\textwidth]{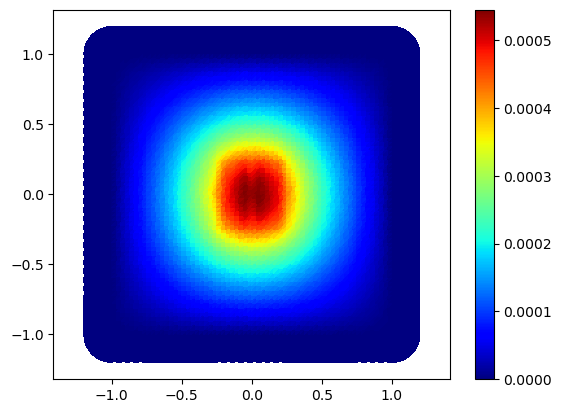}
        \caption{Inclusion}
    \end{subfigure}
      \begin{subfigure}[b]{0.32\textwidth}
        \includegraphics[width=\textwidth]{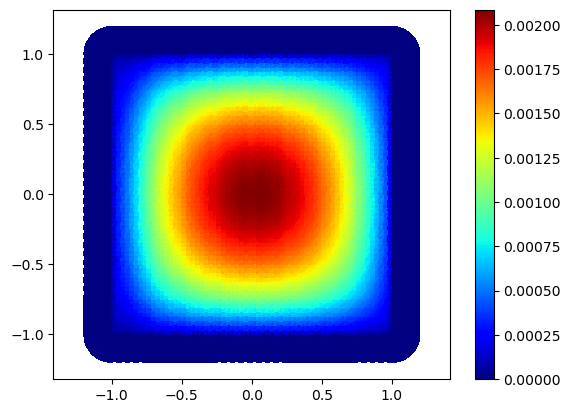}
        \caption{Fully nonlocal}
    \end{subfigure}
    \caption{Error for the 2D patch test for the two different coupling configurations and \(\mathbb{P}_{1}-\mathbb{P}_{1}\) discretization and the fully nonlocal problem with \(\mathbb{P}_{1}\) discretization.}
    \label{fig:patch-2d}
\end{figure}

Results of optimization based coupling using the same configurations are shown in \cref{tab:2d-opt-stats}.
The objective function effectively reaches zero as predicted by \cref{thm:main-theorem}.
In \cref{fig:2d-splice-opt-error}, we show the difference between the solutions of splice and optimization-based LtN coupling.
The different methods are numerically equivalent as the magnitude of the error can be attributed to either optimization error or quadrature error. 

\begin{table}[ht]
    \centering
    \begin{tabular}{|c|cc|}
    \hline
         &  Objective Function Value &BFGS Iterations \\
         \hline
        Left/Right &  5.871e-11 & 30  \\ 
        Inclusion  &  2.192e-10 & 32 \\ 
        \hline
    \end{tabular}
    \caption{Statistics of the optimization procedure on the 2D quadratic patch test on two different domain configuration for \(\mathbb{P}_{1}-\mathbb{P}_{1}\) discretizations}
    \label{tab:2d-opt-stats}
\end{table}

\begin{figure}
    \centering
    \begin{subfigure}[b]{0.32\textwidth}
        \includegraphics[width=\textwidth]{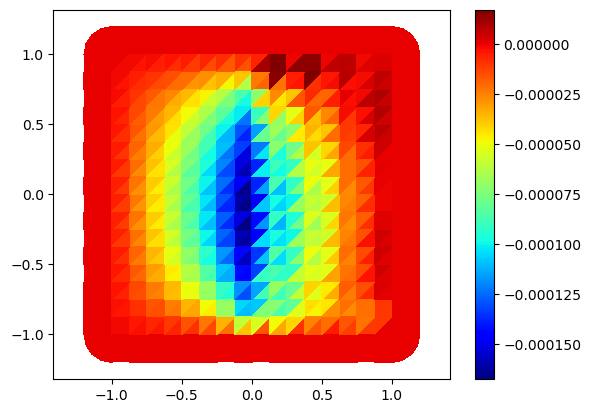}
          \label{Left-Right}
    \end{subfigure}
    \begin{subfigure}[b]{0.32\textwidth}
        \includegraphics[width=\textwidth]{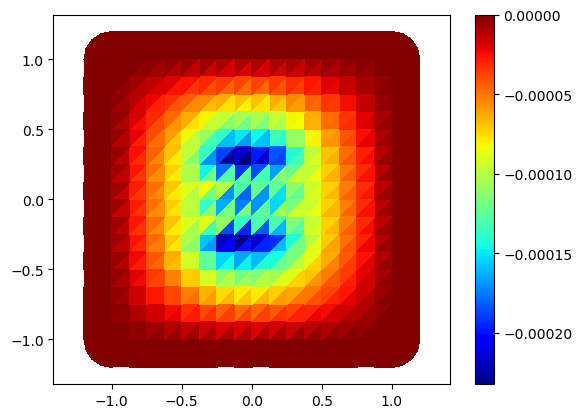}
        \label{Inclusion}
    \end{subfigure}
    \caption{Difference between solutions obtained using the splice and optimization methods for \(\mathbb{P}_{1}-\mathbb{P}_{1}\) discretization.}
    \label{fig:2d-splice-opt-error}
\end{figure}

Finally, we also show the $\mathbb{P}_0-\mathbb{P}_1$ coupling in 2D for the two different configurations. 
We use the same patch test problem, kernel, and domain configurations as before but with a slightly smaller mesh size $h \approx 0.02$.
The errors resulting from the coupling are shown in \cref{fig:patch-2d-p0-p1}.
Note that while the splicing errors are larger than in the matching mesh case, they are smaller than for the fully nonlocal problem.
This behavior is due to the lower rate of convergence of the \(\mathbb{P}_{0}\) discretization.

\begin{figure}
    \centering
    \begin{subfigure}[b]{0.32\textwidth}
        \includegraphics[width=\textwidth]{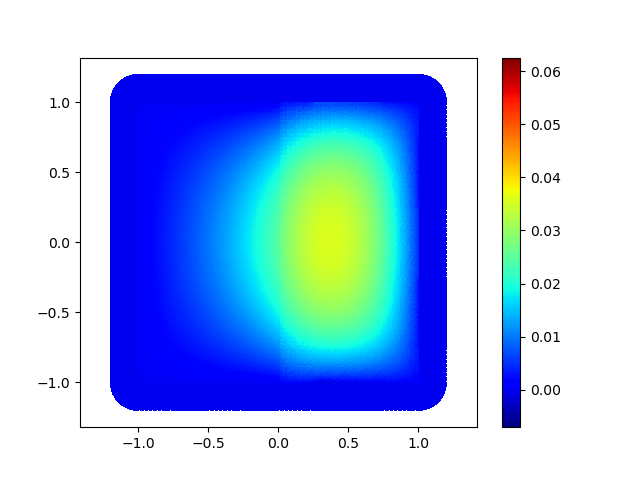}
        \caption{Left-Right}
    \end{subfigure}
    \begin{subfigure}[b]{0.32\textwidth}
        \includegraphics[width=\textwidth]{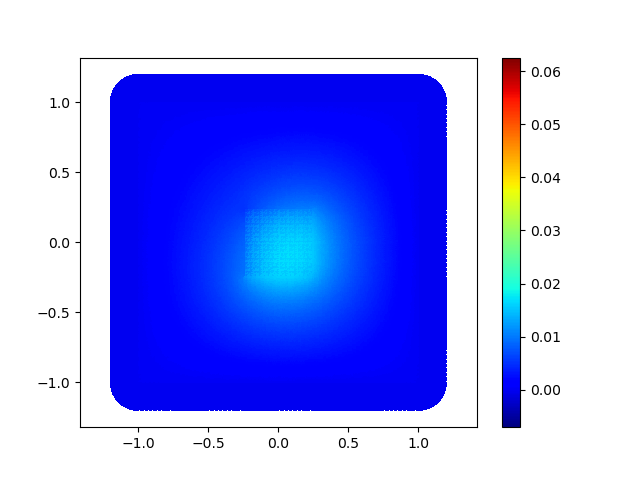}
        \caption{Inclusion}
    \end{subfigure}
      \begin{subfigure}[b]{0.32\textwidth}
        \includegraphics[width=\textwidth]{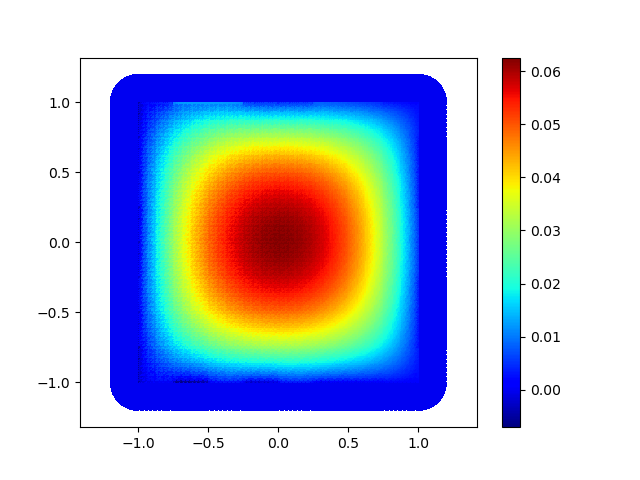}
        \caption{Full nonlocal}
    \end{subfigure}
    \caption{Error for the 2D patch test for the two different coupling configurations and \(\mathbb{P}_{1}-\mathbb{P}_{0}\) discretization and the fully nonlocal problem with \(\mathbb{P}_{0}\) discretization.}
    \label{fig:patch-2d-p0-p1}
\end{figure}

\subsection{Convergence to Local Models}
We turn to the question of whether our splicing method preserves the asymptotic compatibility property where the nonlocal operator $\mathcal{L}_\nonlocal \to \mathcal{L}_\local$ as $\delta \to 0$. 
We consider the \emph{local} problem $-\Delta u = f$ on $\Omega = (-1, 1)$ with forcing function
\begin{align}
  f(x) = \frac{1}{\abs{x}^{1/4}} + \sin(x)
\end{align}
and boundary conditions $u(-1) = -1, u(1) = 1$. 
The key property we wish to replicate is that \cref{eqn:nonlocal-model-full} with the same data will approach the same local solution as $\delta \to 0$. 

For the splicing configuration, we split the domain into left-right configuration where $\Omega_L = (-1, 0)$. 
On the nonlocal domain, we use the fractional kernel with $s = 0.25$ with varying horizon $\delta$. 
In \cref{fig:delta-conv-fig}, we plot the solutions of the splice equation with varying $\delta$ against the local solution.
We observe that the spliced solution converges to the local solution.
This can be seen in greater detail in \cref{fig:delta-conv-error}, where we plot the convergence rate of the $L^2$ error against $\delta$ for both the splicing solution and the fully nonlocal solutions.
Here, we see that the same $\mathcal O(\delta)$ convergence is seen for the spliced solution as if we had used a fully nonlocal method.

\begin{figure}
  \centering
  \begin{subfigure}[b]{0.49\textwidth}
    \includegraphics[width=\textwidth]{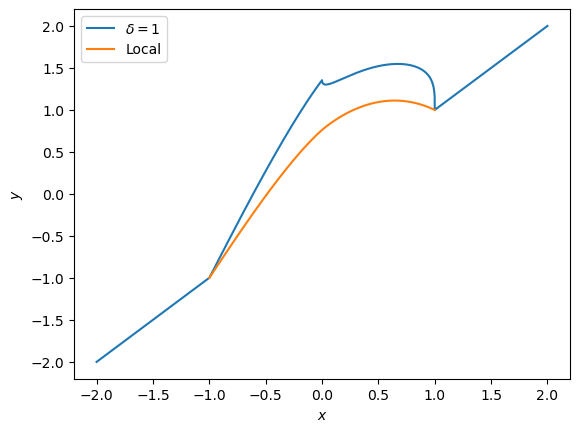} 
    \caption{$\delta = 1$}
  \end{subfigure}
    \begin{subfigure}[b]{0.49\textwidth}
    \includegraphics[width=\textwidth]{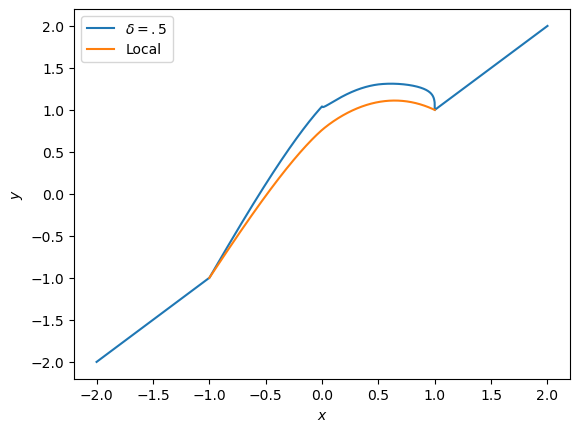} 
    \caption{$\delta = .5$}
  \end{subfigure} \\
  \begin{subfigure}[b]{0.49\textwidth}
    \includegraphics[width=\textwidth]{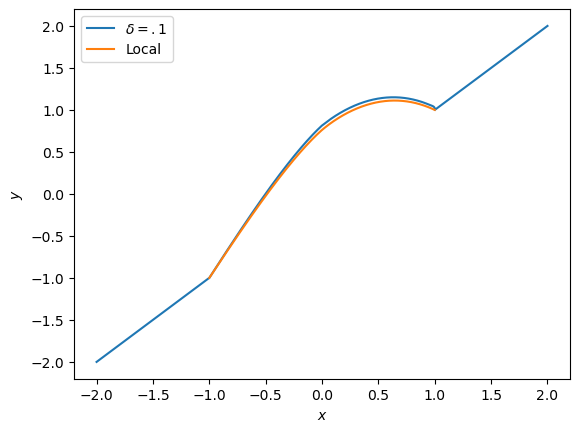} 
    \caption{$\delta = .1$}
  \end{subfigure}
  \begin{subfigure}[b]{0.49\textwidth}
    \includegraphics[width=\textwidth]{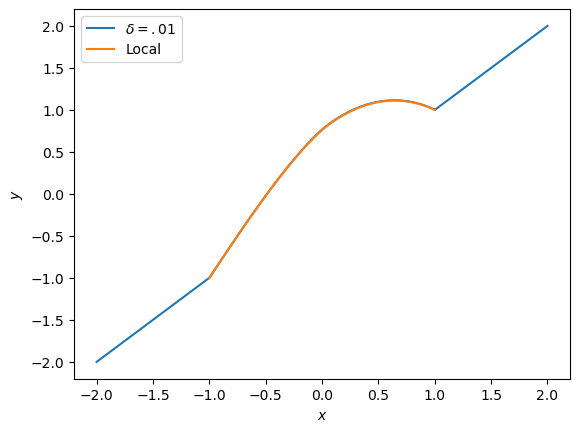} 
    \caption{$\delta = .01$}
  \end{subfigure}
    \caption{Plot of solutions obtained from applying to the splice method with a left-right splitting to increasingly smaller horizon.
    It is clear that the spliced solution approaches the local solution. 
    Note that for simplicity, we use the same mesh.}
    \label{fig:delta-conv-fig}
\end{figure}

\begin{figure}[tb]
  \centering
  \includegraphics[width=.5\textwidth]{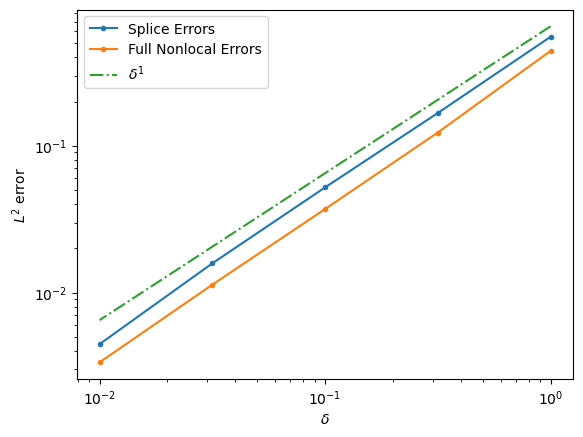}
  \caption{Plot of the $L^2$ errors between the local solution and the spliced solutions with varying $\delta$. 
  We observe a convergence rate of roughly $\delta$.}
  \label{fig:delta-conv-error}
\end{figure}

\subsection{Solutions with Discontinuities}

\paragraph{1D Examples}
We now examine solutions to \cref{eqn:nonlocal-model-full} where discontinuities may occur in a small region with smooth solutions outside that area.
This is a typical use case for LtN coupling.
Consider \cref{eqn:nonlocal-model-full} on $\Omega = (-1, 1)$ with forcing function
\begin{align}    
f(x) = \begin{cases}
        0 & x \in (-1,-\delta)\cup(\delta,1), \\
        -\frac{2}{\delta^2} (-\frac{1}{4}(\log(\delta) - \log(-x)) & x\in [-\delta, 0), \\
        \frac{2}{\delta^2} (\frac{1}{4} (\log (x) - \log(\delta)) & x\in [0,\delta]
    \end{cases}
\end{align}
and the inverse distance kernel of horizon $\delta$.
It can be shown that the analytic solution is
\begin{align}
    u(x) = \begin{cases}
        x + \frac{1}{4} & x < 0, \\
        x & x \ge 0
    \end{cases}
\end{align}
meaning that the solution has a jump at $x = 0$ but is linear otherwise.

We exhibit the effectiveness of the splicing method by letting $\delta = 0.1$ and $\Omega_L = (-1, 1) \setminus [-.25, .25]$ in both the $\mathbb{P}_1-\mathbb{P}_1$ and $\mathbb{P}_0-\mathbb{P}_1$ coupling.
In \cref{fig:sing-1d}, we plot the solution arising from the splice LtN alongside the analytic solution for both the $\mathbb{P}_1-\mathbb{P}_1$ and $\mathbb{P}_0-\mathbb{P}_1$ coupling.
With the exception of some numerical artifacts typical of discontinuous solutions in the $\mathbb{P}_1$ case, we see that the splice method works well. 

\begin{figure}
    \centering
    \begin{subfigure}[t]{0.48\textwidth}
        \includegraphics[width=\textwidth]{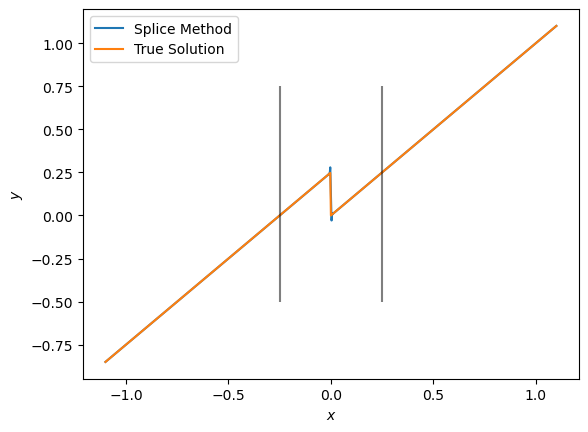}
        \caption{Solution using $\mathbb{P}_1-\mathbb{P}_1$ coupling; bars indicate the nonlocal region. }
    \end{subfigure}
    \hfill
    \begin{subfigure}[t]{0.48\textwidth}
        \includegraphics[width=\textwidth]{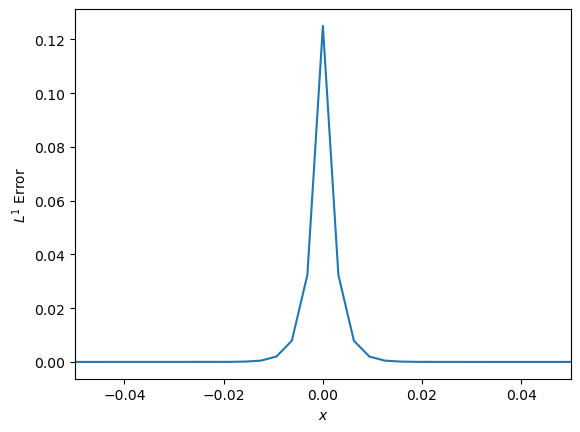} 
        \caption{$\mathbb{P}_1-\mathbb{P}_1$ Error}
    \end{subfigure}\\
        \begin{subfigure}[t]{0.48\textwidth}
        \includegraphics[width=\textwidth]{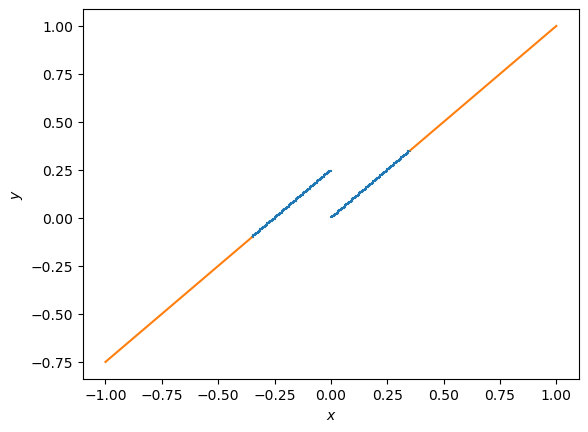}
        \caption{Solution using $\mathbb{P}_0-\mathbb{P}_1$ coupling; blue/orange indicates $\mathbb{P}_0, \mathbb{P}_1$ respectively.}
    \end{subfigure}
    \hfill
    \begin{subfigure}[t]{0.48\textwidth}
        \includegraphics[width=\textwidth]{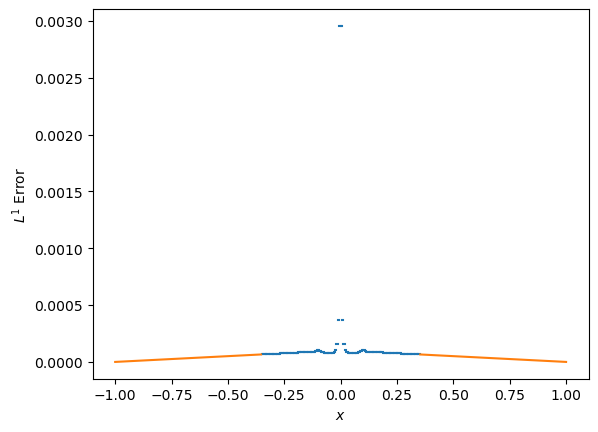}
        \caption{$\mathbb{P}_0-\mathbb{P}_1$ Error}
    \end{subfigure}
    \caption{Figure of 1D singular solutions and errors.}
    \label{fig:sing-1d}
\end{figure}

\paragraph{2D Examples}
Let $\vec{x}^\star \in \mathbb{R}^2$ be some arbitrary point, and $r > 0$ a chosen parameter.
Consider the following forcing function 
\begin{align}    
f(\vec{x}) = c_\delta\begin{cases}
        0 & \norm{\vec{x} - \vec{x}^\star} > r + \delta \\
        A(\delta, r, \norm{\vec{x} - \vec{x}^\star}) - \pi \delta^2 & \norm{\vec{x} - \vec{x}^\star} < r \\
        A(\delta, r, \norm{\vec{x} - \vec{x}^\star}) & \text{otherwise}
    \end{cases}
\end{align}
where $c_\delta$ a scaling constant, $A(R,  r, \norm{\vec{x} - \vec{x}^\star})$ is the area of the intersection between two circles of radii $R, r$ and centers $\norm{\vec{x} - \vec{x}^\star}$ away
\begin{align*}
    A(R, r, d) := \begin{cases}
        \pi \min(R, r)^2 & \abs{R - r} > d\\
        k(R, r, d) & R + r > d \\
        0 & \text{otherwise}
    \end{cases}
\end{align*}
with 
\begin{align*}
  k(R, r, d) &= r^2 \arccos\left( \frac{d^2 + r^2 - R^2}{2dr} \right) + R^2 \arccos\left( \frac{d^2 + R^2 - r^2}{2dR} \right) \\
  &\qquad- \frac{1}{2} \sqrt{(-d + r +R)(d + r - R)(d - r + R)(d + r + R)}.
\end{align*}
It is easy to show that the true solution is simply a cylinder indicator
\begin{align}
    u(\vec{x}) = \chi_{B_{r}(\vec{x}^\star)}(\vec{x}).
\end{align}

We let $\vec{x}^\star = (0, 0)$, $r = 0.2$ and $\delta = 0.25$.
For the splicing domains, we first show the effects of varying the size of $\Omega_N$ on the resulting solution. 
In \cref{fig:2dsing-window}, we show three different sizes of $\Omega_N = \{(-.2, .2)^2,  (-.3, .3)^2, (-.4, .4)^2\}$.
The first two windows are clearly too small, failing to capture the correct magnitude of the jump and introducing additional artifacts.
However, the last case, even though the domain is smaller than the sum of the radius of discontinuity radius of the kernel, effectively captures the solution.
This is reflected in \cref{fig:2dsing-error} where we show the effect of changing the window size on the error.
Intuitively, this makes sense: we expect that we need to capture a region of width \(\delta\) around the discontinuity surface at \(\norm{\vec{x}}=r\) using the nonlocal domain, meaning that all nonlocal effects should be captured by nonlocal domains \((-a,a)^{2}\) for \(a\geq0.45\).

\begin{figure}
  \centering
  \begin{subfigure}[b]{0.4\textwidth}
    \includegraphics[width=\textwidth]{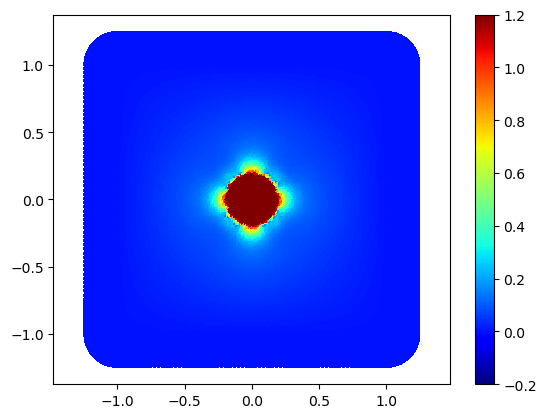}
    \caption{$\Omega_N = (-.2, .2)^2$}
    % \label{fig:sub1}
  \end{subfigure}%
  \begin{subfigure}[b]{0.4\textwidth}
    \includegraphics[width=\textwidth]{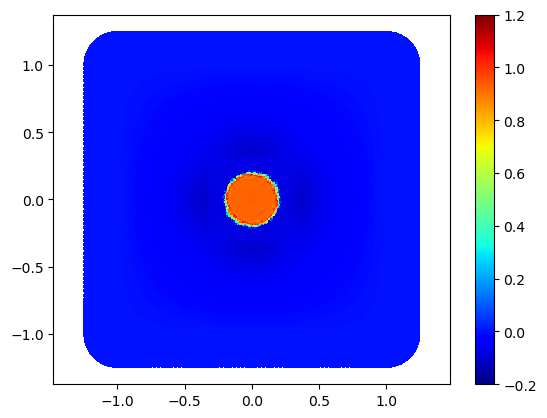}
    \caption{$\Omega_N = (-.3, .3)^2$}
    % \label{fig:sub2}
  \end{subfigure}
  \begin{subfigure}[b]{0.4\textwidth}
    \includegraphics[width=\textwidth]{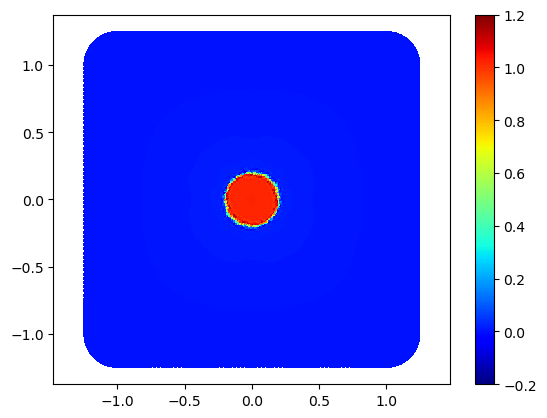}
    \caption{$\Omega_N = (-.4, .4)^2$}
    % \label{fig:sub3}
  \end{subfigure}%
  \begin{subfigure}[b]{0.4\textwidth}
    \includegraphics[width=\textwidth]{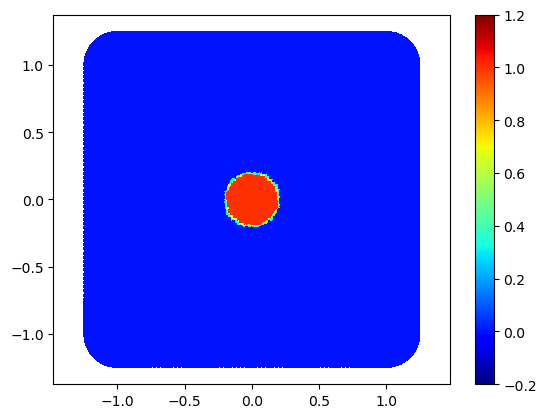}
    \caption{Analytic solution}
    % \label{fig:sub4}
  \end{subfigure}
  \caption{Plots of the solutions computed using varying $\Omega_N$ sizes.
  Note that if the window is too small, the discontinuities are not captured correctly and additional artifacts are introduced.}
  \label{fig:2dsing-window}
\end{figure}

\begin{figure}
  \centering
  \includegraphics[width=.5\textwidth]{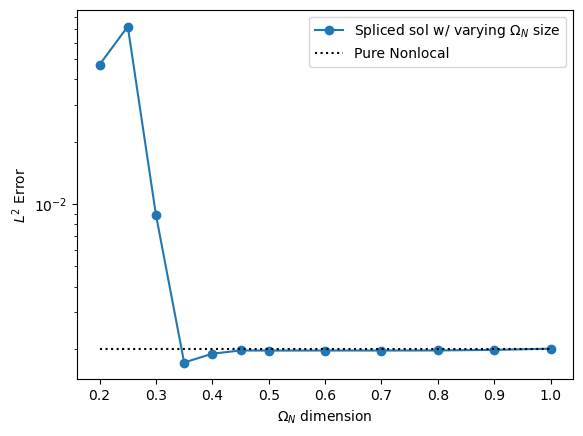}
  \caption{
    Plot showing effect of window size on $L^2$ error.
    The $x$-axis is half of the dimension of $\Omega_N$ (e.g. a value of 0.2 corresponds to $\Omega_N = (-0.2, 0.2)^2$).
    Note that at $\Omega_N = (-0.35, 0.35)^2$, the spliced solution is smaller than the one with pure nonlocal due to numerical issues associated with sharp jumps. 
  }
  \label{fig:2dsing-error}
\end{figure}

\subsection{A Time Dependent Problem}
One considerable benefit hinted above for the splice LtN coupling is its computational cost.
Since no optimization needs to be done or intrusive alteration of the code, one can easily incorporate the splicing method into time-dependent problems. 

We consider the nonlocal heat equation with a forcing term
\begin{align}\label{eqn:time-dynamics}
  u_t - 0.1 \mathcal{L}_N u &= f
\end{align}
on $\Omega = (-1, 1)^2$ with homogeneous Dirichlet volume conditions.
We use a constant kernel with horizon $\delta = 0.2$.
The forcing function is 
\begin{align}\label{eqn:time-forcing}
  f(\vec{x}, t) = \chi_{B_{0.1}(\overline{\vec{x}}(t))}(\vec{x})
\end{align}
where $\overline{\vec{x}}(t) = \left(\frac{1}{2}\cos(t), \frac{1}{2} \sin(t)\right)$, meaning the forcing term is simply a ball of radius $0.1$ moving in a circular fashion around the domain.
Let the initial condition be $u(\vec{x}, 0) = \chi_{B_{0.1}(\overline{\vec{x}}(0))}(\vec{x})$.

We use a simple backward Euler discretization in time, resulting in the linear system
\begin{align}\label{eqn:time-stepping-full}
  \left(\vec M + 0.1\Delta t \lapl\right) \vec u^{n+1} = \vec M \vec u^n + \Delta t \vec f,
\end{align}
where $\vec M$ is the classical mass matrix.
For this particular example, we let $\Delta t = 0.1$ and perform the calculations on a mesh with $h \approx 0.04$ resulting in a mesh with 16,129 dofs in the interior.
In order to obtain a reference solution, we uniformly refine the mesh twice and solve the fully nonlocal problem with 261,121 dofs.
The solution of \cref{eqn:time-dynamics} results in a moving circular region with mild singularities; see \cref{fig:time-full} for plots of the solution computed using \cref{eqn:time-stepping-full} the full nonlocal matrix on the fine mesh. 

\begin{figure}
  \centering
  \begin{subfigure}[b]{0.4\textwidth}
    \includegraphics[width=\textwidth]{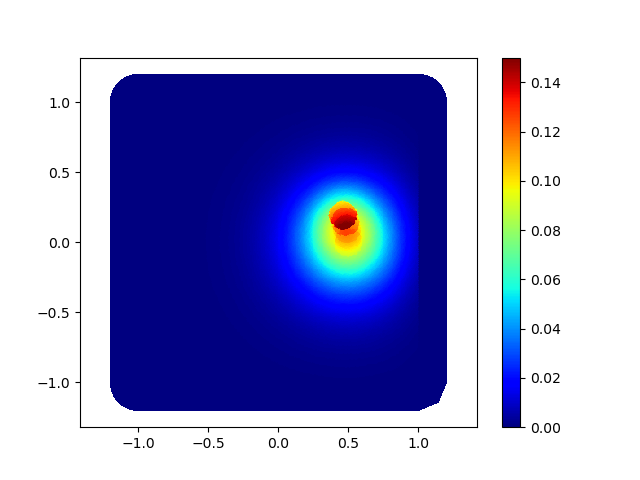}
    \caption{$t = .5$}
    % \label{fig:sub1}
  \end{subfigure}%
  \begin{subfigure}[b]{0.4\textwidth}
    \includegraphics[width=\textwidth]{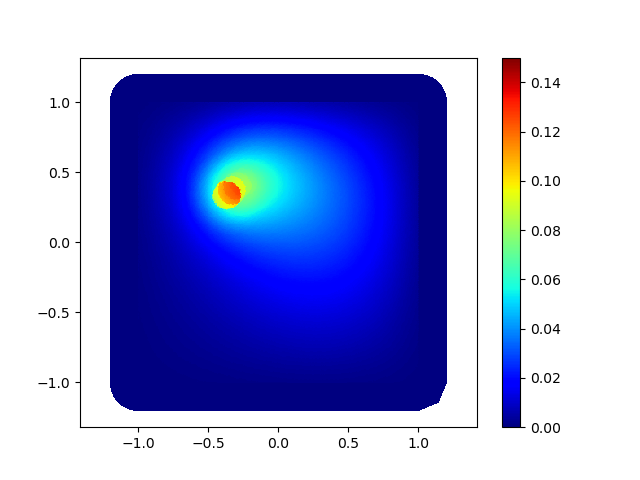}
    \caption{$t = 2.5$}
    % \label{fig:sub2}
  \end{subfigure}
  \begin{subfigure}[b]{0.4\textwidth}
    \includegraphics[width=\textwidth]{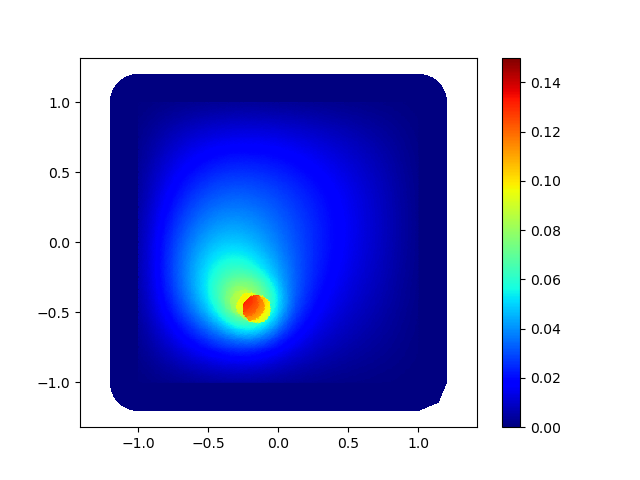}
    \caption{$t = 4.5$}
    % \label{fig:sub3}
  \end{subfigure}%
  \begin{subfigure}[b]{0.4\textwidth}
    \includegraphics[width=\textwidth]{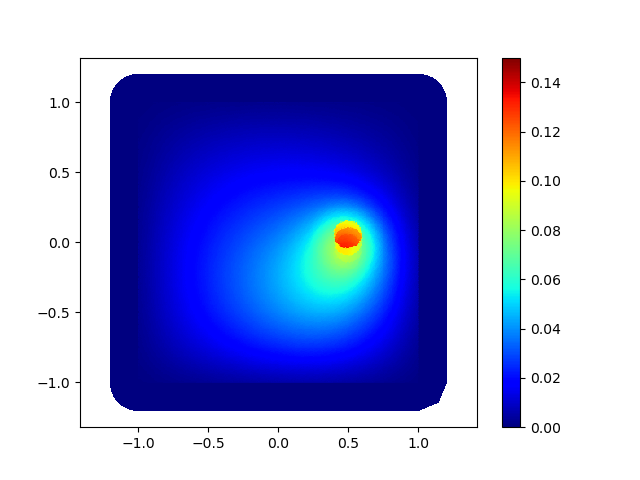}
    \caption{$t = 6.5$}
    % \label{fig:sub4}
  \end{subfigure}
  \caption{Plots of the solutions computed using the full nonlocal Laplacian.}
  \label{fig:time-full}
\end{figure}

We consider three different methods of using the splice LtN coupling for such problems.
The first is to track the region where the forcing is nonzero, and utilize a local Laplacian for the remaining domain.
Given \cref{eqn:time-forcing} and the value of \(\delta\), we set the nonlocal domain as
\begin{align*}
  \Omega_\nonlocal := \overline{\vec{x}}(t) + (-0.3,0.3)^{2}
\end{align*}
consisting of a square with side length \(0.6\) centered at $\overline{\vec{x}}(t)$.
In the second approach, we use a moving domain as in the first approach, but augment it with an \emph{internal} boundary layer
\begin{align*}
  \Omega_\nonlocal := (-1,1)\setminus(-1+\delta,1-\delta)^{2} \cup \left(\overline{\vec{x}}(t) + (-0.3,0.3)^{2}\right).
\end{align*}
This is motivated by noticing that the errors arising from this particular problem lie near the boundary.
The last approach consists of fixing the nonlocal domain.
This results in a larger nonlocal region, but with the simplicity of having to solve the same linear system in each time step.
In this case, we choose
\begin{align*}
  \Omega_\nonlocal = \left(-.8 , .8 \right)^2 \setminus  \left[-.2 , .2 \right]^2
\end{align*}
resulting in a square, annular region.
Regardless of the approach, the resulting LtN time-stepping scheme would be
\begin{align}\label{eqn:time-stepping-splice}
  \left(\vec M + \frac{\Delta t }{10}\matr A^S \right) \vec u^{n+1} = \vec M \vec u^n + \Delta t \vec f
\end{align}
where $\matr A^S$ is the splicing matrix.
In the case of time-dependent coupling regions \(\matr A^S\) changes from time step to time step.

In \cref{fig:time-err}, we plot the $L^2$-error between the reference solution and the three splice approaches and the fully nonlocal equation.
The performance of the constant window and the simple moving window are quite similar. 
Surprisingly, the introduction of a internal boundary layer drastically reduces the error.
This suggests that the choice of nonlocal domains in the splitting is a nontrivial endeavor as the forcing function is in fact zero near the boundary.
The appropriate choice of nonlocal/local domains is generally not trivial; this is the subject of future work.

We plot the actual $L^1$-error for a specific time in \cref{fig:time-error-comparison}; note that for the simple window and constant window examples, a significant error lies around the boundary.
Nevertheless, examining the plot of the solution at the final time $t = 10$ in \cref{fig:time-last-comparison}, we see that the jump is clearly defined for all the splicing methods.

\begin{figure}[tb]
  \centering
  \includegraphics[width=.5\textwidth]{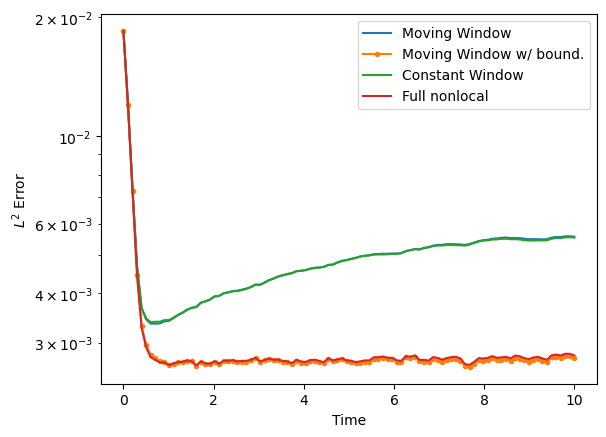}
  \caption{Plot of the $L^2$ error with respect to time for the three different splicing configurations and the full nonlocal method.
  The moving and constant window splicing method have roughly the same errors, though adding a internal boundary layer greatly reduces the error. 
  }
  \label{fig:time-err}
\end{figure}

\begin{figure}
  \centering
  \begin{subfigure}[t]{0.4\textwidth}
    \includegraphics[width=\textwidth]{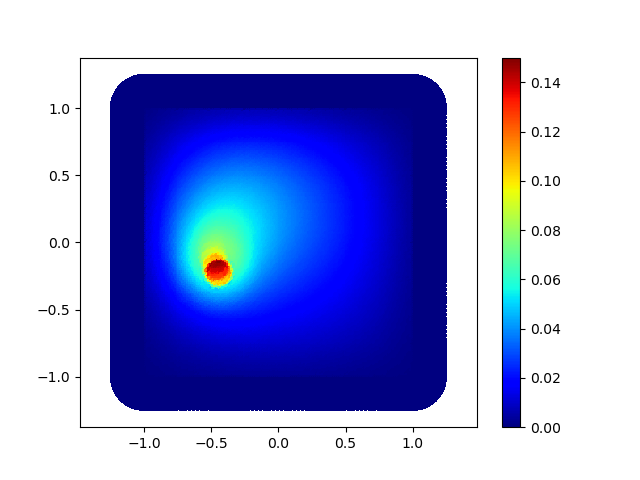}
    \caption{Fully nonlocal}
  \end{subfigure}%
  \begin{subfigure}[t]{0.4\textwidth}
    \includegraphics[width=\textwidth]{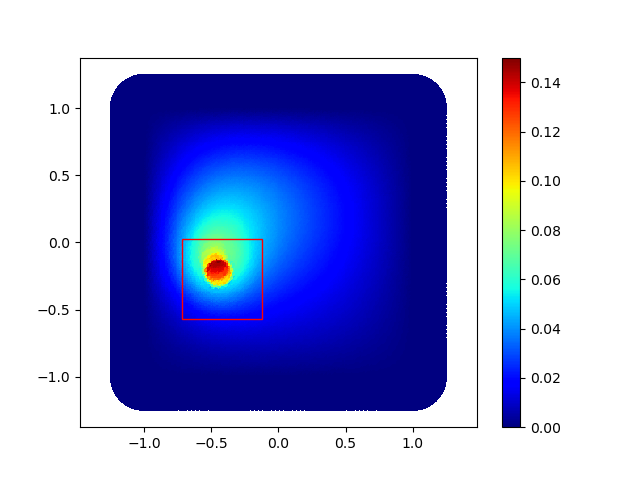}
    \caption{splice, moving domains}
  \end{subfigure} \\% 
  \begin{subfigure}[t]{0.4\textwidth}
    \includegraphics[width=\textwidth]{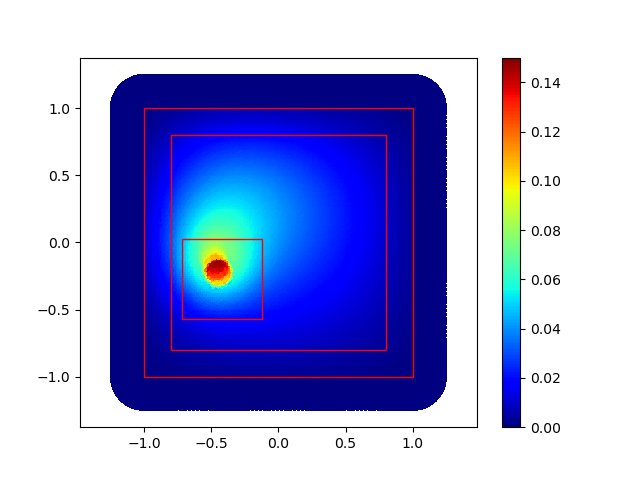}
    \caption{splice, moving domain with boundary layer}
  \end{subfigure}%
    \begin{subfigure}[t]{0.4\textwidth}
    \includegraphics[width=\textwidth]{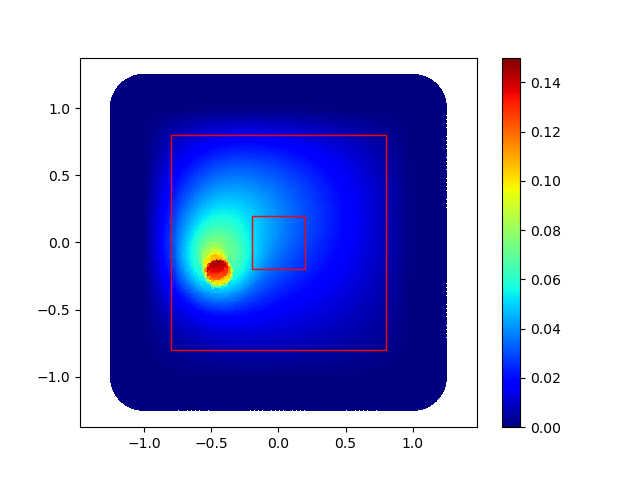}
    \caption{splice, constant domain}
  \end{subfigure}%
  \caption{Plots of the solutions at \(t=10\).
  The red outlines indicate the nonlocal domain $\Omega_\nonlocal$.}
  \label{fig:time-last-comparison}
\end{figure}

\begin{figure}
  \centering
  \begin{subfigure}[t]{0.4\textwidth}
    \includegraphics[width=\textwidth]{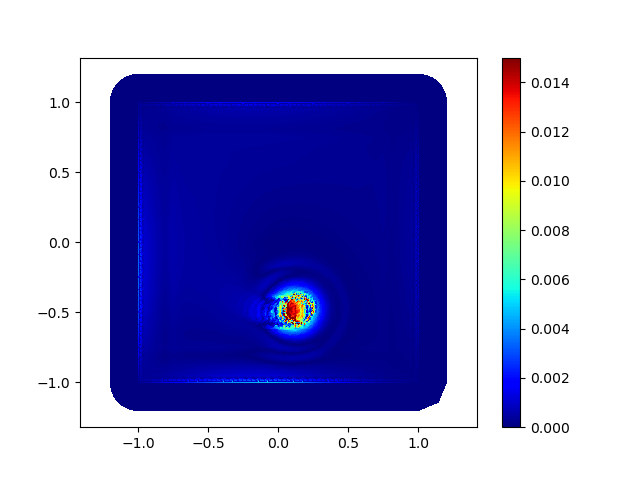}
    \caption{Fully nonlocal}
  \end{subfigure}%
  \begin{subfigure}[t]{0.4\textwidth}
    \includegraphics[width=\textwidth]{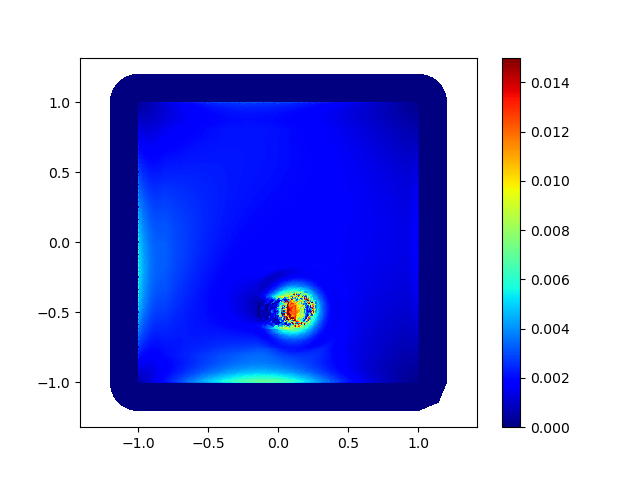}
    \caption{splice, moving domains}
  \end{subfigure} \\% 
  \begin{subfigure}[t]{0.4\textwidth}
    \includegraphics[width=\textwidth]{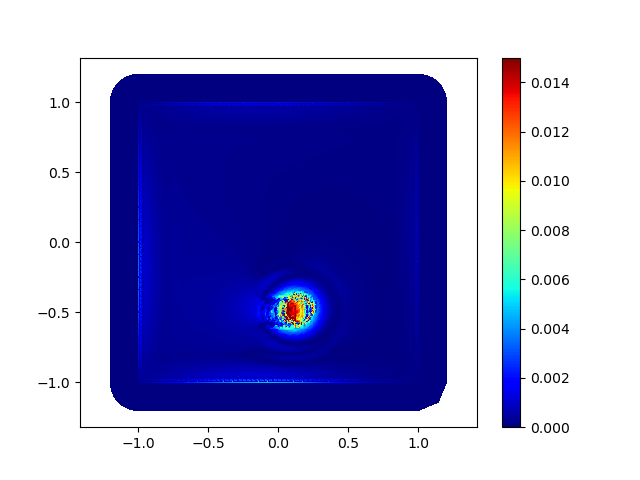}
    \caption{splice, moving domain with boundary layer}
  \end{subfigure}%
    \begin{subfigure}[t]{0.4\textwidth}
    \includegraphics[width=\textwidth]{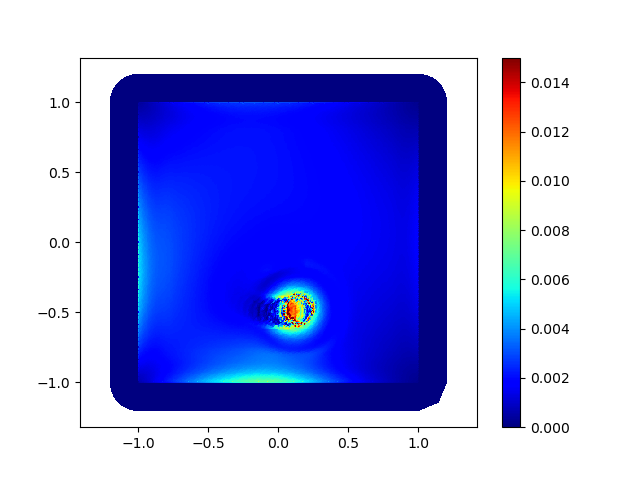}
    \caption{splice, constant domain}
  \end{subfigure}%
  \caption{Plots of the $L^1$ error at \(t=5\).
  The red outlines indicate the nonlocal domain $\Omega_\nonlocal$. 
  The error congregates around where the forcing term is nonsmooth, and the boundary.}
  \label{fig:time-error-comparison}
\end{figure}

\section{Conclusion}

We have presented a splice method for coupling nonlocal and local diffusion models in weak form.
The analysis shows that the coupling is well posed at the discrete level and inherits properties from the optimization-based approach.
The method is straightforward to implement and the arising system of equations is easier to solve than the related more general optimization-based coupling.
A sequence of numerical examples have illustrated that the method passes patch tests and is quite adaptable to different use cases.
The final time dependent test case has led to an important question.
While the method we presented allows the coupling of arbitrary splittings into local and nonlocal domains, it is not obvious a priori how to choose a specific splitting that will minimize the error while being computationally efficient.
We will address this question in future work.

\section{Funding}

Sandia National Laboratories is a multimission laboratory managed and operated by National Technology \& Engineering Solutions of Sandia, LLC, a wholly owned subsidiary of Honeywell International Inc., for the U.S. Department of Energy’s National Nuclear Security Administration under contract DE-NA0003525.

This paper describes objective technical results and analysis. Any subjective views or opinions that might be expressed in the paper do not necessarily represent the views of the U.S. Department of Energy or the United States Government.

SAND No: SAND2024-04697O

\appendix

\section{Technical Proofs}
\subsection{Proof of \texorpdfstring{\cref{thm:main-theorem}}{Existence Theorem} }
We proceed much as \cite{DEliaBochev2021_FormulationAnalysisComputationOptimization,d2016coupling} and decompose the nonlocal and local equations into discrete harmonic and homogeneous components:
\begin{align}
  \vec u_\nonlocal(\vec \theta_\nonlocal) := \vec v_\nonlocal(\vec \theta_\nonlocal) + \vec w_\nonlocal,\qquad 
  \vec u_\local(\vec \theta_\local) := \vec v_\local(\vec \theta_\local) + \vec w_\local
\end{align}
where $\vec v_\nonlocal, \vec v_\local$ are the discrete harmonics which solves
\begin{align*}
  \lapl[\nonlocal, \nonlocal] \vec v_N(\vec \theta_N) = - \lapl[\nonlocal, (\interactionSolve)] \vec \theta_N, \qquad \lapl[\local, \local] \vec v_\local(\vec \theta_\local) = -  \lapl[\local, \localBoundSolve] \vec \theta_\local
\end{align*}
while $\vec w_{\cdot}$ is the forcing terms, which are independent of the control functions
\begin{align*}
  \lapl[\nonlocal, \nonlocal] \vec w_N(\vec \theta_N) = \vec f_\nonlocal -  \lapl[\nonlocal, \interactionGiven] \vec g_{\interactionGiven}, \qquad \lapl[\local, \local] \vec w_\local(\vec \theta_\local) =  \vec f_\local - \lapl[\local, \localBoundGiven] \vec g_{\localBoundGiven}.
\end{align*}
For notational purposes, we will use the shorthand $\vec v_{\cdot} := \vec v_{\cdot}(\vec \theta_{\cdot})$.

We also define the continuous decomposition: for $\theta_{\local} \in H^{1/2}_{0}(\localBoundSolve)$, and 
$\theta_{\nonlocal} \in \{ u \in \operatorname{trace}_{\Omega_\interactionSolve}V(\Omega_{\nonlocal}\cup\Omega_{\interactionSolve}\cup\Omega_{\interactionGiven} ) \mid u|_{\Omega_\interactionGiven} = 0\}$,
\begin{align*}
  u_\nonlocal = v_\nonlocal(\theta_\nonlocal) + w_\nonlocal, \qquad u_\local = v_\local(\theta_\local) + w_\local
\end{align*}
where
\begin{align*}
    \left\{
    \begin{aligned}
      -\mathcal{L}_\nonlocal v_{\nonlocal}(\vec x) &= 0, && \vec x \in \Omega_{\nonlocal}, \\
      v_\nonlocal(\vec x) &= \theta_\nonlocal(\vec x), && \vec x \in \Omega_{\interactionSolve}, \\
      v_\nonlocal(\vec x) &= 0,  && \vec x \in\Omega_{\interactionGiven},
    \end{aligned}
    \right.
    \qquad
      \left\{
\begin{aligned}
    -\mathcal{L}_\local  v_\local(\vec x) &= 0, && \vec x \in \Omega_{\local}, \\
     v_\local(\vec x) &=  \theta_\local(\vec x), && \vec x \in \localBoundSolve, \\
     v_\local(\vec x) &= 0, && \vec x \in \localBoundGiven.
\end{aligned}
\right.
\end{align*}
and
\begin{align*}
    \left\{
    \begin{aligned}
      -\mathcal{L}_\nonlocal w_{\nonlocal}(\vec x) &= f(\vec x), && \vec x \in \Omega_{\nonlocal}, \\
      w_\nonlocal(\vec x) &= 0, && \vec x \in \Omega_{\interactionSolve}, \\
      w_\nonlocal(\vec x) &= 0,  && \vec x \in\Omega_{\interactionGiven},
    \end{aligned}
    \right.
    \qquad
      \left\{
\begin{aligned}
    -\mathcal{L}_\local  w_\local(\vec x) &= f(\vec x), && \vec x \in \Omega_{\local}, \\
     w_\local(\vec x) &= 0, && \vec x \in \localBoundSolve, \\
     w_\local(\vec x) &= 0, && \vec x \in \localBoundGiven.
\end{aligned}
\right.
\end{align*}

Before proceeding, we state the necessary preliminaries that we need, starting with maximum principle results regarding the local/nonlocal Laplacians. 
\begin{lemma}[Strong Local Maximum Principle]\label{thm:strong-max-local-lap} %[H\"older regularity and strong maximum principle for the local problem]
  Let \(\theta_{\local}\in C^{0}(\localBoundSolve)\) and assume that \(\Omega_{\local}\) satisfies an exterior cone condition.
  Then \(v_{\local}\in C^{\infty}(\Omega_{\local})\cap C(\overline{\Omega_{\local}})\).
  If \(v_{\local}\) attains an extremum at a point in \(\Omega_{\local}\) then \(v_{\local}\) is constant.
\end{lemma}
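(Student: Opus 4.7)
The plan is to verify the three claims — interior smoothness, continuity up to the boundary, and the strong maximum principle — in turn. All three are classical facts for harmonic functions, so most of the work lies in checking that the hypotheses of standard results are met in the present setup, where \(v_{\local}\) solves \(-\Delta v_{\local}=0\) in \(\Omega_{\local}\) with boundary data \(\theta_{\local}\) on \(\localBoundSolve\) and \(0\) on \(\localBoundGiven\).

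First I would handle interior regularity. Since \(v_{\local}\) is a weak solution of \(\Delta v_{\local}=0\) on the open set \(\Omega_{\local}\), Weyl's lemma (or, equivalently, interior elliptic regularity applied to the constant-coefficient Laplacian) immediately gives \(v_{\local}\in C^{\infty}(\Omega_{\local})\); in fact \(v_{\local}\) is real-analytic, but we only need smoothness.

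Next, for continuity up to the boundary, I would first glue the two boundary pieces together. The prescribed data is \(\theta_{\local}\) on \(\localBoundSolve\) and \(0\) on \(\localBoundGiven\); these agree at the interface \(\partial\localBoundSolve\cap\partial\localBoundGiven\) because the paper has taken \(\theta_{\local}\in H^{1/2}_{0}(\localBoundSolve)\), which encodes vanishing at \(\partial\localBoundSolve\). Thus the combined boundary datum is in \(C^{0}(\partial\Omega_{\local})\). The exterior cone condition at each boundary point \(x_{0}\in\partial\Omega_{\local}\) lets us construct a local barrier (the classical cone barrier built from a homogeneous harmonic function in the complement of the cone), so every boundary point is regular in the sense of potential theory. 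By the Perron–Wiener–Brelot construction, or the barrier argument as in Gilbarg–Trudinger, the unique harmonic extension is continuous on \(\overline{\Omega_{\local}}\).

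For the strong maximum principle itself, I would use the mean value property. Suppose \(v_{\local}\) attains an interior maximum at \(x^{*}\in\Omega_{\local}\), and let \(S:=\{x\in\Omega_{\local}\mid v_{\local}(x)=v_{\local}(x^{*})\}\). By continuity \(S\) is relatively closed in \(\Omega_{\local}\). For openness, given \(x\in S\) pick \(r>0\) with \(B_{r}(x)\subset\Omega_{\local}\); the mean value property gives \(v_{\local}(x)=\fint_{B_{r}(x)}v_{\local}\,dy\), and since \(v_{\local}\leq v_{\local}(x^{*})\) everywhere while equality holds on average, \(v_{\local}\equiv v_{\local}(x^{*})\) on \(B_{r}(x)\). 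Hence \(S\) is clopen, and on every connected component containing a maximizer \(v_{\local}\) is constant; the minimum case follows by applying the same argument to \(-v_{\local}\).

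The main obstacle is essentially cosmetic: there is no hard PDE analysis to do, only a careful verification of compatibility of the data. The point to be careful about is the behavior of \(\theta_{\local}\) at \(\partial\localBoundSolve\cap\partial\localBoundGiven\), which the trace class \(H^{1/2}_{0}(\localBoundSolve)\) chosen in the paper handles automatically. A minor caveat is that if \(\Omega_{\local}\) is not connected (as can happen for the inclusion geometry of \cref{fig:simple-domain}), the strong maximum principle must be interpreted on each connected component individually.
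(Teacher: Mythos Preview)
Your proposal is correct and follows essentially the same route as the paper's own proof, which simply cites Gilbarg--Trudinger Theorem~6.13 for the existence of a solution in \(C^{2}(\Omega_{\local})\cap C(\overline{\Omega_{\local}})\) under the exterior cone condition, Evans for the upgrade to \(C^{\infty}(\Omega_{\local})\), and Evans again for the strong maximum principle. Your version spells out the content of those references (Perron/barrier construction, Weyl's lemma, mean-value argument) and is more explicit about the boundary-data compatibility at \(\partial\localBoundSolve\cap\partial\localBoundGiven\), but the substance is identical.
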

\begin{proof}
  \cite[Theorem 6.13]{GilbargTrudinger2001} states that \(v_{\local}\in C^{2}(\Omega_{\local})\cap C(\overline{\Omega_{\local}})\).
  \cite[\S 6.3.1, Theorem 3]{evans2022partial} gives \(v_{\local}\in C^{\infty}(\Omega_{\local})\).
  The statement of the strong maximum principle can be found in \cite[\S 6.4.2, Theorem 3]{evans2022partial}.
\end{proof}

\begin{lemma}[Weak maximum principle for the nonlocal problem]\label{thm:max-nonlocal-lap}
  Suppose the symmetric kernel $\gamma(\vec x, \vec y)$ satisfies
  \begin{enumerate}
    \item Two integrability conditions
    \begin{align*}
      \sup_{\vec x \in \mathbb{R}^n} \int_{\mathbb R^n} \min(1, \abs{\vec x - \vec y}^2) \gamma(\vec x, \vec y) \, d\vec y < \infty, 
    \end{align*}
    and 
    \begin{align*}
      \sup_{\vec x \in \mathbb{R}^n} \int_{\mathbb R^n} \min(1, \abs{\vec z}^2) \abs{\gamma(\vec x, \vec x + \vec z) - \gamma(\vec x, \vec x - \vec z)} \, d\vec z < \infty, 
    \end{align*}
    \item Nontriviality condition: for all \(r\), the function \(j(\vec z) = \operatorname{essinf} \{\gamma(\vec x, \vec x \pm \vec y)  \mid \vec x \in \mathbb{R}^n \} \) does not vanish identically on $B_r(\vec 0)$. 
  \end{enumerate}
  Then, $v_\nonlocal$ satisfies the weak maximum principle
  \begin{align*}
    \sup_{\vec{x}\in\Omega_{\nonlocal}}v_{\nonlocal}(\vec{x}) \leq \max\{0, \sup_{\vec{x}\in\Omega_{\interactionSolve}}\theta_{\nonlocal}(\vec{x})\}.
  \end{align*}
  where the suprema are taken in the almost everywhere sense.
\end{lemma}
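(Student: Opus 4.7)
The plan is to use an energy argument --- testing the weak formulation satisfied by \(v_{\nonlocal}\) against its own excess above the boundary bound --- rather than a pointwise propagation-of-maximum argument. Define \(K := \max\{0, \operatorname{esssup}_{\Omega_{\interactionSolve}} \theta_{\nonlocal}\}\) and the candidate test function \(w := (v_{\nonlocal}-K)^{+}\). Because \(v_{\nonlocal}=\theta_{\nonlocal}\leq K\) a.e.\ on \(\Omega_{\interactionSolve}\) and \(v_{\nonlocal}=0\leq K\) on \(\Omega_{\interactionGiven}\), the function \(w\) vanishes on \(\Omega_{\interactionSolve}\cup\Omega_{\interactionGiven}\), so it is an admissible test function for the weak form underlying \eqref{eqn:weak-nonlocal}:
\[
B(v_{\nonlocal},\phi) := \tfrac{1}{2}\iint_{\mathbb{R}^{2d}} \bigl(v_{\nonlocal}(\vec{x})-v_{\nonlocal}(\vec{y})\bigr)\bigl(\phi(\vec{x})-\phi(\vec{y})\bigr)\gamma(\vec{x},\vec{y})\,d\vec{x}\,d\vec{y} = 0.
\]

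Next I would invoke the elementary pointwise inequality
\[
(a-b)\bigl((a-K)^{+}-(b-K)^{+}\bigr) \geq \bigl((a-K)^{+}-(b-K)^{+}\bigr)^{2}, \qquad a,b \in \mathbb{R},
\]
which follows by case analysis on whether each of \(a,b\) lies above or below \(K\). Integrating against the non-negative symmetric kernel \(\gamma\) yields \(B(v_{\nonlocal},w) \geq B(w,w) = \tfrac{1}{2}\opnorm{w}^{2}_{\Omega_{\nonlocal}\cup\Omega_{\interactionSolve}\cup\Omega_{\interactionGiven}}\), so combining with \(B(v_{\nonlocal},w)=0\) forces \(\opnorm{w}=0\). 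Hence \(w(\vec{x})=w(\vec{y})\) for almost every pair \((\vec{x},\vec{y})\) with \(\gamma(\vec{x},\vec{y})>0\).

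The concluding step uses the nontriviality condition on \(j(\vec{z})\) to propagate the vanishing of \(w\) from the boundary region into \(\Omega_{\nonlocal}\). For the kernels in \eqref{eq:fracKernel} and \eqref{eq:integrableKernel} the positivity set of \(\gamma(\vec{x}_{0},\cdot)\) contains all of \(B_{\delta}(\vec{x}_{0})\); thus any \(\vec{x}_{0}\in\Omega_{\nonlocal}\) within distance \(\delta\) of \(\Omega_{\interactionSolve}\cup\Omega_{\interactionGiven}\) has a positive-measure set of neighbors on which \(w\) already vanishes, forcing \(w(\vec{x}_{0})=0\) a.e. Iterating inward through layers of thickness of order \(\delta\) exhausts the bounded domain \(\Omega_{\nonlocal}\) in finitely many steps. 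The two integrability hypotheses enter to guarantee that \(B\) is well-defined on the energy space and to control the principal-value integrand.

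The main obstacle I anticipate is technical rather than conceptual: confirming that \(w=(v_{\nonlocal}-K)^{+}\) lies in the admissible energy space \(V(\Omega_{\nonlocal}\cup\Omega_{\interactionSolve}\cup\Omega_{\interactionGiven})\) when the kernel is singular (a standard consequence of Lipschitz stability of the norm, but requiring care for fractional kernels), and making the propagation step fully rigorous for geometries with non-trivial topology such as the inclusion configuration of \cref{fig:simple-domain}. Under only the weak form of the nontriviality hypothesis, one may need an iterated argument rather than a single propagation across layers of thickness \(\delta\).
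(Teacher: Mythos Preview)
Your argument is correct in outline and is essentially the Stampacchia truncation method, adapted to the nonlocal bilinear form. The paper, by contrast, does not carry out any energy estimate at all: it simply shifts \(v_{\nonlocal}\) down by \(\Lambda:=\sup_{\Omega_{\interactionSolve}\cup\Omega_{\interactionGiven}}(\theta_{\nonlocal})_{+}\) so that the translated function \(\bar v_{\nonlocal}=v_{\nonlocal}-\Lambda\) has nonpositive data on the whole interaction region, and then invokes the ready-made weak maximum principle of Jarohs--Weth~\cite[Thm.~2.4]{JarohsWeth2019_StrongMaximumPrincipleNonlocalOperators}, whose hypotheses are exactly the two integrability conditions and the nontriviality condition in the lemma statement. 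Adding \(\Lambda\) back gives the claimed bound.

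The difference is therefore one of packaging. The paper's route is a two-line reduction to a literature result; your route is a self-contained proof that, in effect, reproduces the core of what Jarohs--Weth establish. What you gain is transparency---one sees explicitly how the kernel's sign and the pointwise inequality \((a-b)\bigl((a-K)^{+}-(b-K)^{+}\bigr)\ge\bigl((a-K)^{+}-(b-K)^{+}\bigr)^{2}\) drive the conclusion---at the cost of having to justify the two technical points you flagged: closure of the energy space under truncation (standard, via the 1-Lipschitz property of \(t\mapsto(t-K)^{+}\)) and the propagation of \(w\equiv\text{const}\) across the domain from \(\opnorm{w}=0\). The latter is where the nontriviality hypothesis is genuinely used, and under the hypothesis as stated (only that \(j\) does not vanish identically on each \(B_{r}(\vec 0)\)) your layer-of-thickness-\(\delta\) picture is too optimistic; one needs the finer chaining argument you allude to at the end, which is precisely what the cited reference supplies.
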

\begin{remark}
  By definition $v_\nonlocal$ has homogeneous zero boundary condition in $\Omega_{\interactionGiven}$, hence the 0 in the above inequality. 
\end{remark}
\begin{remark}
  The above assumptions are fairly weak, and are trivially satisfied by the kernels \eqref{eq:fracKernel} and \eqref{eq:integrableKernel} that we used for the numerical examples.
\end{remark}
\begin{proof}
  Let $\bar v_{\nonlocal} = v_{\nonlocal} - \Lambda$ where $\Lambda:= \sup_{\vec{x}\in\Omega_{\interactionSolve} \cup \Omega_{\interactionGiven}} \theta_{\nonlocal}(\vec{x})_{+}\geq 0$ which also implies that $\theta_N(\vec x) - \Lambda \le 0$.
  In particular, if $\theta_N(\vec x) \le 0$ for all $\vec x \in \Omega_{\interactionSolve}$, then $\Lambda = 0$ and $\bar v_{\nonlocal} = v_{\nonlocal}$.
  The function $\bar v_{\nonlocal}$ is a weak solution to
  \begin{equation*}
  \left\{
    \begin{aligned}
      -\mathcal{L}_\nonlocal \bar v_{\nonlocal}(\vec x) &= 0, && \vec x \in \Omega_{\nonlocal}, \\
      \bar v_{\nonlocal}(\vec x) &= \theta_\nonlocal(\vec x) - \Lambda, && \vec x \in \Omega_{\interactionSolve}, \\
      \bar v_{\nonlocal}(\vec x) &= -\Lambda,  && \vec x \in\Omega_{\interactionGiven},
    \end{aligned}
    \right.
\end{equation*}
  % \begin{align*}
  %   - \mathcal L_\nonlocal \bar v &= 0, \qquad \vec x \in \Omega_\nonlocal \\
  %   \bar v &= \theta_\nonlocal(\vec x) + k, \qquad \vec x \in \Omega_{\interactionSolve} \\
  %   \bar v &= 0, \qquad \vec x \in \Omega_{\interactionGiven}
  % \end{align*}
  Note that on $\Omega_{\interactionSolve}\cup\Omega_{\interactionGiven}$, $\bar v_{\nonlocal} \le 0$.
  Thus, using the assumptions on the kernel and applying \cite[Thm. 2.4]{JarohsWeth2019_StrongMaximumPrincipleNonlocalOperators}, we have that $\bar v_{\nonlocal} \le 0$ everywhere meaning that
  \begin{align*}
    \sup_{\Omega_\nonlocal} v_{\nonlocal} - \Lambda = \sup_{\Omega_\nonlocal} \bar v_{\nonlocal} \le 0 = \max\{0, \sup_{\Omega_{\interactionSolve} \cup \Omega_{\interactionGiven}} v_{\nonlocal} - \Lambda\}
  \end{align*}
  Finally, adding $\Lambda$ to both sides and using that \(v_{\nonlocal}=0\) on \(\Omega_{\interactionGiven}\) gives the result.
\end{proof}

We begin with a strengthened Cauchy-Schwarz inequality, which is slightly different from that of Lemma 4.3 of \cite{DEliaBochev2021_FormulationAnalysisComputationOptimization}:
\begin{lemma}[Continuous Strengthened Cauchy-Schwarz inequality]
    Let \(\theta_{\local} \in \Theta_L\) and 
    % \(\theta_{\nonlocal} \in \{ v \in L^2(\Omega_{\interactionSolve}) \mid \exists u \in V(\Omega_{\nonlocal}\cup\Omega_{\interactionSolve}\cup\Omega_{\interactionGiven} ), u|_{\Omega_\interactionSolve} = v \}\),
    \(\theta_{\nonlocal} \in \operatorname{trace}_{\Omega_{\interactionSolve}} V(\Omega_{\nonlocal}\cup\Omega_{\interactionSolve}\cup\Omega_{\interactionGiven} )\),
  then there exists a constant \(0<c<1\) such that
  \begin{align*}
    \abs{\left(v_{\local}, v_{\nonlocal} \right)_{L^{2}(\Omega_{b})}} &\leq c \norm{v_{\local}}_{L^{2}(\Omega_{b})} \norm{v_{\nonlocal}}_{L^{2}(\Omega_{b})}.
  \end{align*}
\end{lemma}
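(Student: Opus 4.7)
The plan is a compactness-and-contradiction argument in the spirit of Lemma 4.3 of \cite{DEliaBochev2021_FormulationAnalysisComputationOptimization} (see also \cite{d2016coupling}), adapted from the energy-inner-product setting to the $L^{2}$-inner-product setting. Suppose for contradiction that no uniform $c<1$ works. Then by homogeneity one can extract sequences $(\theta_{\local}^{n},\theta_{\nonlocal}^{n})$ with associated harmonic extensions $v_{\local}^{n},v_{\nonlocal}^{n}$ satisfying $\norm{v_{\local}^{n}}_{L^{2}(\Omega_{b})}=\norm{v_{\nonlocal}^{n}}_{L^{2}(\Omega_{b})}=1$ and $(v_{\local}^{n},v_{\nonlocal}^{n})_{L^{2}(\Omega_{b})}\to 1$.

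The next step is to extract strongly convergent subsequences in $L^{2}(\Omega_{b})$. For the local harmonic extensions, interior smoothness from \cref{thm:strong-max-local-lap} combined with Rellich compactness on compactly contained subsets of $\Omega_{\local}$ yields a subsequence converging strongly in $L^{2}(\Omega_{b})$ to some $v_{\local}^{\star}$, which itself solves the local harmonic problem with limiting boundary data $\theta_{\local}^{\star}$ on $\localBoundSolve$ and $v_{\local}^{\star}=0$ on $\localBoundGiven$. For the nonlocal side, an analogous compactness needs to be obtained for the nonlocal harmonic extension operator into $L^{2}(\Omega_{b})$; for the fractional and integrable kernels under consideration this follows from standard fractional-Sobolev compactness results for the energy space $V$, yielding a limit $v_{\nonlocal}^{\star}$ solving the nonlocal harmonic problem with limiting boundary data $\theta_{\nonlocal}^{\star}$ on $\Omega_{\interactionSolve}$ and $v_{\nonlocal}^{\star}=0$ on $\Omega_{\interactionGiven}$. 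Continuity of the inner product and the Cauchy--Schwarz equality case then force $v_{\local}^{\star}=v_{\nonlocal}^{\star}$ a.e.\ on $\Omega_{b}$, with both limits of unit $L^{2}(\Omega_{b})$ norm.

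The main obstacle --- and what I expect to spend most of the effort on --- is turning this equality into a contradiction. On $\Omega_{\interactionSolve}\subset\Omega_{b}$ the identity $v_{\local}^{\star}=v_{\nonlocal}^{\star}$ together with $v_{\nonlocal}^{\star}|_{\Omega_{\interactionSolve}}=\theta_{\nonlocal}^{\star}$ pins down $\theta_{\nonlocal}^{\star}$ as the interior trace of $v_{\local}^{\star}$, and on the overlap $\Omega_{\nonlocal}\cap\Omega_{\local}$ (when nonempty) the two extensions must coincide. Combining this with $v_{\local}^{\star}=0$ on $\localBoundGiven$ and $v_{\nonlocal}^{\star}=0$ on $\Omega_{\interactionGiven}$, my plan is to exploit analyticity/unique continuation for local harmonic functions --- $v_{\local}^{\star}$ is real analytic on $\Omega_{\local}$ by \cref{thm:strong-max-local-lap} --- in conjunction with the nonlocal weak maximum principle of \cref{thm:max-nonlocal-lap} applied to $v_{\nonlocal}^{\star}$, to propagate the vanishing boundary/volume data through both subdomains and conclude $v_{\local}^{\star}\equiv v_{\nonlocal}^{\star}\equiv 0$. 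This contradicts the unit norms and yields the existence of a uniform $c<1$.

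The compactness and subsequence-extraction steps are fairly routine given the cited regularity theory; the genuinely delicate ingredient is the geometric bookkeeping in the contradiction step, where one must carefully track how the boundary data that vanish on $\localBoundGiven$ and $\Omega_{\interactionGiven}$ combine with the enforced equality on $\Omega_{b}$, and apply the strong local and weak nonlocal maximum principles in a sequence that forces the two limits to vanish. I also expect some care to be needed in handling the distinct geometric configurations (left--right splitting versus nonlocal inclusion) within a single unified argument.
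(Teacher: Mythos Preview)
Your compactness-and-contradiction framework matches the paper, which likewise defers the extraction of the extremal pair $v_\local^\star,v_\nonlocal^\star$ to \cite{DEliaBochev2021_FormulationAnalysisComputationOptimization} and then invokes the Cauchy--Schwarz equality case to obtain $v_\local^\star = k\, v_\nonlocal^\star$ on $\Omega_b$.

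The gap is in how you close the contradiction. Analyticity and unique continuation have nothing to bite on here: there is no open set on which either limit is known to vanish a priori --- only the boundary datum on $\localBoundGiven$ and the volume datum on $\Omega_{\interactionGiven}$ vanish, and neither lies in the interior of a region where you have an equation. So ``propagating the vanishing boundary/volume data'' is not a mechanism unique continuation supplies. You do mention applying the strong local and weak nonlocal maximum principles ``in a sequence'' at the end, and those are indeed the right tools, but not via propagation of zeros.

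The paper's argument is instead a short chain of inequalities that loops back on itself. Assuming $v_\local^\star$ is not constant, one has
\begin{align*}
\sup_{\localBoundSolve} v_\local^\star = \sup_{\localBoundSolve} k v_\nonlocal^\star \stackrel{\localBoundSolve\subset\Omega_\nonlocal}{\leq} \sup_{\Omega_\nonlocal} k v_\nonlocal^\star \stackrel{\text{Lem.~\ref{thm:max-nonlocal-lap}}}{\leq} \sup_{\Omega_{\interactionSolve}} k v_\nonlocal^\star = \sup_{\Omega_{\interactionSolve}} v_\local^\star \stackrel{\text{Lem.~\ref{thm:strong-max-local-lap}}}{<} \sup_{\localBoundSolve} v_\local^\star.
\end{align*}
The strict final inequality is the crux: it holds because $\Omega_{\interactionSolve}$ sits in the \emph{interior} of $\Omega_\local$, separated from $\localBoundSolve$ by the $\mathcal{O}(h)$ overlap, so the strong local maximum principle forbids the interior supremum from equalling the boundary supremum. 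That geometric separation is what drives the contradiction --- analyticity plays no role.
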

\begin{remark}
  Note that the local control is from the space of finite element functions on the trace, meaning we are still allowed to apply \cref{thm:strong-max-local-lap}.
  The harmonic extension is taken in the continuous sense and not in discrete manner.
\end{remark}
\begin{proof}
  The proof proceeds exactly as \cite[Lemma 4.3]{DEliaBochev2021_FormulationAnalysisComputationOptimization} by using contradiction.
  In particular, if the strengthened result does not hold, then there exists $v_\local^*$, $v_\nonlocal^*$ such that
  \begin{align*}
    \abs{\left(v_{\local}^*, v_{\nonlocal}^* \right)_{L^{2}(\Omega_{b})}} = \norm{v_{\local}^*}_{L^{2}(\Omega_{b})} \norm{v_{\nonlocal}^*}_{L^{2}(\Omega_{b})}
  \end{align*}
  meaning that, by equality condition of Cauchy-Schwarz, $v^*_\local = k v_\nonlocal^*$ for some $k \in \mathbb{R}$. 
  We refer the reader to \cite{DEliaBochev2021_FormulationAnalysisComputationOptimization} for details regarding the rigorous construction of $v_\local^*$, $v_\nonlocal^*$.

  Thus, one simply has to show that if $v_\local^* = k v_\nonlocal^*$ for some $k \in \mathbb{R}$, then $v_\local^* = v_\nonlocal^* = 0$.
  Without loss of generality, assume that $\sup_{\Omega_b} v_\local^* \ge 0$.
  By the maximum principle, $v_\local^*$ cannot be constant on $\Omega_b$ unless $v_\local^* = 0$ everywhere.

  Applying the maximum theorems, we see that
  \begin{align*}
    \sup_{\localBoundSolve} v_\local^* &= \sup_{\localBoundSolve} k v_\nonlocal^* \stackrel{\localBoundSolve\subset \Omega_{\nonlocal}}{\le} \sup_{\Omega_\nonlocal} k v_\nonlocal^* \stackrel{\text{Lem.~\ref{thm:max-nonlocal-lap}}}{\le} \sup_{\Omega_\interactionSolve} kv_\nonlocal^* = \sup_{{\Omega}_\interactionSolve} v_\local^* \stackrel{\text{Lem.~\ref{thm:strong-max-local-lap}}}{<} \sup_{\localBoundSolve} v_\local^*
  \end{align*}
  where we note that $\Omega_{N, \interactionSolve}$ and $\Gamma$ are separated by a $\mathcal O(h)$ distance, hence, contradiction; for clarity, we refer to the reader to \cref{fig:max} for a schematic in 1D.
  Hence, $v_\local^*$ must be constant, and hence zero by the prescribed boundary conditions. 
\end{proof}
\begin{figure}[tb]
        \centering
      \begin{tikzpicture}[scale=.8]
          \draw[very thick] (0,0) -- (6.5,0);
    % Nonlocal part
    \draw[draw, blue, double=blue, double distance=2\pgflinewidth](0,3pt) -- (0,-3pt); 
    % \draw[draw, red, double=red, double distance=2\pgflinewidth](3,3pt) -- (3,-3pt); 
    \draw[draw, red, double=red, double distance=2\pgflinewidth](3.2,3pt) -- (3.2,-3pt); 
    \draw[draw, red, double=red, double distance=2\pgflinewidth, opacity=.8](2.5,0) -- (3,0); 
    \draw[draw, blue, double=blue, double distance=2\pgflinewidth, opacity=.8](6,0) -- (6.5,0); 
    \draw (3,3pt) -- (3,-3pt);
    \draw (0,3pt) -- (0,-3pt);
    \draw[densely dotted] (2.5,4pt) -- (2.5,-4pt);
    \draw[densely dotted] (6,  4pt) -- (6,  -4pt);
    \draw[densely dotted] (6.5,4pt) -- (6.5,-4pt);

\draw [decorate,decoration={brace,amplitude=2pt,raise=2ex}]
  (2.5,0) -- (3.2,0) node[midway,yshift=2em]{$\Omega_{b}$};
  % \draw [decorate,decoration={brace,amplitude=2pt,raise=2ex}]
  % (6,0) -- (6.5,0) node[midway,yshift=2em]{$\Omega_{\interactionGiven}$};
  % \draw[decorate, ultra thick] ($(C.north west)+(2ex,0pt)$) -- 
  %  ($(C.north east)-(9ex,0pt)$) node[above=3pt,midway] {some text};
\draw [decorate,decoration={brace,amplitude=5pt,mirror,raise=2ex}]
  (0,0) -- (3.2,0) node[midway,yshift=-2em]{$\Omega_\local$};
  \draw [decorate,decoration={brace,amplitude=5pt,mirror,raise=2ex}]
  (3,0) -- (6,0) node[midway,yshift=-2em]{$\Omega_\nonlocal$};

\end{tikzpicture}
  \caption{Figure illustrating the domains for the application of maximum principle in the strengthened Cauchy-Schwarz proof. 
   The red highlights correspond to $\Omega_{\interactionSolve}$ and $\localBoundSolve$ while the blue color indicates $\Omega_{\interactionGiven}$ and $\localBoundGiven$.
   Instead of the figures such as \cref{fig:simple-domain}, the $\mathcal O(h)$ overlap is explicitly drawn to show how one can use the maximum principles. 
   }
    \label{fig:max}
\end{figure}
With the continuous case proven, the discrete case follows:
\begin{lemma}[Discrete Strengthened Cauchy-Schwarz inequality]\label{lem:strong-cs-discrete}
  % Let \(\theta_{\local}^{h}\in \operatorname{trace}_{\Omega_\localBoundSolve}V_{h, \local}\) and \(\theta_{\nonlocal}^{h}\in \operatorname{trace}_{\Omega_\interactionSolve}V_{h, \nonlocal}\).
    Let \(\vec \theta_{\local} \in \Theta_\local\) and \(\vec \theta_{\nonlocal} \in \Theta_\nonlocal\).
  Then there exists a constant \(0<\overline{c}<1\) and $\bar{h}$ such that for all meshes with discretizations parameters $h \le \bar h$,
  \begin{align*}
    \abs{\left(\vec v_{\local},\vec v_{\nonlocal}\right)_{L^{2}(\Omega_{b})}} &\leq \overline{c} \norm{\vec v_{\local}}_{L^{2}(\Omega_{b})} \norm{\vec v_{\nonlocal}}_{L^{2}(\Omega_{b})}.
  \end{align*}
\end{lemma}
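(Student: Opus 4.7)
The plan is to reduce the discrete statement to the continuous strengthened Cauchy--Schwarz inequality that has already been established, by a compactness/contradiction argument. Assume the conclusion fails. Then there exist a sequence of mesh sizes $h_n \to 0$ and discrete boundary data $\vec\theta_\local^n \in \Theta_\local^{h_n}$, $\vec\theta_\nonlocal^n \in \Theta_\nonlocal^{h_n}$ such that the associated discrete harmonic extensions $\vec v_\local^n$, $\vec v_\nonlocal^n$, normalized to satisfy $\|\vec v_\local^n\|_{L^2(\Omega_b)} = \|\vec v_\nonlocal^n\|_{L^2(\Omega_b)} = 1$, obey
\begin{equation*}
  \bigl|(\vec v_\local^n, \vec v_\nonlocal^n)_{L^2(\Omega_b)}\bigr| \;\longrightarrow\; 1.
\end{equation*}
The goal is to extract limits $v_\local^*$, $v_\nonlocal^*$ that are continuous harmonic extensions in the local and nonlocal sense respectively, share the equality case in Cauchy--Schwarz on $\Omega_b$, and hence by the previous lemma must both vanish, contradicting the unit normalization.

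To carry this out, I would first establish uniform energy bounds on $\vec v_\local^n$ in $H^1(\Omega_\local)$ and on $\vec v_\nonlocal^n$ in $V(\Omega_\nonlocal\cup\Omega_\interactionSolve\cup\Omega_\interactionGiven)$. Since the overlap $\Omega_b$ is a region of positive measure that is bounded away from $\partial\Omega_\local$ and from $\Omega_\interactionGiven$ by the construction in Section 2.2, interior elliptic regularity for discrete harmonic finite element functions (local side) together with the analogous interior Caccioppoli-type bound for nonlocal discrete-harmonic functions (nonlocal side) allow one to bootstrap the $L^2(\Omega_b)=1$ normalization into energy bounds on a slightly smaller interior strip, and then globally via the energy-minimizing property of the discrete harmonic extensions combined with trace and Poincar\'e-type inequalities on the boundary data. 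Rellich compactness then yields subsequences converging strongly in $L^2(\Omega_b)$ to limits $v_\local^*$, $v_\nonlocal^*$ with unit $L^2(\Omega_b)$ norm and inner product $\pm 1$.

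Next, I would verify that the limits are genuine continuous harmonic extensions. For the local side this is the standard passage from Galerkin discrete harmonicity to the continuous Laplace equation on $\Omega_\local$ under weak $H^1$ convergence, yielding $-\Delta v_\local^* = 0$ on $\Omega_\local$ with a well-defined trace $\theta_\local^* \in H^{1/2}_0(\localBoundSolve)$. On the nonlocal side, one uses the bilinear form $\tfrac12\iint(u(\vec x)-u(\vec y))(\varphi(\vec x)-\varphi(\vec y))\gamma\,d\vec x\,d\vec y$ with fixed smooth test functions $\varphi$ vanishing on $\Omega_\interactionSolve\cup\Omega_\interactionGiven$, and passes to the limit using the weak convergence in the energy space $V$. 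Together with the boundary conditions inherited in the trace sense, $(v_\local^*, v_\nonlocal^*)$ are continuous harmonic extensions of admissible trace data. Since $(v_\local^*, v_\nonlocal^*)_{L^2(\Omega_b)} = \pm \|v_\local^*\|\|v_\nonlocal^*\|$, the continuous Cauchy--Schwarz equality case forces $v_\local^* = \pm v_\nonlocal^*$ on $\Omega_b$, and the previously established continuous strengthened inequality then forces both to vanish on $\Omega_b$, contradicting the unit normalization.

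The main obstacle is the uniform energy control: deducing $H^1$ (resp.\ $V$) boundedness of the discrete harmonic extensions from only the $L^2(\Omega_b)$ normalization. Unlike the continuous setting, one cannot directly invoke elliptic regularity up to the portion of $\partial\Omega_\local$ carrying the data, so the argument must exploit that $\Omega_b$ has a uniform positive distance $\mathcal{O}(1)$ (not $\mathcal{O}(h)$) from $\partial\Omega_\interactionSolve$ on one side and $\partial\Omega_{\localBoundGiven}$ on the other, and that the finite element spaces satisfy uniform trace inverse estimates under the quasi-uniformity assumption on the meshes. Once this is available, the compactness-plus-limit passage above is essentially routine, and the choice of $\bar c$ and $\bar h$ is then extracted from the contradiction argument.
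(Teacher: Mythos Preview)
Your overall plan—argue by contradiction, extract a subsequence as $h_n\to 0$, and reduce to the continuous strengthened inequality—is the standard route and is precisely what the paper is invoking by deferring to \cite[Lemma~A.1]{DEliaBochev2021_FormulationAnalysisComputationOptimization}. In that sense the approaches coincide.

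There is, however, a concrete gap in your justification of the compactness step, caused by a misreading of the geometry of $\Omega_b$. You assert that $\Omega_b$ is ``bounded away from $\partial\Omega_\local$'' and sits ``at uniform positive distance $\mathcal{O}(1)$ from $\partial\Omega_{\interactionSolve}$''. Neither holds in the splice construction: since $\Omega_b\subset\Omega_\local$, the interface $\Gamma=\partial\Omega_\local\cap\Omega$ lies on $\partial\Omega_b$, and $\Omega_{\interactionSolve}$ is in fact \emph{contained} in $\Omega_b$. In the 1D model $\Omega_\local=(-1,0)$, $\Omega_\nonlocal=(-h,1)$ one has $\Omega_b=(-h-\delta,0)$, $\Gamma=\{0\}\subset\partial\Omega_b$, and $\Omega_{\interactionSolve}=(-h-\delta,-h)\subset\Omega_b$ (compare Figure~\ref{fig:max}). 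Consequently you cannot invoke interior Caccioppoli/elliptic estimates on $\Omega_b$ to control $\vec v_\local^n$ near $\Gamma$, where its Dirichlet data live. Indeed, taking $\vec\theta_\local^n$ to oscillate at the mesh scale on $\Gamma$ produces discrete harmonic extensions with $\|\vec v_\local^n\|_{L^2(\Omega_b)}=1$ but $\|\vec v_\local^n\|_{H^1(\Omega_\local)}\to\infty$, so the uniform energy bound you want simply fails along that sequence.

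The rescue is not through energy control on each factor separately, but through the \emph{pair}: one must argue that such boundary-layer sequences cannot make $(\vec v_\local^n,\vec v_\nonlocal^n)_{L^2(\Omega_b)}$ approach the product of the norms, because $\vec v_\nonlocal^n$ is (discretely) nonlocal-harmonic throughout an $\mathcal{O}(\delta)$ neighborhood of $\Gamma$ (its data live on $\Omega_{\interactionSolve}$, a fixed distance away) and therefore cannot replicate oscillations of $\vec v_\local^n$ concentrated in an $\mathcal{O}(h_n)$ layer at $\Gamma$. You should also explicitly track the mild $h$-dependence of $\Omega_\nonlocal$ and $\Omega_b$ when passing to the limit; the limiting domain is $h$-independent, but the functions along the sequence live on moving domains and the continuous lemma as stated here is itself posed on the $h$-dependent $\Omega_b$ with $\theta_\local\in\Theta_\local$.
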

\begin{proof}
  The proof follows from \cite[Lemma A.1]{DEliaBochev2021_FormulationAnalysisComputationOptimization}. 
\end{proof}

We can finally approach the proof of the theorem, which is broken down into three steps: 
\begin{proof}[Proof of \cref{thm:main-theorem}]
We proceed with the proof in several steps. 
First, by \cref{lem:diff-on-overlap}, at any minimum of $\mathcal J$, if any exists, that $\vec u_\nonlocal = \vec u_\local$ on $\Omega_b$.
Furthermore by \cref{lem:equivalent}, that any solution to the optimization problem is equivalent to the splice method and hence related to the the square matrix $\matr A^S$. 
Finally, by \cref{lem:existence-uniqueness} shows that $\matr A^S$ has a null space, meaning there exists an unique solution to the optimization problem. 
\end{proof}

\begin{lemma}\label{lem:diff-on-overlap}
  If $(\vec{\theta}^*_\nonlocal, \vec{\theta}^*_\local)\in \Theta_{\nonlocal}\times\Theta_{\local}$ is a minimizer of \eqref{eqn:discrete-opt}, then $\mathcal J(\vec{\theta}^*_\local, \vec{\theta}^*_\nonlocal) = 0$.
\end{lemma}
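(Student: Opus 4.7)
The plan is to use first-order optimality together with the strengthened Cauchy-Schwarz inequality of \cref{lem:strong-cs-discrete} to show that the residual $\vec z^{*}:=\vec u_{\nonlocal}(\vec\theta_{\nonlocal}^{*})-\vec u_{\local}(\vec\theta_{\local}^{*})$ vanishes in $L^{2}(\Omega_{b})$, from which $\mathcal J(\vec\theta_{\local}^{*},\vec\theta_{\nonlocal}^{*})=\frac12\|\vec z^{*}\|^{2}_{L^{2}(\Omega_{b})}=0$ is immediate.

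First, I would exploit the affine decompositions $\vec u_{\local}=\vec v_{\local}(\vec\theta_{\local})+\vec w_{\local}$ and $\vec u_{\nonlocal}=\vec v_{\nonlocal}(\vec\theta_{\nonlocal})+\vec w_{\nonlocal}$ from the preamble to the theorem to recognise $\mathcal J$ as a non-negative convex quadratic in $(\vec\theta_{\local},\vec\theta_{\nonlocal})$. Setting its gradient to zero at the minimizer yields the orthogonality relations
\begin{align*}
  (\vec z^{*},\vec v_{\local}(\vec\eta_{\local}))_{L^{2}(\Omega_{b})} &= 0 \quad\text{for all } \vec\eta_{\local}\in\Theta_{\local},\\
  (\vec z^{*},\vec v_{\nonlocal}(\vec\eta_{\nonlocal}))_{L^{2}(\Omega_{b})} &= 0 \quad\text{for all } \vec\eta_{\nonlocal}\in\Theta_{\nonlocal}.
\end{align*}
Writing $V_{\local}:=\{\vec v_{\local}(\vec\eta_{\local}):\vec\eta_{\local}\in\Theta_{\local}\}$ and likewise $V_{\nonlocal}$, this states that $\vec z^{*}\perp (V_{\local}+V_{\nonlocal})$ in $L^{2}(\Omega_{b})$.

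Next, I would observe that $\vec z^{*}$ itself differs from a member of $V_{\local}+V_{\nonlocal}$ by the fixed offset $\vec w:=\vec w_{\nonlocal}-\vec w_{\local}$, since $\vec z^{*}=(\vec v_{\nonlocal}(\vec\theta_{\nonlocal}^{*})-\vec v_{\local}(\vec\theta_{\local}^{*}))+\vec w$. If one can establish the inclusion $\vec w\in V_{\local}+V_{\nonlocal}$ on $\Omega_{b}$, then $\vec z^{*}\in V_{\local}+V_{\nonlocal}$ as well, and combining with the orthogonality above gives $\|\vec z^{*}\|^{2}_{L^{2}(\Omega_{b})}=0$.

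The main obstacle is verifying this inclusion, and this is where \cref{lem:strong-cs-discrete} plays the decisive role. The strict bound $\overline c<1$ forces $V_{\local}\cap V_{\nonlocal}=\{0\}$ in $L^{2}(\Omega_{b})$, so that $\dim(V_{\local}+V_{\nonlocal})=\dim V_{\local}+\dim V_{\nonlocal}=|\is[\localBoundSolve]|+|\is[\interactionSolve]|$. A direct count of finite element dofs located in $\overline{\Omega_{b}}$, using the $\mathcal O(h)$ construction of the overlap from \Cref{sec:discr-using-finite}, should match this number with the dimension of the ambient finite element subspace of $L^{2}(\Omega_{b})$. This makes $V_{\local}+V_{\nonlocal}$ the full FE subspace, so that the inclusion $\vec w\in V_{\local}+V_{\nonlocal}$ is automatic. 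I expect the bookkeeping around this dimension count (especially in the $\mathbb{P}_{1}$--$\mathbb{P}_{1}$ setting where $\is[\localBoundSolve]\subset\is[\nonlocal]$ and $\is[\interactionSolve]\subset\is[\local]$ intertwine with the overlap region) to be the most delicate part of the proof.
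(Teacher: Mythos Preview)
Your approach is correct and coincides with the paper's argument; both combine first-order optimality with \cref{lem:strong-cs-discrete} to force the residual to vanish. The paper writes the gradient as $\vec r^{T}\vec M\,\vec J$ with $\vec r=[\vec u_{\nonlocal}-\vec\theta_{\local}^{*};\,\vec u_{\local}-\vec\theta_{\nonlocal}^{*}]$ and a square Jacobian $\vec J$, then shows $\vec J$ has trivial null space via the strengthened Cauchy--Schwarz inequality, so $\vec r=0$. Your $V_{\local}\cap V_{\nonlocal}=\{0\}$ is exactly the statement that $\vec J$ has trivial null space (its kernel encodes $\vec v_{\local}=\vec v_{\nonlocal}$ on $\Omega_{b}$), and your dimension count is what the paper assumes implicitly when it represents the $L^{2}(\Omega_{b})$ residual by a vector indexed by $\is[\localBoundSolve]\cup\is[\interactionSolve]$, i.e.\ when it treats $\vec J$ as square; the two presentations are equivalent.
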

\begin{proof}
    Note that $\vec u_L(\vec x_i) = \vec\theta_L$ if $\vec x_i \in \is[\localBoundSolve]$ and $\vec u_N(\vec x_i) = \vec\theta_N$ if $\vec x_i \in \is[\interactionSolve]$ by definition. 
    Thus, we can decompose the summation into the two regions, and rewriting using block matrix notation
    \begin{align*}
      \mathcal{J}\left(
            \vec\theta_L,
            \vec\theta_N 
          \right) &= \frac{1}{2} \int_{\Omega_b} (\vec u_\nonlocal - \vec u_\local)^2 \, dx \\
        &= \frac{1}{2} \begin{bmatrix}
                  \vec u_N(\vec x) - \vec\theta_L \\
       \vec u_L(\vec x) - \vec\theta_N 
   \end{bmatrix}^T \vec M \begin{bmatrix}
                    \vec u_N(\vec x) - \vec\theta_L \\
        \vec u_L(\vec x) - \vec\theta_N 
   \end{bmatrix}
    \end{align*}
    where $\vec M$ is the usual positive definite mass matrix on $\Omega_b$.
    It's implicit the values of $\vec x$ of the top block is over the dofs from $\is[\localBoundSolve]$ and $\is[\interactionSolve]$ for the bottom. 

    Calculating the gradient by using \cref{eqn:ul-opt,eqn:un-opt}, we see that at any minima $(\vec \theta_\local^*, \vec \theta_\nonlocal^*)$
    \begin{align*}
        \nabla \mathcal J \left(
            \vec \theta_L^* ,
            \vec \theta_N^*
        \right) &= 
            \begin{bmatrix}
                     \vec u_N- \vec\theta_L^* \\
       \vec u_L - \vec\theta_N^* 
   \end{bmatrix}^T \vec M \begin{bmatrix}
       -\matr I_\Gamma &  \nabla_{\vec \theta_N} \vec u_N(\vec x) \\
      \nabla_{\vec \theta_L} \vec u_L(\vec x) & -\matr I_I
   \end{bmatrix} \\
   &=  \begin{bmatrix}
                     \vec u_N - \vec\theta_L^* \\
       \vec u_L - \vec\theta_N^* 
   \end{bmatrix}^T \vec M  \begin{bmatrix}
       -\matr I_\Gamma & -\restr[\localBoundSolve]\restr[\nonlocal]^T \lapl[\nonlocal, \nonlocal]^{-1} \lapl[\nonlocal, \interactionSolve] \\
       -\restr[\interactionSolve] \restr[\local]^T\lapl[\local, \local]^{-1} \lapl[\local, \localBoundSolve]  & -\matr I_I
   \end{bmatrix}.
    \end{align*}
    where $\matr I_\Gamma, \matr I_I$ are identity matrices of dimensions $\is[\localBoundSolve], \is[\interactionSolve]$, and the terms $\restr[\localBoundSolve]\restr[\nonlocal]^T$, $\restr[\interactionSolve] \restr[\local]^T$ are restricting to the rows corresponding to $\is[\localBoundSolve], \is[\interactionSolve]$ respectively (seeing as our objective function only evaluates at those points).
    % where we redifine $\matr R_\Gamma, \matr R_I$ are restriction matrices from $\mathcal{I}_N \to \mathcal{I}_\Gamma, \mathcal{I}_L \to \mathcal{I}_I$ respectively and

    At the minimum, the gradient is zero. 
    If we assume that the first vector is non-zero, then it necessary that the matrix has a nontrivial null-space.
    Suppose that $[\vec{\bar \theta}_L; \vec{\bar \theta}_N]$ is part of the null-space, i.e.
    \begin{align*}
           -\vec{\bar \theta}_L-\restr[\localBoundSolve] \restr[\nonlocal]^T \lapl[\nonlocal, \nonlocal]^{-1} \lapl[\nonlocal, \interactionSolve] \vec{\bar \theta}_N &= \vec 0 \\
         -\restr[\interactionSolve] \restr[\local]^T\lapl[\local, \local]^{-1} \lapl[\local, \localBoundSolve] \vec{\bar \theta}_L  - \vec{\bar \theta}_N&= \vec 0.
    \end{align*}
    Examining \cref{eqn:un-opt,eqn:ul-opt}, we see that the above correspond to the minimization problem with $f = 0$ and homogeneous boundary conditions of $\vec g = 0$.
    However, by \cref{lem:strong-cs-discrete}, this is only possible iff $\vec{\bar \theta}_N = \vec{\bar \theta}_L = 0$, hence contradiction. 
\end{proof}

\begin{remark}
  In order to show the well-posedness for \(\mathbb{P}_{1}-\mathbb{P}_{0}\) coupling, Lemma~\ref{lem:diff-on-overlap} would need to include the case of non-matching discretizations.
\end{remark}

Let $\theta^*_\nonlocal$, $\theta^*_\local$ be any minimizer (possibly not unique) of $\mathcal J$ and let $\vec u_N^* := \vec u_N(\vec\theta_N^*)$, $\vec u_L^* := \vec u_L(\vec\theta_L^*)$ be the solution at the minimizer.
The above lemma implies that $\vec u_N^* = \vec u_L^*$ where they are both defined, meaning we can define $\vec u^O$ with the superscript $\cdot^O$ denoting the optimization solution. 
We can show that $\vec u^O$ corresponds to the spliced solution:
\begin{lemma}\label{lem:equivalent}
    We have that for any minima $\vec u^O$, then $\vec u^S := \vec u^O$ is also a solution to the splice equation and vice versa.
\end{lemma}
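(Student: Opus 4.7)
The plan is to exhibit an explicit bijection between the minimizers of the discrete optimization problem \eqref{eqn:discrete-opt} and the solutions of the splice linear system, relying on the already-established identity $\vec{u}_\nonlocal^* = \vec{u}_\local^*$ on $\Omega_{b}$ furnished by \cref{lem:diff-on-overlap}.

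For the forward direction (optimization $\Rightarrow$ splice), I would start from a minimizer $(\vec{\theta}_\local^*, \vec{\theta}_\nonlocal^*)$ with associated subdomain solutions $\vec{u}_\local^*, \vec{u}_\nonlocal^*$. Because \cref{lem:diff-on-overlap} guarantees the two subdomain solutions agree at every overlap dof, assembling a single vector $\vec{u}^O \in \mathbb{R}^{n}$ from $\vec{u}_\local^*$ on $\is[\local]$ and $\vec{u}_\nonlocal^*$ on $\is[\nonlocal]$ is well-defined. Using the convention $\is[\localBoundSolve]\subset\is[\nonlocal]$ and $\is[\interactionSolve]\subset\is[\local]$ from \cref{fig:dofs}, the values $\restr[\localBoundSolve]\vec{u}^O$ and $\restr[\interactionSolve]\vec{u}^O$ recover precisely the controls $\vec{\theta}_\local^*$ and $\vec{\theta}_\nonlocal^*$, respectively. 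Substituting these identifications into the block structure of $\matr{A}^S$ from \eqref{eqn:splice-operator}, the local rows reproduce \eqref{eqn:ul-opt} (with the $\lapl[\local,\localBoundGiven]\vec{g}_{\localBoundGiven}$ term sent to the right-hand side), and the nonlocal rows reproduce \eqref{eqn:un-opt}, so by construction $\matr{A}^S \vec{u}^O = \vec{h}^S$ as in \eqref{eqn:splice-rhs}.

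For the reverse direction (splice $\Rightarrow$ optimization), I would start from a solution $\vec{u}^S$ of $\matr{A}^S\vec{u}^S = \vec{h}^S$ and define candidate controls $\vec{\theta}_\local := \restr[\localBoundSolve]\vec{u}^S$ and $\vec{\theta}_\nonlocal := \restr[\interactionSolve]\vec{u}^S$. The top (local) block of the splice equation, together with this choice of $\vec{\theta}_\local$ on $\is[\localBoundSolve]$, is exactly the linear system defining $\vec{u}_\local(\vec{\theta}_\local)$ via \eqref{eqn:ul-opt}; invertibility of the SPD local stiffness $\lapl[\local,\local]$ then forces $\vec{u}_\local(\vec{\theta}_\local) = \restr[\local]\vec{u}^S$ on the interior and, by construction, $\vec{\theta}_\local$ on the interior boundary. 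An analogous argument on the nonlocal rows, using well-posedness of the nonlocal subproblem, gives $\vec{u}_\nonlocal(\vec{\theta}_\nonlocal) = \restr[\nonlocal]\vec{u}^S$ on the interior and $\vec{\theta}_\nonlocal$ on $\is[\interactionSolve]$. Both subdomain solutions thus agree on every overlap dof (they both take the value $\vec{u}^S$ there), giving $\mathcal{J}(\vec{\theta}_\local,\vec{\theta}_\nonlocal) = 0$, which is the global minimum since $\mathcal{J}\geq 0$.

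The main obstacle is the careful bookkeeping of restriction/prolongation operators together with the disjoint-partition identity $[\restr[\local];\restr[\nonlocal]]=\matr{I}$, and in particular the asymmetric conventions $\is[\localBoundSolve]\subset\is[\nonlocal]$ and $\is[\interactionSolve]\subset\is[\local]$: the overlap dofs belong to exactly one subdomain's index set yet serve as essential boundary data for the \emph{opposite} subdomain's subproblem. Keeping track of which entry of $\vec{u}^S$ plays the role of which control is what makes the row-by-row identification of $\matr{A}^S\vec{u}^O = \vec{h}^S$ with the pair \eqref{eqn:ul-opt}--\eqref{eqn:un-opt} precise, and is where one must take care to avoid a circular argument when identifying the splice equation with the KKT conditions of the optimization problem.
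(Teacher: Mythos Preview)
Your proposal is correct and follows essentially the same route as the paper: identify the subproblem equations \eqref{eqn:ul-opt}--\eqref{eqn:un-opt} with the local and nonlocal row blocks of $\matr{A}^{S}$, using the overlap agreement from \cref{lem:diff-on-overlap} so that $\restr[\localBoundSolve]\vec{u}^{O}=\vec{\theta}_{\local}^{*}$ and $\restr[\interactionSolve]\vec{u}^{O}=\vec{\theta}_{\nonlocal}^{*}$. Your write-up is in fact more complete than the paper's, which only spells out the forward direction and takes $\vec{g}=0$; your explicit treatment of the reverse implication (defining controls from a splice solution and invoking invertibility of $\lapl[\local,\local]$ and $\lapl[\nonlocal,\nonlocal]$ to recover the subdomain solutions, hence $\mathcal{J}=0$) is a welcome addition.
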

\begin{proof}
    For simplicity, let $\vec g = 0$ where we have a homogeneous, zero Dirichlet boundary condition. 
    % We can easily verify the solution $\vec u^O$ satisfies the splice equations and vice versa. 
    Given \cref{eqn:un-opt,eqn:ul-opt}, we have that
    \begin{align*}
    % \matr R_{N} \matr A_{NN} \matr R_{N}^T \vec u_{O} + \matr R_{N} \matr A_{NI} \matr R_I^T \vec u_{O} = \vec f.
        \lapl[\nonlocal, \nonlocal] \restr[\nonlocal] \vec u^{O} + \lapl[\nonlocal, (\interactionSolve)] \restr[\interactionSolve] \vec u^{O} &= \vec f_\nonlocal \\
        \lapl[\local, \local] \restr[\local] \vec u^{O} + \lapl[\local, \localBoundSolve] \restr[\localBoundSolve] \vec u^{O} &=  \vec f_\local.
    \end{align*}
    Thus, using \cref{eqn:splice-operator}
        \begin{align*}
        \matr A^S \vec u^O &=
        \restr[\local]^T \begin{bmatrix}
        \lapl[\local, \local] & \lapl[\local, \localBoundSolve] 
      \end{bmatrix} 
      \begin{bmatrix}
        \restr[\local] \\
        \restr[\localBoundSolve]
    \end{bmatrix}\vec u^O + 
        \restr[\nonlocal]^T \begin{bmatrix}
        \lapl[\nonlocal, \nonlocal] & \lapl[\nonlocal, \interactionSolve]
    \end{bmatrix}  \begin{bmatrix}
        \restr[\nonlocal] \\
        \restr[\interactionSolve]
    \end{bmatrix}\vec u^O \\
     &= \restr[\local]^T
        (\lapl[\local, \local]\restr[\local] + \lapl[\local, \localBoundSolve] \restr[\localBoundSolve])\vec u^O + 
        \restr[\nonlocal]^T
        (\lapl[\nonlocal, \nonlocal] \restr[\nonlocal] + \lapl[\nonlocal, (\interactionSolve)]\restr[\interactionSolve])\vec u^O \\
        &= \restr[\local]^T \vec f_\local + \restr[\nonlocal]^T \vec f_\nonlocal
    \end{align*}
    meaning that the optimization with $\mathcal{O}(h)$ overlap is equivalent to the splicing method. 
\end{proof}
\begin{lemma}\label{lem:existence-uniqueness}
  The splice coupling has a unique solution.
\end{lemma}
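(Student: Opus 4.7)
The plan is to show that $\matr A^{S}$ has trivial kernel; since it is a square matrix on $\mathbb{R}^{n}$, this immediately yields both existence and uniqueness of the splice solution. Take $\vec u \in \ker \matr A^{S}$ and decompose it into its local and nonlocal blocks $\vec u_{\local}$, $\vec u_{\nonlocal}$. Reading off the rows of $\matr A^{S}\vec u = 0$, the pair $(\vec u_{\local}, \vec u_{\nonlocal})$ satisfies the splice equations with vanishing data $\vec f = 0$, $\vec g_{\localBoundGiven} = 0$, $\vec g_{\interactionGiven} = 0$. The discrete harmonic/forcing decomposition from the proof of \Cref{thm:main-theorem} then gives $\vec w_{\local} = \vec w_{\nonlocal} = 0$ and $\vec u_{\local} = \vec v_{\local}(\vec\theta_{\local})$, $\vec u_{\nonlocal} = \vec v_{\nonlocal}(\vec\theta_{\nonlocal})$ with $\vec\theta_{\local} := \restr[\localBoundSolve]\vec u \in \Theta_{\local}$ and $\vec\theta_{\nonlocal} := \restr[\interactionSolve]\vec u \in \Theta_{\nonlocal}$.

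The crux is the observation that under the matching $\mathbb{P}_{1}$--$\mathbb{P}_{1}$ mesh construction of \Cref{sec:discr-using-finite}, every vertex of the common underlying mesh lying in $\overline{\Omega_{b}}$ belongs to $\is[\localBoundSolve] \cup \is[\interactionSolve]$: the vertices on $\localBoundSolve$ make up $\is[\localBoundSolve]$, while those in the remainder of $\overline{\Omega_{b}}$ (including the inner boundary of the one-element overlap strip) make up $\is[\interactionSolve]$. Because $\vec u$ is a single vector in the global dof ordering, the two $\mathbb{P}_{1}$ functions $\mathfrak v_{\local}$ and $\mathfrak v_{\nonlocal}$ are built from identical nodal values at every vertex in $\overline{\Omega_{b}}$, so $\mathfrak v_{\local} \equiv \mathfrak v_{\nonlocal}$ pointwise on $\Omega_{b}$.

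I would then invoke the discrete strengthened Cauchy--Schwarz inequality (\Cref{lem:strong-cs-discrete}): for $h$ small enough there is $\overline{c} < 1$ such that
\begin{align*}
  \norm{\vec v_{\local}}_{L^{2}(\Omega_{b})}^{2}
  = (\vec v_{\local}, \vec v_{\nonlocal})_{L^{2}(\Omega_{b})}
  \le \overline{c}\, \norm{\vec v_{\local}}_{L^{2}(\Omega_{b})} \norm{\vec v_{\nonlocal}}_{L^{2}(\Omega_{b})}
  = \overline{c}\, \norm{\vec v_{\local}}_{L^{2}(\Omega_{b})}^{2},
\end{align*}
where the outer equalities use $\vec v_{\local} = \vec v_{\nonlocal}$ on $\Omega_{b}$. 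Since $\overline{c} < 1$ this forces $\vec v_{\local} \equiv \vec v_{\nonlocal} \equiv 0$ on $\Omega_{b}$. Continuity of $\mathbb{P}_{1}$ functions extends the identity to $\overline{\Omega_{b}} \supset \localBoundSolve \cup \overline{\Omega_{\interactionSolve}}$, so $\vec\theta_{\local} = \vec v_{\local}|_{\localBoundSolve} = 0$ and $\vec\theta_{\nonlocal} = \vec v_{\nonlocal}|_{\Omega_{\interactionSolve}} = 0$. The blocks $\lapl[\local,\local]$ and $\lapl[\nonlocal,\nonlocal]$ are the standard invertible stiffness matrices of fully-constrained local Poisson and nonlocal diffusion problems, so the discrete harmonic extensions of zero data are themselves zero, giving $\vec u = 0$.

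The main obstacle is the second step: lifting the node-wise splice consistency to the pointwise equality $\mathfrak v_{\local} \equiv \mathfrak v_{\nonlocal}$ throughout $\Omega_{b}$. This relies on the matching-mesh construction guaranteeing that no dof sits in the interior of $\Omega_{b}$ outside $\is[\localBoundSolve] \cup \is[\interactionSolve]$; the failure of this property is precisely what prevents the argument from carrying over verbatim to the $\mathbb{P}_{0}$--$\mathbb{P}_{1}$ coupling of \Cref{sec:p0-section}, as the authors note after \Cref{lem:diff-on-overlap}.
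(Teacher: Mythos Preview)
Your proof is correct and follows essentially the same route as the paper's: both reduce to showing $\ker\matr A^{S}=\{0\}$ via the discrete strengthened Cauchy--Schwarz inequality applied to the zero-data harmonic extensions $\vec v_{\local},\vec v_{\nonlocal}$. You are more explicit than the paper in arguing that $\mathfrak v_{\local}=\mathfrak v_{\nonlocal}$ on $\Omega_{b}$ (the paper packages this implicitly as $\mathcal J=0$ and then bounds $\mathcal J\ge(1-\overline c)\bigl(\norm{\vec v_{\local}}^{2}+\norm{\vec v_{\nonlocal}}^{2}\bigr)$), but the two algebraic manipulations are equivalent.
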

\begin{proof}
  We need to show that the nullspace of \(\matr{A}^{S}\) only consists of the zero vector.

  Using the strengthened Cauchy-Schwarz inequality of \cref{lem:strong-cs-discrete}, we obtain
  \begin{align*}
    \mathcal{J}(\vec \theta_{\local},\vec  \theta_{\nonlocal}) &= \norm{\vec u_{\local}}_{L^{2}(\Omega_{b})}^{2} + \norm{\vec u_{\nonlocal}}_{L^{2}(\Omega_{b})}^{2} - 2\left(\vec u_{\local},\vec u_{\nonlocal}\right)_{L^{2}(\Omega_{b})} \\
    &\geq \norm{\vec u_{\local}}_{L^{2}(\Omega_{b})}^{2} + \norm{\vec u_{\nonlocal}}_{L^{2}(\Omega_{b})}^{2} - 2c\norm{\vec u_{\local}}_{L^{2}(\Omega_{b})} \norm{\vec u_{\nonlocal}}_{L^{2}(\Omega_{b})} \\
    &\geq (1-c) \left(\norm{\vec u_{\local}}_{L^{2}(\Omega_{b})}^{2} + \norm{\vec u_{\nonlocal}}_{L^{2}(\Omega_{b})}^{2}\right).
  \end{align*}
  Therefore \(\mathcal{J}=0\) implies that \(\vec u_{\local} \) and \(\vec u_{\nonlocal} \) are zero on \(\Omega_{b}\) and in turn that \(\vec \theta_{\local}\) and \(\vec \theta_{\nonlocal}\) are zero.
\end{proof}

\bibliography{sources.bib}{}

\begin{thebibliography}{10}

\bibitem{ciarlet2002finite}
Philippe~G Ciarlet.
\newblock {\em The finite element method for elliptic problems}.
\newblock SIAM, 2002.

\bibitem{DuGunzburgerEtAl2012_AnalysisApproximationNonlocalDiffusion}
Qiang Du, Max Gunzburger, Richard~B Lehoucq, and Kun Zhou.
\newblock Analysis and approximation of nonlocal diffusion problems with volume
  constraints.
\newblock {\em SIAM review}, 54(4):667--696, 2012.

\bibitem{DEliaBochev2021_FormulationAnalysisComputationOptimization}
Marta D’Elia and Pavel Bochev.
\newblock Formulation, analysis and computation of an optimization-based
  local-to-nonlocal coupling method.
\newblock {\em Results in Applied Mathematics}, 9:100129, 2021.

\bibitem{DEliaDuEtAl2020_NumericalMethodsNonlocalFractionalModels}
Marta D’Elia, Qiang Du, Christian Glusa, Max Gunzburger, Xiaochuan Tian, and
  Zhi Zhou.
\newblock Numerical methods for nonlocal and fractional models.
\newblock {\em Acta Numerica}, 29:1--124, 2020.

\bibitem{d2021review}
Marta D’Elia, Xingjie Li, Pablo Seleson, Xiaochuan Tian, and Yue Yu.
\newblock A review of local-to-nonlocal coupling methods in nonlocal diffusion
  and nonlocal mechanics.
\newblock {\em Journal of Peridynamics and Nonlocal Modeling}, pages 1--50,
  2021.

\bibitem{d2016coupling}
Marta D’Elia, Mauro Perego, Pavel Bochev, and David Littlewood.
\newblock A coupling strategy for nonlocal and local diffusion models with
  mixed volume constraints and boundary conditions.
\newblock {\em Computers \& Mathematics with Applications}, 71(11):2218--2230,
  2016.

\bibitem{evans2022partial}
Lawrence~C Evans.
\newblock {\em Partial differential equations}, volume~19.
\newblock American Mathematical Society, 2022.

\bibitem{galvanetto2016effective}
Ugo Galvanetto, Teo Mudric, Arman Shojaei, and Mirco Zaccariotto.
\newblock An effective way to couple fem meshes and peridynamics grids for the
  solution of static equilibrium problems.
\newblock {\em Mechanics Research Communications}, 76:41--47, 2016.

\bibitem{GilbargTrudinger2001}
David Gilbarg and Neil~S Trudinger.
\newblock {\em Elliptic Partial Differential Equations of Second Order}, volume
  224.
\newblock Springer Science \& Business Media, 2001.

\bibitem{Glusa2021_PyNucleus}
Christian Glusa.
\newblock {PyNucleus}.
\newblock \url{https://github.com/sandialabs/PyNucleus}, 2021.

\bibitem{JarohsWeth2019_StrongMaximumPrincipleNonlocalOperators}
Sven Jarohs and Tobias Weth.
\newblock On the strong maximum principle for nonlocal operators.
\newblock {\em Mathematische Zeitschrift}, 293:81--111, 2019.

\bibitem{LiLu2017_QuasiNonlocalCouplingNonlocalDiffusions}
Xingjie~Helen Li and Jianfeng Lu.
\newblock Quasi-nonlocal coupling of nonlocal diffusions.
\newblock {\em SIAM Journal on Numerical Analysis}, 55(5):2394--2415, 2017.

\bibitem{shojaei2016coupled}
A~Shojaei, T~Mudric, M~Zaccariotto, and U~Galvanetto.
\newblock A coupled meshless finite point/peridynamic method for 2d dynamic
  fracture analysis.
\newblock {\em International Journal of Mechanical Sciences}, 119:419--431,
  2016.

\bibitem{silling2015variable}
Stewart Silling, David Littlewood, and Pablo Seleson.
\newblock Variable horizon in a peridynamic medium.
\newblock {\em Journal of Mechanics of Materials and Structures},
  10(5):591--612, 2015.

\bibitem{Silling2000_ReformulationElasticityTheoryDiscontinuities}
Stewart~A Silling.
\newblock Reformulation of elasticity theory for discontinuities and long-range
  forces.
\newblock {\em Journal of the Mechanics and Physics of Solids}, 48(1):175--209,
  2000.

\bibitem{TianDu2014_AsymptoticallyCompatibleSchemesApplications}
Xiaochuan Tian and Qiang Du.
\newblock Asymptotically compatible schemes and applications to robust
  discretization of nonlocal models.
\newblock {\em SIAM Journal on Numerical Analysis}, 52(4):1641--1665, 2014.

\end{thebibliography}
\bibliographystyle{plain}

\end{document}